\newtheorem{theorem}{Theorem}[section]
\newtheorem{corollary}[theorem]{Corollary}
\newtheorem{proposition}[theorem]{Proposition}
\newtheorem{lemma}[theorem]{Lemma}
\newtheorem*{theorem*}{Theorem}
\newtheorem*{prop*}{Proposition}
\newtheorem*{cor*}{Corollary}
\theoremstyle{definition}
\newtheorem{definition}[theorem]{Definition}
\newtheorem{example}[theorem]{Example}
\newtheorem{remark}[theorem]{Remark}
\def\mc{\mathcal}
\def\mbb{\mathbb}
\def\End{{\it End}_{\Lambda}}
\def\Sym{{\it Sym}}
\def\Ann{{\it Ann}}
\def\L{\Lambda}
\def\l{\lambda}
\def\D{\Delta}
\def\La{\mc{L}_{\it{mod}}}
\def\Lg{\mc{L}_{\it{group}}}
\def\Lal{\mc{L}_{\it{alg}}}
\def\Lr{\mc{L}_{\it{ring}}}
\def\H10{\text{H10}}
\def\mb{\mathbf}
\begin{document}

\title{
%
Studying the Diophantine problem in finitely generated rings and algebras via bilinear maps
}

\author{Albert Garreta\footnote{Basque Center of Applied Mathematics, Bilbao, Spain}        \and
        Alexei Miasnikov\footnote{Stevens Institute of Technology, NJ, USA}  \and 
		Denis Ovchinnikov\footnote{Stevens Institute of Technology, NJ, USA}  
}

\maketitle

\begin{abstract}

We study systems of polynomial equations in several classes of finitely generated rings and algebras.  For each ring $R$ (or algebra) in one of these classes we obtain an interpretation by systems of equations of  a ring of integers $O$ of a finite field extension of either $\mbb{Q}$ or  $\mbb{F}_p(t)$, for some prime $p$ and variable $t$. This implies that the Diophantine problem (decidability of systems of polynomial equations) in $O$ is Karp-reducible to the same problem in $R$. In several cases  we further obtain an interpretation by systems of equations of the ring $\mbb{F}_p[t]$ in $R$, which implies that the Diophantine problem  in $R$ is undecidable in this case.  Otherwise,  the ring  $O$ is a ring of algebraic integers, and then the long-standing conjecture that $\mathbb{Z}$ is always interpretable by systems of equations in a ring of algebraic integers carries over to $R$. If true, it implies that the Diophantine problem in $R$ is also undecidable. 

Some of the classes of finitely generated rings  studied in this paper are the following: all associative, commutative, non-unitary rings (a similar statement for the unitary case was obtained by Eisentraeger); all possibly non-associative, non-commutative non-unitary rings that are finitely generated as an abelian group; and several classes of finitely generated non-commutative rings. Analogous statements are obtained for algebras over finitely generated associative commutative unitary rings.

Another contribution of the paper is the technique by which the aforementioned results are obtained. More precisely, we show that given a bilinear map $f: A\times B \to C$  between finitely generated abelian groups (or modules), under some mild assumptions, there exists a certain ring (or algebra) $R$ with nice properties which is interpretable by systems of equations in the multi-sorted structure $(A,B,C;f)$. This result fits nicely the study of rings (or  algebras)  since the multiplication operation of such structures can be seen as a bilinear map between abelian groups (or modules). This result is potentially  applicable in many other settings, such as in the area of group theory, see for example \cite{GMO_solvable}.


\end{abstract}

\setcounter{tocdepth}{2}
\tableofcontents

\section{Introduction}\label{s: intro}

In this paper, we study systems of polynomial equations in different classes of rings and algebras. For each $R$ in one of these classes  we  interpret by systems of equations a ring of integers $O$ of a number field or a  global function field (i.e.\ $O$ is the integral closure of $\mbb Z$ or $\mbb F_p[t]$ in a finite extension of $\mbb Q$ or $\mbb F_p(t)$, respectively).  In particular, this reduces the Diophantine problem (decidability of systems of polynomial equations) in $O$ to the same problem in $R$. Hence if $\mc{D}(O)$ is undecidable, then also $\mc{D}(R)$ is undecidable. It is known that $\mc{D}(O)$ is undecidable if  $O$ has positive characteristic \cite{Shla_vertical_def_functions}, and it is conjectured to be also undecidable if otherwise $O$ is a ring of algebraic integers \cite{Phe_Zah, Denef_conjecture}. 

A \emph{number field} is a finite field extension of $\mbb Q$. A \emph{global function field} is a finite extension of $\mbb F_p(t)$, for some prime $p$. A ring of integers of a number field  is called a \emph{ring of algebraic integers}.

The Diophantine problem in a structure $R$, denoted $\mc{D}(R)$, asks whether there exists an algorithm that, given a \emph{system} of equations $S$ with coefficients in $R$, determines if $S$ has a solution in $R$ or not. 
The original modern version  of the Diophantine problem (also called Hilbert's Tenth Problem or generalized Hilbert's Tenth Problem) was posed by Hilbert for the ring of integers $\mbb Z$. This was solved in the negative in 1970 by Matiyasevich \cite{mat} building on the work of Davis, Putnam, and Robinson \cite{DPR}. Subsequently the same problem has been studied in a wide variety of rings, most notably in $\mbb Q$ and in rings of algebraic integers $O$,
where it remains widely open. As mentioned above, a long-standing conjecture   \cite{Denef_conjecture, Phe_Zah} states that $\mbb Z$ is Diophantine in any such $O$  (and thus  $\mc{D}(O)$ is undecidable). This conjecture  has been verified in some particular cases \cite{Sha_Shla, Shla_book, new}, and it has been shown to be true assuming the Safarevich-Tate conjecture \cite{Mazur2010}.

The situation is much clearer for rings of integers of global function fields, i.e.\ for finite field extensions of rational function fields of the form $\mbb{F}_p(t)$ for some variable $t$ and  some prime integer $p$. Indeed, Shlapentokh \cite{ Shla_holomorphy_positive_char} showed that $\mbb{F}_p[t]$ is Diophantine in any such ring $O$, and consequently that $\mc{D}(O)$ is undecidable.

 Some commutative rings where the Diophantine problem remains open are most remarkably $\mbb Q$ (it is known however that this problem is undecidable in $\mbb Z[S^{-1}]$, for $S$ an infinite set of primes of Dirichlet density 1 \cite{Poonen_on_Q}); the rational functions $\mbb C(t)$  (even though $\mc{D}(\mbb C (t_1,t_2))$ is undecidable \cite{Kim_Roush}); and the field of  Laurent series $\mbb{F}_p((t))$.  We refer to  \cite{Poonen, Phe_Zah, Shla_book, Koenigsmann2014} for further information and surveys of results in this direction.

Eisentraeger \cite[Theorem 7.1]{phd_eisentrager} proved the general result that for any finitely generated associative commutative unitary ring $R$, the Diophantine problem $\mc{D}(R)$ is undecidable conditionally on the conjecture that $\mc{D}(O)$ is undecidable for any ring of algebraic integers. Moreover, she showed that $\mc{D}(R)$ is undecidable in many cases, see \cite[Theorem 7.1]{phd_eisentrager} or Theorem \ref{t: phd_eisentrager} in this paper. 

 Regarding non-commutative rings, Romankov \cite{Romankov_eqns_2} showed that $\mc{D}(F)$ is undecidable in several types of free rings $F$, which include free Lie rings, free associative or non-associative rings, and free nilpotent rings.  One can view these rings as free $\mbb{Z}$-algebras, it is essentail, since the proofs use  undecidability of the Diophantine problem in the coefficients $\mbb{Z}$.  Using different methods Kharlampovich and Miasnikov  recently proved undecidability of $\mc{D}(A)$, for any of the following rings $A$: a free associative $k$-algebra, a free Lie $k$-algebra (of rank at least $3$), and group $k$-algebras $k(G)$ for various groups $G$ (including free, torsion-free hyperbolic, right-angled Artin, and other groups)  \cite{KM_free_algebras, KM_free_Lie_algebras}. In all these results the field $k$ is arbitrary, possibly with decidable $\mc{D}(k)$.  \\

\noindent We proceed to describe the results obtained in the present work.   In this paper we convene that \emph{all rings and algebras are possibly non-associative, non-commutative, and non-unitary} unless stated otherwise. A ring or algebra $R$ is called \emph{unitary} if and only if it has a multiplicative identity. Algebras will over be considered over associtative commutative unitary ring, and we fix $\L$ to denote such ring. Given a $\L$-algebra $L$, we let $L^2$ be the $\L$-module generated by all products of two elements of $L$.  
In this paper the notion of ring is equivalent to the notion of $\mbb Z$-algebra.

he main tool used in this paper is the so-called \emph{interpretability  by systems of equations} (or \emph{e-interpretability}), which is a variation of the classical notion of the first-order interpretability, where instead of arbitrary first-order formulas, finite systems of equations are used as the interpreting formulas (see Definition \ref{interpDfn} for details).  The main relevant property of such interpretations is that if $A_1$ is e-interpretable in $A_2$ then $\mc{D}(A_1)$  is Karp-reducible to $\mc{D}(A_2)$ by a polynomial time many-one reduction (Karp reductions). All reductions mentioned in this paper are of this type.

In number theoretic terms, an  \emph{interpretation by systems of equations}  is roughly a Diophantine definition  up to a Diophantine equivalence relation. Here Diophantine definitions are considered using systems of equations, as opposed to single equations.
We  convene that all systems of equations and all e-interpretations allow the use of any constant elements of the structures at hand, not necessarily in the signature.  See  Subsections \ref{s: e_interpretations}, \ref{Dioph_pblms_intro} and \ref{p: notation} for further comments on these matters.

One of the main results of the paper is the following. By $\Lr$ we refer to the language of rings with constants. We write $(R;\mc{L})$ to indicate that a structure $R$ is considered with a language $\mc{L}$.

\begin{theorem}\label{t: 1_intro} 
Let $A$ be a ring  (possibly non-associative, non-commutative, and non-unitary). Assume that $A$  is finitely generated as an abelian group, and that $ A^2$ is infinite.  Then there exists a  ring of algebraic integers $O$ such that $(O;\Lr)$  is e-interpretable (see below) in $(A; \Lr)$, and $\mc{D}(O; \Lr)$ is Karp-reducible to $\mc{D}(A,\Lr)$. If otherwise $A^2$ is finite, 
then $\mc{D}(R;\Lr)$ is decidable. 
\end{theorem}

Theorem \ref{t: 1_intro} is further generalized  to algebras.
The language of $\L$-modules $\La$, or of  $\L$-algebras $\Lal$, consists in the usual language of groups $\Lg$ or of rings $\Lr$, respectively,  together with unary functions $\{\cdot_\l \mid \l\in \L\}$ representing multiplication by elements of $\L$ (see Subsection \ref{p: notation}).

\begin{theorem}\label{t: 2_intro}
Let $R$ be a (possibly non-associative, non-commutative, and non-unitary)   algebra over a finitely generated associative commutative unitary ring $\L$. Suppose  that $R$ is finitely generated as a $\L$-module.  Then if,  $ R^2 = \langle \{xy\mid x\in R, y\in R\} \rangle_\L$ is infinite,  there exists a  ring of  integers $O$ of a number field or a  global function field such that $(O;\Lr)$  is e-interpretable in $(R; \Lal)$, and the Diophantine problem $\mc{D}(O;\Lr)$ is Karp-reducible to $\mc{D}(R;\Lal)$. Moreover: 
\begin{enumerate}
\item If $R^2$ is infinite and $\L$ has positive characteristic, then $(\mbb{F}_p[t];\Lr)$ is e-interpretable  in $(R;\Lal)$ for some prime integer $p$, and $\mc{D}(R;\Lal)$ is undecidable. 
\item If  $R^2 $ is finite and $\mc{D}(R; \La)$ is decidable, then $\mc{D}(R;\Lal)$ is decidable. 
\end{enumerate}
If $\L$ is a finite field, then all the above holds after replacing $(R;\Lal)$ by $(R;\Lr)$.  
\end{theorem}

Theorems \ref{t: 1_intro} and \ref{t: 2_intro} are further extended to other classes of finitely generated rings and algebras, including associative commutative non-unitary rings. We say that a non-unitary ring $R$ has characteristic $n\in\mbb{N}$ if $n$ is the smallest nonnegative integer such that $nr = 0$ for all $r\in R$.

\begin{theorem}\label{t: intro_non_unit_rings}
Let $A$  be a finitely generated associative commutative non-unitary ring, with   $A^2$   infinite. Then there exists a ring of integers $O$ of a number or a global function field such that $(O;\Lr)$ is e-interpretable in $(A; \Lr)$, and $\mc{D}(O;\Lr)$ is Karp-reducible to $\mc{D}(A;\Lr)$. 

Moreover, if $A$ has positive characteristic, then the following holds: $O$ is the ring of integers of a global function field; the ring of polynomials  $(\mbb{F}_p[t]; \Lr)$ is e-interpretable in $A$ for  prime integer $p$; and $\mc{D}(A; \Lr)$ is undecidable.
\end{theorem}

In \cite[Theorem 7.1]{phd_eisentrager}, Eisentraeger  studied the Diophantine problem in finitely generated associative commutative unitary rings. The main result of that work is stated in this paper in Theorem \ref{t: phd_eisentrager}.  The above Theorem \ref{t: intro_non_unit_rings}, together with the aforementioned result,  provide insight on the Diophantine problem in finitely generated associative commutative rings, unitary or not. Indeed, for any such ring $R$,   one can reduce $\mc{D}(O;\Lr)$  to $\mc{D}(R;\Lr)$  for some ring of integers $O$ of a number or global function field, and in a wide variety of cases $O$ turns out to be a ring of integers of a global function field, making $\mc{D}(R; \Lr)$ undecidable due to Shlapentokh's work \cite{ Shla_holomorphy_positive_char}.

We also prove a  statement analogous to Theorem \ref{t: intro_non_unit_rings} for algebras:

\begin{theorem}\label{t: intro_non_unit_algebras}
 Let $L$ be a finitely generated associative commutative non-unitary  algebra over a finitely generated associative commutative unitary ring $\Theta$, with $L^2$ infinite. Then there exists a  ring of  integers $O$ of a number field or a  global function field such that $(O;\Lr)$  is e-interpretable in $(L; \Lal)$, and $\mc{D}(O;\Lr)$ is Karp-reducible to $\mc{D}(L;\Lal)$. 

Moreover,  if $\Theta$ has positive characteristic, then $O$ is the ring of integers of a global function field,  $(\mbb{F}_p[t];\Lr)$ is e-interpretable  in $(L;\Lal)$ for some prime integer $p$, and $\mc{D}(L;\Lal)$ is undecidable.  
\end{theorem}

Our results also involve some classes of possibly  non-associative and non-commutative algebras and rings. We only give the statement for algebras, keeping in mind that the statement for rings is obtained by taking $\L = \mbb{Z}$. We need the following definition:  Let $L$ be a $\L$-algebra, and let $T$ be a generating set of $R$. If $R$ is non-unitary then we let $I_n(T)$ or $I_n$ denote the $\L$-ideal generated by all  products of $n$ elements of $T$. If $L$ is unitary then we let $I_n(T)$, or $I_n$ in short, denote the $\L$-ideal generated by all  products of $n$ elements of $T\setminus\{\lambda\cdot 1 \mid \lambda\in \Lambda\}$, where $1$ denotes the multiplicative identity of $R$.  We say that $L$ is \emph{left-normed-generated} with respect to $T$ if  for all $n \geq 1$, $I_n(T)$ is generated as a $\L$-module by a (possibly infinite) set of elements of the form $(t_1(t_2( \dots (t_{k-1} t_k)\dots)))$, with $k\geq n$ and $t_i \in T$ for all $i$.

\begin{theorem}\label{t: intro_non_commutative}
Let $L$ be a   finitely generated  algebra (possibly  non-associative, non-commutative and non-unitary) over a finitely generated associative commutative unitary ring $\L$. Suppose  that $L$ is  left-normed-generated with respect to some finite generating set $T$, and that $(L/I_{n}(T))^2$ is infinite for some $n\geq 1$. Then there exists a  ring of  integers $O$ of a number field or of a  global function field such that $(O;\Lr)$ is e-interpretable in $(R; \Lal)$, and $\mc{D}(O; \Lr)$ is  Karp-reducible to $\mc{D}(L; \Lal)$. Moreover:
\begin{enumerate}

\item If $\L$ has positive characteristic $p$, then $(\mbb{F}_p[t];\Lr)$ is e-interpretable in $(L; \Lal)$, and $\mc{D}(L;\Lal)$ is undecidable.

\item  If $L$ is a ring (i.e.\ $\L=\mbb{Z}$) then $O$ is a ring of algebraic integers.
    
\end{enumerate}
If $\L$ is $\mbb Z$ or a finite field then all the above holds after replacing $(L;\Lal)$ by $(L;\Lr)$.
\end{theorem}

We obtain the following applications of the result above. By $[R/I_n,R/I_n]$ we denote the $\L$-submodule of $R/I_n$ generated by   $\{[x,y]\mid x,y\in R/I_n\}$. 

\begin{corollary}\label{t: intro_Lie}
Let $L$ be a  finitely generated  Lie $\L$-algebra. Assume that $[R/I_n,R/I_n]$ is infinite for some $n\geq 1$, and that $\L$ is finitely generated.  Then the conclusions of Theorem \ref{t: finitely_generated_algebras}  hold for $L$.
\end{corollary}

\begin{corollary}\label{c: intro_free}
Let $F$ be a finitely generated  free associative $\L$-algebra (possibly non-commutative and non-unitary) or a free Lie algebra of rank at least $2$, with $\L$ finitely generated. Then the conclusions of Theorem \ref{t: finitely_generated_algebras} hold for $F$.
\end{corollary}

This complements the previously mentioned results of Romankov \cite{Romankov_eqns_2} and of Kharlampovich and Miasnikov \cite{ KM_free_algebras, KM_free_Lie_algebras} regarding free algebras. We remark that in \cite{Romankov_eqns_2} it is proved  (among others) that the algebras of Corollary \ref{c: free} actually have undecidable Diophantine problem if $\L=\mbb Z$.

It is known that the first order theory of any ring of integers of a number field or of a global function field is undecidable. Therefore we obtain the following consequence:

\begin{theorem}\label{t: 4_intro}
Suppose that $A$ satisfies the hypotheses of any of the theorems and corollaries above. Then the first-order theory of $A$ in the corresponding language with constants is undecidable.
\end{theorem}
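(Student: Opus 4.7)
The plan is to reduce Theorem \ref{t: 4_intro} to Noskov's theorem via the e-interpretations produced by the earlier results. The key observation is that an e-interpretation is, by definition, a first-order interpretation (with constants allowed) whose defining formulas happen to be systems of equations; in particular, every e-interpretation of $(O;\Lr)$ in $A$ yields a first-order interpretation of $(O;\Lr)$ in $A$ in the corresponding language (with constants).

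First, under the hypotheses of any of Theorems \ref{t: 1_intro}, \ref{t: 2_intro}, \ref{t: 12_intro}, or Corollaries \ref{t: 5_intro} and \ref{c: free_intro}, those theorems furnish a ring of integers $O$ of a global field such that $(O;\Lr)$ is e-interpretable in $A$ in the appropriate language ($\Lr$ or $\Lal$, as the case may be). Since such an $O$ is an infinite finitely generated ring of scalars, Noskov's theorem \cite{Noskov_rings}, invoked immediately before the statement of Theorem \ref{t: 4_intro}, implies that the first-order theory of $(O;\Lr)$ with constants is undecidable.

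Finally, I would invoke the standard model-theoretic fact that a first-order interpretation with constants of $B$ in $C$ induces a computable translation of $\Lr$-sentences with constants in $B$ into $\mc{L}$-sentences with constants in $C$ preserving truth. Consequently, undecidability of the theory of $B$ with constants transfers to the theory of $C$ in the language $\mc{L}$ with constants. Applying this with $B=(O;\Lr)$ and $C=A$ in its corresponding language gives the claim. The argument is essentially bookkeeping; the only point requiring care is verifying in each cited theorem that the language of the e-interpretation matches the language asserted in Theorem \ref{t: 4_intro}, and this is immediate from the formulations of those results.
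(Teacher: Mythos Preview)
Your proposal is correct and follows essentially the same approach as the paper: the paper's argument (given in the section on undecidability of first order theories, via Proposition~\ref{Diophantine_reduction}) also invokes Noskov's theorem for the ring of integers $O$ obtained from the earlier e-interpretations, and then transfers undecidability of the first order theory along the e-interpretation. The only minor omission is that you do not explicitly mention Theorem~\ref{t: 3_intro} and its non-unitary analogue among the ``theorems above,'' but the same reasoning applies verbatim there.
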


The above result extends Noskov's work \cite{Noskov_rings} where it is proved that all finitely generated infinite associative commutative unitary rings have undecidable first-order theory.

\paragraph{From bilinear maps to associative commutative unitary algebras.}

 The main techniques developed in this paper allow to move from studying arbitrary rings and algebras (possibly non-associative,  non-commutative, and non-unitary) to  studying finitely generated associative commutative unitary rings. The Diophantine problem in the latter  scenario is more or less understood, modulo the Diophantine problem of  rings of integers of  number fields, as was shown in  \cite[Theorem 7.1]{phd_eisentrager}.  

The  reduction from arbitrary rings   (algebras) to associative commutative unitary rings (algebras) is achieved through the study of rings of scalars of bilinear maps between $\L$-modules, where $\L$ is a finitely generated associative commutative unitary ring (when dealing with rings we have $\L=\mbb{Z}$). These are relevant for us because much of the structure of a  $\L$-algebra (or a ring) can be ``seen'' in its ring multiplication operation, which is indeed a $\L$-bilinear map between $\L$-modules.   In fact, bilinear maps also arise naturally in other structures, and in some cases, it is possible to apply the methods presented in this paper  to these, for example, in some classes of groups. In  \cite{GMO_solvable} we explore further this line of work for several classes of solvable groups.
 Next we describe further our approach with bilinear maps.  Some of the ideas we present now were introduced by the second named author in \cite{Myasnikov1990}, and they have been used successfully to study different first-order theoretic aspects of different types of structures, including rings whose additive group is finitely generated \cite{Mi_So_4}, free algebras \cite{KM3, KM2, KM1}, and nilpotent groups \cite{  Mi_so_3, Mi_So_2}. Our contribution is a treatment of these ideas by means of systems of equations.

Observe that ring multiplication $\cdot$ of a $\L$-algebra $R$ is, by definition, a $\L$-bilinear map between $\L$-modules. One can try to replace $\L$  by a ``larger'' associative commutative unitary ring $\Delta$.  To do so, one needs to find a ring $\Delta$ that acts on $R$ by  $\L$-module endomorphisms (thus making the additive group of $R$ into a  $\Delta$-module), in a way that $\cdot$ becomes a $\Delta$-bilinear map between $\Delta$-modules. In this case, we say that $\Delta$ is a \emph{ring of scalars} of the multiplication map $\cdot$.  

These considerations apply in the same way if one starts with an arbitrary $\L$-bilinear map $f:N\times N \to M$ between $\L$-modules $N$ and $M$. If $f$ is full and non-degenerate (see Subsection \ref{s: rs_full_nonfegenerate_bilin}) then one can define the \emph{largest} ring of scalars of $f$, denoted $R(f)$. This ring constitutes an important feature of $f$, and in some sense, it provides an ``approximation'' to interpreting (in $(N,M;f;\La)$) multiplication of constant elements from $N$ and $M$ by integer variables, or by variables taking values in $\L$. Another important property of $R(f)$ is that it is interpretable in $(N,M;f;\La)$ by first-order formulas without constants \cite{Myasnikov1990}.  In this paper, we prove that this is still true if one uses systems of equations instead (with constants).

\begin{theorem}\label{10_intro}
If $f$ is full and non-degenerate, and if  $N$ and $M$ are finitely generated, then both $Z(\Sym(f))$ and the largest ring of scalars $R(f)$ of $f$ are e-interpretable in the two sorted structure $(N,M; f, \La)$. 
\end{theorem}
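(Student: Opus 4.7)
The plan is to refine the first-order interpretation of $R(f)$ from \cite{Myasnikov1990} into a positive primitive (Diophantine) one, by encoding endomorphisms through their values on a finite generating set of $N$ and by exploiting non-degeneracy of $f$ to replace characterizations that use universal quantifiers with systems of equations. Throughout I use that in every application of this theorem $\L$ is finitely generated as a ring of scalars, hence Noetherian, so that $N$, $M$, and $\End_\L(N)$ are finitely presented/generated over $\L$ and all relation modules appearing below admit finite generating sets.

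Fix $\L$-generators $n_1, \ldots, n_k$ of $N$; by fullness of $f$ the elements $m_{ij} := f(n_i, n_j)$ generate $M$. Let the defining relations be $\sum_i \lambda_i^{(s)} n_i = 0$ (for $s = 1, \ldots, t_1$) and $\sum_{i,j} \mu_{ij}^{(t)} f(n_i, n_j) = 0$ (for $t = 1, \ldots, t_2$). To any $\L$-endomorphism $\alpha$ of $N$ I attach the tuple $\vec a = (\alpha(n_1), \ldots, \alpha(n_k)) \in N^k$; conversely, such a tuple lifts to an endomorphism precisely when $\sum_i \lambda_i^{(s)} a_i = 0$ for every $s$, which is a finite Diophantine condition in $(N, M; f, \La)$. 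For the largest ring of scalars I then let $D_R \subseteq N^k$ consist of tuples $\vec a$ satisfying the above well-definedness system together with
\[
f(a_i, n_j) = f(n_i, a_j) \ \text{ for all } i, j, \qquad \sum_{i,j} \mu_{ij}^{(t)} f(a_i, n_j) = 0 \ \text{ for all } t.
\]
The first batch forces $\alpha \in \Sym(f)$; the second says that the partial assignment $m_{ij} \mapsto f(a_i, n_j)$ respects every defining relation of $M$, so by fullness it extends uniquely to a $\L$-endomorphism $\beta$ of $M$ making $(\alpha, \beta)$ an element of $R(f)$, and conversely every element of $R(f)$ arises this way. Equality and addition on $D_R$ are componentwise, hence Diophantine.

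The decisive point is that multiplication is Diophantine as well. For $\delta_s = (\alpha_s, \beta_s) \in R(f)$, the symmetry of $\alpha_1$ gives
\[
f\bigl(\alpha_1 \alpha_2 n_i,\; n_j\bigr) = f\bigl(\alpha_2 n_i,\; \alpha_1 n_j\bigr) = f\bigl(a_i^{(2)},\; a_j^{(1)}\bigr),
\]
so by non-degeneracy in the left argument the tuple $\vec a^{(3)}$ representing $\alpha_1 \alpha_2$ is characterized by the system $f(a_i^{(3)}, n_j) = f(a_i^{(2)}, a_j^{(1)})$ for all $i, j$. Combined with membership of $\vec a^{(3)}$ in $D_R$, this is a positive-primitive definition of the graph of multiplication. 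The same strategy e-interprets $Z(\Sym(f))$: define $D_{\Sym} \subseteq N^k$ by the well-definedness and symmetry systems alone, fix once and for all a finite list $\gamma_1, \ldots, \gamma_r$ of $\L$-module generators of the finitely generated module $\Sym(f) \subseteq \End_\L(N)$, treat each $\gamma_s(n_i)$ as a constant, and carve out the centre by the commutation equations $f(\gamma_s(n_i), a_j) = f(a_i, \gamma_s(n_j))$ (these express $\alpha \gamma_s = \gamma_s \alpha$ by the same symmetry-plus-non-degeneracy argument applied to $\gamma_s \in \Sym(f)$).

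The main obstacle I foresee is ensuring that all the systems above are finite, which hinges on $N$, $M$, and $\Sym(f)$ being finitely presented/generated over $\L$; this is automatic under Noetherianness, which holds in every application in the paper. The remaining verifications---that the induced $\beta$ is truly defined on all of $M$, that the composition formula identifies the correct product in $R(f)$, and that the list $\gamma_1, \ldots, \gamma_r$ generates enough of $\Sym(f)$ so that commuting with each $\gamma_s$ is equivalent to lying in $Z(\Sym(f))$---are routine bilinear-algebra computations enabled by non-degeneracy and fullness of $f$.
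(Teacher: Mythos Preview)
Your proposal is correct and follows essentially the same approach as the paper: encode endomorphisms as tuples in $N^k$ subject to the finitely many defining relations of $N$, e-define $\Sym(f)$ via the symmetry equations $f(a_i,n_j)=f(n_i,a_j)$, carve out the centre using a finite $\L$-module generating set of $\Sym(f)$ together with non-degeneracy, define multiplication by $f(a_i^{(3)},n_j)=f(a_i^{(2)},a_j^{(1)})$, and isolate $R(f)$ by the finitely many well-definedness relations coming from a presentation of $M$. The only cosmetic difference is that the paper first e-interprets $Z(\Sym(f))$ and then cuts out $R(f)$ inside it, whereas you characterize $R(f)$ directly by symmetry plus the $M$-well-definedness condition; this works because (as the paper's existence proof shows) any $\alpha\in\Sym(f)$ with a compatible action on $M$ automatically lies in $Z(\Sym(f))$.
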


 By $\End(N)$ we denote   the algebra  of $\L$-endomorphisms of $N$. The ring $\Sym(f)$ is defined as  $$\Sym(f)=\{\alpha \in \End(A)\mid f(\alpha x,y)=f(x,\alpha y) \ \forall \ x,y\in A\},$$ and $Z(\Sym(f))$ denotes the center of $\Sym(f)$ (i.e.\ the set of elements from $\Sym(f)$ that commute with all elements from $\Sym(f)$). The interest we have for $Z(\Sym(f))$ is mostly technical. This is explained in Remark \ref{r: choose_ring}.

We next provide an idea of the proof of Theorem \ref{10_intro}. There are two main observations. The first goes as follows: Both $Z(\Sym(f))$ and $R(f)$ can be seen as subalgebras $\End(N)$. Let $a_1, \dots, a_k$ be a  module generating set of $N$.  Then each $\alpha \in \End(N)$ can be identified with the tuple $(\alpha a_1, \dots, \alpha a_k) \in N^k$, and so we can think of $Z(\Sym(f))$ and $R(f)$ as $\L$-submodules of $N^k$ with an extra ring multiplication operation. In particular we manage to first e-interpret the whole algebra $\End(N)$ in $(N;\Lal)$, together with the action of any element of $\End(N)$ on the elements $\{a_1, \dots, a_k\}$. 

The second idea is to use the properties of $f$ in order to ``express'' statements about  endomorphisms from $Z(\Sym(f))$ and of $R(f)$ in terms of their actions on $a_1, \dots, a_k$. For example, given $\alpha, \beta, \gamma\in Z(\Sym(f))$, one has  that $\gamma=\alpha \beta$ if and only if 
$
f(\gamma a_i, a_j)= f(\beta a_i, \alpha a_j)$ for all  $1\leq i,j\leq k
$
(this is proved using bilinearity of $f$ and the fact that $f(\alpha \beta x,y)=f(\beta x, \alpha y)$ for all $x$ and $y$). This and the considerations in the previous paragraph can be combined to  show (after some work) that  multiplication in $Z(\Sym(f))$ is e-interpretable in $(N,M;f,\La)$.  The rest of the proof follows in a similar fashion, with the e-interpretation of $R(f)$ being more involved but of a likewise spirit.\\

In Subsection \ref{s: arbitrary_bilinear_maps} we generalize Theorem \ref{10_intro} to the following result. 
\begin{theorem}\label{t: 11_intro}
Let $f:A \times B\to C$ be a $\Lambda$-bilinear map between finitely generated $\L$-modules. Then there exists an   associative commutative unitary ring $\Theta$   that is a $\L$-algebra and is finitely generated as a $\L$-module, with the property that $(\Theta;\Lal)$ is  e-interpretable  in $F=(A,B,C;f, \La)$. If $\L$ is the ring $\mbb Z$  or a field, then $\Lal$ and $\La$ can be replaced by $\Lr$ and $\Lg$, respectively.
\end{theorem}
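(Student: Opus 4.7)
My plan is to deduce Theorem~\ref{t: 11_intro} from Theorem~\ref{10_intro} by two successive reductions: first, make $f$ full and non-degenerate; second, collapse the two arguments of $f$ into a single module.

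For the first reduction, fix finite generating sets $a_1, \dots, a_p$ of $A$ and $b_1, \dots, b_q$ of $B$ as $\L$-modules, and consider
\[
A_0 = \{a \in A : f(a, b_j) = 0 \text{ for all } j\}, \qquad B_0 = \{b \in B : f(a_i, b) = 0 \text{ for all } i\}.
\]
By $\L$-bilinearity these coincide with the left and right degeneracy submodules of $f$, and since only finitely many generators appear, both are e-definable in $F$ with the generators serving as constant parameters. Hence $A' = A/A_0$ and $B' = B/B_0$ are e-interpretable in $F$ via the standard quotient construction. Similarly, the $\L$-submodule $C' \subseteq C$ generated by $f(A\times B)$ equals $\{\sum_{j=1}^q f(x_j, b_j) : x_j \in A\}$ by bilinearity, so it is e-definable. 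The induced map $\bar f : A' \times B' \to C'$ is well-defined, $\L$-bilinear, full, and non-degenerate.

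For the second reduction I put $N = A' \oplus B'$, e-interpretable as ordered pairs, and define $g : N \times N \to C'$ by
\[
g\bigl((a_1, b_1), (a_2, b_2)\bigr) = \bar f(a_1, b_2) + \bar f(a_2, b_1).
\]
This $g$ is symmetric and $\L$-bilinear. Fullness is immediate because $g((a,0),(0,b)) = \bar f(a,b)$, so the image of $g$ already equals $C'$. Non-degeneracy reduces to that of $\bar f$: if $g((a,b),\cdot) = 0$, then specializing the second argument to $(a', 0)$ gives $\bar f(a', b) = 0$ for all $a' \in A'$, whence $b = 0$; specializing to $(0, b')$ gives $a = 0$. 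Thus $(N, C'; g, \La)$ is e-interpretable in $F$ and satisfies the hypotheses of Theorem~\ref{10_intro}, which then produces an e-interpretation of $\Theta := Z(\Sym(g))$ (or equivalently of the largest ring of scalars $R(g)$) in $(N, C'; g, \La)$ in the language $\Lal$. Composing, $(\Theta; \Lal)$ is e-interpreted in $F$. The ring $\Theta$ is associative, commutative and unitary by definition; it is module-finite over $\L$ because it embeds into $\End(N) \times \End(C')$, which in turn embeds into $N^k \times (C')^\ell$ via evaluation on generators, and this is a Noetherian $\L$-module whenever $\L$ is Noetherian, which is automatic in every intended application (e.g.\ when $\L$ is a finitely generated ring of scalars, by Hilbert's basis theorem).

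For the language switch, when $\L = \mbb Z$ one has $\Lal = \Lr$ and $\La = \Lg$, so nothing has to be done. When $\L = k$ is a field, each unary operation $\cdot_\lambda$ on $\Theta$ equals ring multiplication by the constant $\lambda \cdot 1_\Theta$, so the output language $\Lal$ reduces to $\Lr$; on the input side the $k$-action on $A, B, C$ is pinned down by $f$ together with bilinearity and non-degeneracy, allowing $\La$ to be dropped to $\Lg$. I expect the main obstacle to sit not in the two reductions themselves -- which are essentially formal consequences of bilinearity and finite generation -- but in the module-finiteness argument, where one must justify the Noetherianness of $\L$ behind the scenes, and, in the field case, in the careful verification that the $k$-action on the input sorts remains e-definable once $\La$ is replaced by $\Lg$.
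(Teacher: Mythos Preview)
Your two reductions match the paper's almost exactly: the paper passes to $A_1 = A/\Ann_l(f)$, $B_1 = B/\Ann_r(f)$, $C_1 = \langle f(A,B)\rangle_\L$, and then (when $A_1 \neq B_1$) defines $f_2:(A_1\times B_1)^2 \to C_1\times C_1$ by $f_2((a,b),(a',b')) = (\bar f(a,b'),\bar f(a',b))$, rather than your sum into $C'$. Your symmetrized version is a legitimate variant, and your checks of fullness and non-degeneracy are correct. The Noetherian hypothesis on $\L$ is indeed assumed explicitly in the body of the paper (Subsection~\ref{s: arbitrary_bilinear_maps}), so your hesitation there is well placed but not a gap.

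There is one genuine imprecision. For the field case you write that ``the $k$-action on $A,B,C$ is pinned down by $f$ together with bilinearity and non-degeneracy, allowing $\La$ to be dropped to $\Lg$.'' This is not the right mechanism, and in general it is false: the abelian-group structure together with $f$ need not determine the $k$-vector-space structure on $A,B,C$. What the paper actually does (and what your construction also achieves, though you do not say it) is to verify that \emph{none of the e-definitions ever invoke scalar multiplication on the input sorts}. Your e-definitions of $A_0$, $B_0$, $C'$, $N$, and $g$ use only $f$, addition, and constants; and Theorem~\ref{10_intro} (Item~1 in the body version) supplies $(Z(\Sym(g));\Lr)$ from $(N,C';g,\Lg)$ alone, because when $\L$ is a field the module $N$ is free, so $\End(N)$ is all of $N^m$ and the defining systems \eqref{Sym_dfn_system}--\eqref{dfn_mult} require no scalar multiplication. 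Replace your ``pinned down'' sentence with this observation and the argument is complete.
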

As mentioned above, the ring multiplication of any    $\L$-algebra $R$, finitely generated as a $\L$-module, is a $\L$-bilinear map $\cdot: R\times R \to R$ between finitely generated $\L$-modules, and $(R,R,R;\cdot, \La)$ is  e-interpretable in $(R;\Lal)$. Applying Theorem \ref{t: 11_intro} and transitivity of e-interpretations we manage to  move from the possibly non-associative, non-commutative, and non-unitary $R$ to an associative, commutative, unitary algebra.

As we discussed in the abstract of the present paper, we believe that the above Theorem \ref{t: 11_intro} constitutes one of the main contributions of the paper, with potential applicability to other structures other than rings or algebras.

\section{Preliminaries}

\subsection{Interpretability by systems of equations}

\subsubsection{Multi-sorted structures} 

A \emph{multi-sorted structure} $\mathcal{A}$ is a tuple $\mathcal{A}=(A_i; f_j, r_k, c_{\ell} \mid i\in I,j\in J,k\in K,\ell\in L),$ where the $A_i$ are pairwise disjoint sets called \emph{sorts}; the $f_j$ are functions of the form
$
f_j: A_{i_{1}} \times \dots \times A_{i_{m}} \to A_{i_{m+1}} 
$ for some $i_{1}, \dots, i_{m+1}\in I$;
the $r_k$ are relations of the form
$
r_k: A_{i_1'} \times \dots \times A_{i_{p}'} \to \{0,1\}, 
$
for some $i_1', \dots, i_p'\in I$; and the $c_{\ell}$ are constants, each one belonging to some sort.  The tuple $(f_j, r_k, c_\ell \mid j\in J,k\in K,\ell\in L)$ is called the \emph{signature} or the \emph{language} of $\mc{A}$. We always assume   that $\mc{A}$ contains the relations "equality in $A_i$", for all sorts $A_i$, but we do not write them in the signature. 

 All our terms will allow the use of any constant element in any sort, regardless of whether the constant is in the signature of the structure at hand. For this reason, and without losing generality, we will  always work with (multi-sorted) structures without constants in the signature.

If $\mc{A}$ has only one sort then $\mc{A}$ is a structure in the usual sense.  One can construct terms in a multi-sorted structure in an analogous way as in uniquely-sorted structures.  In this case, when introducing a variable $x$, one must specify a sort where it takes values, which we denote $A_x$.

%

%

A set $S$ of \emph{generators} of $\mc{A}$ is a collection of elements from different sorts such that any element from any sort can be written as a term using only constants from $S$ and from the signature of $\mc{A}$ (and using function symbols).

\begin{example}
A ring is a structure $(R, +, \cdot, =)$ with one sort $R$, the operations of addition $+$ and multiplication $\cdot$, and the equality relation $=$. When there is no risk of ambiguity we will always denote a classical one-sorted structure, such as a ring  or a group, simply by its sort, that is we denote a ring $(R, +, \cdot, =)$ simply by $R$.

In this paper we understand left modules over a ring as one-sorted structures in the way explained in Section \ref{p: notation}. An alternative formulation arises by considering two sorts $A$ (the underlying abelian group) and $R$ (the ring acting on $A$), a group addition $+_A$,  ring addition and multiplication $+_R, \cdot_R$,  equality relations $=_R$, $=_A$, and an action operation $\cdot: R\times A \to A$. We stress again that this is not the approach taken in this paper.
\end{example}

Let $\mathcal{A}_1, \dots, \mathcal{A}_n$ be a collection of multi-sorted structures. 
We let $(\mathcal{A}_1, \dots,  \mathcal{A}_n)$ be the multi-sorted structure that is formed by all the sorts, functions, relations, and constants of each $\mathcal{A}_i$. Given a function $f$ or a relation $r$ we  use the notation $(\mc{A},f)$ or $(\mc{A},r)$ to denote the multi-sorted structure $\mc{A}$ with the additional function $f$ or relation $r$. 

\begin{example}
The following example will be used later in the paper.  Let $A, B, C$ be abelian groups, and let $f:A\times B \to C$ be a bilinear map, i.e.\ a map such that for all $a\in A$ the map $f(a, \cdot): B \to C$ is a group homomorphism, and similarly, for all $b\in B$ the map $f(\cdot, b): B \to C$ is a group homomorphism.

Then one can consider the multi-sorted structure $(A, B, C, f)$. This is formed by the sorts  $A, B, C$; the group operations and relations of $A, B$, and $C$, and the operation given by the map $f$.

An example of a terms in  $(A, B, C, f)$ is $f(x,b) + y$ where $b$ is an element from $B$, and $x,y$ are variables taking values in $A$ and $C$, respectively.  
\end{example}

\subsubsection{Diophantine problems and reductions.}\label{Dioph_pblms_intro} 
Let $\mathcal{A}$ be  a multi-sorted structure. An \emph{equation in $\mc{A}$}  is an expression  of the form $r(\tau_1, \dots, \tau_k)$, where $r$ is a signature relation of $\mc{A}$ (typically, the equality relation), and each $\tau_i$ is a term in $\mc{A}$ (taking values in an appropriate sort) where some of its variables may have been substituted by elements of $\mc{A}$. Such elements are called the \emph{coefficients} (or the \emph{constants}) of the equation. These may not be signature constants. 
A system of equations is a finite conjunction of equations. A \emph{solution} to a system of equations $\wedge_i \Sigma_i(x_1, \dots, x_n)$ on variables $x_1, \dots, x_n$ is a tuple $(a_1, \dots, a_n)\in A_{x_1}\times \dots \times A_{x_{n}}$ such that all equations $\Sigma_i(a_1, \dots, a_n)$ are true in $\mc{A}$.

The \emph{Diophantine problem} in $\mc{A}$, denoted  $\mc{D}(\mc{A})$, refers to the algorithmic problem of  determining if each given system of equations in $\mc{A}$ (with coefficients in a fixed computable set) has a  solution. Sometimes this is  also called \emph{Hilbert's tenth problem} in $\mc{A}$. An algorithm $L$ is  a \emph{solution} to $\mc{D}(\mc{A})$ if, given a system of equations $S$ in $\mc{A}$, determines whether $S$ has a solution or not. If such an algorithm exists, then $\mc{D}(\mc{A})$ is called \emph{decidable}, and, if it does not, \emph{undecidable}.

An algorithmic problem $P_1$ is said to be \emph{Karp-reducible} (or \emph{polynomial-time many-one reducible}) to another problem $P_2$ if there is a polynomial-time algorithm that transforms inputs to problem $P_1$ into inputs to problem $P_2$, such that both problems have the same output given an input and the transformed input, respectively. 

A crucial observation is that if $P_1$ is undecidable, and $P_1$ is Karp-reducible to $P_2$, then $P_2$ is undecidable as well. 

In some cases, one restricts the set of coefficients $C$ that can be used in the input equations of the Diophantine problem of a structure. For instance, one typically takes $C=\mbb Z$ when studying  $\mc{D}(\mbb Q)$ (equivalently one can take $C=\{0,1\}$). In this paper, we will always need that $C$ contains certain coefficients, namely those used in a certain e-interpretation, and maybe also the preimage of some constants of the structure that is being e-interpreted. For this reason, and to simplify the exposition, we agree that $C$ is always the whole structure, or a suitable computable subset if the structure is not countable.

\subsubsection{Interpretations by systems of equations}\label{s: e_interpretations}
In this section we review the notion of interpretability by systems of equations between multi-sorted structures. Here we use a much more general setting since our arguments will require handling a variety  of multi-sorted structures.

\emph{Interpretability by systems of equations (e-interpretability)} is the analog of the classic model-theoretic notion of interpretability by first-order formulas (see \cite{Hodges, Marker}). In e-interpretability one requires that only systems of equations \emph{with coefficients} are used, instead of first-order formulas.  From a number theoretic viewpoint, e-interpretability is roughly Diophantine definability by systems of equations up to a Diophantine definable equivalence relation. 

In this paper \----in e-interpretations and Diophantine problems\---- we consider \emph{systems} of equations and not just single equations. This may contrast with some number-theoretic settings, where systems of equations are equivalent to single equations, and both notions are treated interchangeably, i.e.\ when studying  integral domains whose field of fractions is not algebraically closed.

\begin{definition}
Let $\mc{A}$ be a structure with sorts $\{A_i \mid i\in I\}$. A \emph{basic set} of $\mc{A}$ is a set of the form
$
A_{i_1} \times \dots \times A_{i_m}
$
for some $m$ and $i_j$'s. 
\end{definition}

\begin{definition} 
Let $M$ be a basic set of a multi-sorted structure $\mathcal{M}$. A subset $A\subset M$ is called \emph{definable by equations} (or \emph{e-definable}) in $\mathcal{M}$  if there exists a system of equations
$
\Sigma_A(x_1,\ldots,x_m, y_1, \dots, y_k)$ on variables $(x_1, \dots, x_m, y_1, \dots, y_k)=(\mathbf{x}, \mathbf{y})$ such that $\mathbf{x}$ takes values in $M$, and such that 
for any tuple $\mathbf{a}\in M$, one has that  $\mathbf{a} \in A$ if and only if the system $\Sigma_A(\mathbf{a}, \mb{y})$ on variables $\mathbf {y}$  has a solution in $\mathcal{M}$. In this case $\Sigma_A$ is said to \emph{define} $A$ in $\mc{M}$.  The integer $n$ is called the \emph{dimension} of the e-definition.
\end{definition}

From an algebraic geometric viewpoint, an e-definable set is a projection onto some coordinates of an affine algebraic set. From a number theoretic point, it is a Diophantine definable set, allowing to use systems of equations rather than a single equation.

\begin{definition}\label{interpDfn}
Let $\mathcal{A}= \left(A_1, \dots; f, \dots, r \dots\right)$  and $\mc{M}$ be two multi-sorted structures. One says that $\mc{A}$ is  \emph{interpretable by equations} (or \emph{e-interpretable})  in   $\mathcal{M}$ if for each sort $A_i$ there exists a basic set $M(A_i)$ of $\mathcal{M}$, a subset $X_i\subseteq M_i$,  and a surjective map 
$
\phi_i:X_i \to A_i
$ 
such that:
\begin{enumerate}
\item $X_i$ is e-definable in $\mathcal{M}$, for all $i$. 
\item For each function $f$ and each relation $r$ in the signature of $\mc{A}$ (including the equality relation of each sort), the preimage by $\boldsymbol{\phi}=(\phi_{1}, \dots)$  of the graph of $f$ (and of $r$) is e-definable in $\mc{M}$, in which case we say that $f$ (or $r$) is e-interpretable in $\mc{M}$. The same terminology applies to functions and relations that are not necessarily in the signature of $\mc{A}$.   
\end{enumerate} 
The tuple of maps $\boldsymbol{\phi}=\left(\phi_1, \dots\right)$ is called  an \emph{e-interpretation} of $\mathcal{A}$ in $\mathcal{M}$.  
The map $\phi$ is called  an \emph{e-interpretation} of $R_1$ in $R_2$. 
We will say that $\mc{A}$ is \emph{e-interpretable} in $\mc{M}$ if there exists an e-interpretation $\phi$ of $\mc{A}$ in $\mc{M}$. It is usually clear, but not important, what the specific e-interpretation is. 
\end{definition}

The next lemma illustrates a key application of e-interpretability.

\begin{lemma}\label{factor_is_interpretable}
Let $R$ be a ring, not necessarily commutative or associative. Suppose $I\leq R$ is an ideal that admits a $1$-dimensional e-definition in $R$. Then $R/I$  is e-interpretable in $R$.
\end{lemma}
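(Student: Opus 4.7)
The plan is to take the set $R$ itself as the pre-sort, together with the canonical surjection $\phi \colon R \to R/I$ sending $r \mapsto r+I$. Thus the subset $X \subseteq R$ playing the role of $M(A)$ in Definition \ref{interpDfn} is all of $R$, which is trivially e-definable by the single equation $x = x$. What I then need to verify is that, under $\phi$, the preimages of equality, of the ring operations $+$, $-$, $\cdot$, and of the constant $0$ are all e-definable in $(R;\Lr)$.

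By hypothesis, $I$ is e-definable as a subset of $R$: fix a system $\Sigma_I(x, \mathbf{y})$ (with coefficients in $R$) such that $a \in I$ iff $\Sigma_I(a, \mathbf{y})$ has a solution. The preimage of equality in $R/I$ under $\phi \times \phi$ is exactly $\{(x_1,x_2) \in R^2 : x_1 - x_2 \in I\}$, which is e-defined by the system $\Sigma_I(x_1 - x_2, \mathbf{y})$. For addition, the preimage of the graph of $+$ in $R/I$ is $\{(x_1,x_2,x_3) \in R^3 : x_1 + x_2 - x_3 \in I\}$, e-defined by $\Sigma_I(x_1 + x_2 - x_3, \mathbf{y})$, and similarly for subtraction. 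For multiplication, using that $I$ is a two-sided ideal, the preimage of the graph of $\cdot$ is $\{(x_1,x_2,x_3) : x_1 x_2 - x_3 \in I\}$, e-defined by $\Sigma_I(x_1 x_2 - x_3, \mathbf{y})$; note this works even though $R$ is neither associative nor commutative, because we only need the single term $x_1 x_2$ to represent multiplication in $R$. Finally, the preimage of the constant $0 \in R/I$ is $I$ itself, e-defined directly by $\Sigma_I$.

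There is essentially no obstacle here: the whole argument is a direct composition of the e-defining system for $I$ with the syntactic ring terms $x_1 + x_2 - x_3$, $x_1 - x_2 - x_3$, $x_1 x_2 - x_3$, $x_1 - x_2$, and $x$. One minor point worth noting is that the coefficients of the resulting e-interpretation are precisely the coefficients appearing in $\Sigma_I$, so by our convention in Subsection \ref{Dioph_pblms_intro} on allowing arbitrary constants from the structure, nothing further needs to be said. This is the reason the lemma is stated without any finiteness or associativity assumption on $R$, and it is the prototype for how quotients will be handled throughout the paper.
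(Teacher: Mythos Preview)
Your proof is correct and follows essentially the same approach as the paper: take the canonical projection $R \to R/I$ as the e-interpretation, and e-define the preimages of equality, addition, and multiplication by substituting $x_1 - x_2$, $x_1 + x_2 - x_3$, and $x_1 x_2 - x_3$ into the e-defining system $\Sigma_I$ for $I$. Your version is slightly more explicit (handling subtraction and the constant $0$ separately), but the argument is the same.
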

\begin{proof}
Let $\Sigma_I(x, y_1, \dots, y_m)$ be a system of equations giving a 1-dimensional e-definition of $I$  in $R$, so that $a \in R$ belongs to $I$ if and only if $\Sigma_I(a,  y_1, \dots, y_m)$ has a solution on $y_1, \dots, y_m$. It suffices to check that the natural epimorphism $\pi:R\to R/I$ is an e-interpretation of $R/I$ in $R$. First observe that the preimage of $\pi$ is the whole $R$, which is clearly e-definable in $R$. Regarding the preimage of the equality relation of $R/I$, we have that  $\pi(a_1) =\pi(a_2)$ in $R/I$  if and only if $a_1-a_2\in I$, i.e.\ if and only if $\Sigma_I(a_1-a_2,  y_1, \dots, y_m)$ has a solution. From this it follows that the preimage  of equality in $R/I$,  i.e.\
$
\left\{ a_1, a_2 \in R \mid \pi(a_1) = \pi(a_2) \right\},
$
is e-definable in $R$ by the system of equations $\Sigma_I'(x_1, x_2, y_1, \dots, y_m)$ obtained from $\Sigma_I(x,  y_1, \dots, y_m)$ after substituting each occurrence of $x$ by $x_1-x_2$, where $x_1$ and $x_2$ are fresh new variables.  

By similar arguments, the preimages of the addition and multiplication operations of $R/I$ are e-definable in $R$: indeed, for any three elements $a_1, a_2, a_3\in R$ we have that $\pi(a_1)+\pi(a_2) = \pi(a_3)$ if and only if $a_1+a_2-a_3 \in I$, and $\pi(a_1)\pi(a_2) = \pi(a_3)$ if and only if $a_1a_2-a_3\in I$.
\end{proof}

Interestingly, all finitely generated ideals of a ring are e-interpretable in it:

\begin{lemma}\label{interpretations_in_rings}
Let $I$ be a finitely generated ideal of a ring $R$. Then $I$ is e-definable in $R$. As a consequence, $R/I$ is e-interpretable in $R$.
\end{lemma}

\begin{proof}
Let $a_1, \dots, a_n$ be a generating set of $I$. Then the equation $x=\sum x_i a_i$ on variables $(x, x_1, \dots, x_n)$ e-defines $I$ in $R$.  Lemma \ref{factor_is_interpretable} now implies that  $R/I$ is e-interpretable in $R$.
\end{proof}

Note that any finitely generated ring $R$ (as all rings considered in this work) is Noetherian, i.e. all their ideals are finitely generated (this follows from Hilbert's basis theorem). Thus any ideal $I$ of $R$ is e-definable in $R$ and $R/I$  is e-interpretable in $R$.

\begin{remark}\label{r: factor_is_interpretable}

It is clear from the proof that an analog of Lemma \ref{factor_is_interpretable} holds for other structures, such as groups with e-definable normal subgroups,  modules with e-definable submodules, etc.
\end{remark}

The following remark will be used several times without referring to it.

\begin{remark}\label{r: contained_e_def_implies_e_interp}
Let $\mc{A}=(A; f,\dots; r,\dots)$ and $\mc{B}=(B; f',\dots; r',\dots)$ be uniquely-sorted structures such that $A \subseteq B$ and all functions and relations of $\mc{A}$ are also functions and relations of $\mc{B}$. Assume $A$ is e-definable in $\mc{B}$. Then, clearly, $\mc{A}$ is e-interpretable in $\mc{B}$.
\end{remark}

The next two results are fundamental. They follow from Lemma \ref{equation_reduction}, which we present at the end of this subsection.

\begin{proposition}[\normalfont{E-interpretability is transitive}]\label{interpretation_transitivity}
If $\mc{A}$ is e-interpretable in $\mc{B}$ and $\mc{B}$ is e-interpretable in $\mc{M}$, then $\mc{A}$ is e-interpretable in $\mc{M}$.
\end{proposition}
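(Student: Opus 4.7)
Let $\boldsymbol{\phi}=(\phi_i\colon X_i \to A_i)_{i}$ be the given e-interpretation of $\mc{A}$ in $\mc{B}$, where each $X_i$ lies in a basic set $B(A_i)=B_{i_1}\times\cdots\times B_{i_{m_i}}$ of $\mc{B}$, and let $\boldsymbol{\psi}=(\psi_j\colon Y_j \to B_j)_j$ be the e-interpretation of $\mc{B}$ in $\mc{M}$, where each $Y_j$ lies in a basic set $M(B_j)$ of $\mc{M}$. The plan is to produce an e-interpretation $\boldsymbol{\phi}'$ of $\mc{A}$ directly in $\mc{M}$ by composing $\boldsymbol{\phi}$ with the componentwise lift of $\boldsymbol{\psi}$.

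First I would lift the basic sets: for each sort $A_i$ of $\mc{A}$, set $M(A_i):=M(B_{i_1})\times\cdots\times M(B_{i_{m_i}})$, which is a basic set of $\mc{M}$, and set $Z_i:=Y_{i_1}\times\cdots\times Y_{i_{m_i}}\subseteq M(A_i)$. Since each $Y_{i_k}$ is e-definable in $\mc{M}$, so is $Z_i$ (one simply conjoins the defining systems on disjoint blocks of variables). The componentwise map $\tilde\psi_i=(\psi_{i_1},\dots,\psi_{i_{m_i}})\colon Z_i\to B(A_i)$ is well-defined and surjective. I would then define $X_i':=\tilde\psi_i^{-1}(X_i)$ and $\phi_i':=\phi_i\circ\tilde\psi_i|_{X_i'}\colon X_i'\to A_i$, and verify that $\boldsymbol{\phi}'=(\phi_i')$ satisfies the two clauses of Definition \ref{interpDfn}.

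Both verifications reduce to a single translation step supplied by Lemma \ref{equation_reduction}: any system of equations $\Sigma(\mathbf{x})$ in $\mc{B}$ (possibly with coefficients from $\mc{B}$) can be converted into a system $\Sigma^{\psi}(\mathbf{x}')$ in $\mc{M}$ such that, for any tuple $\mathbf{b}$ in the appropriate product of sorts of $\mc{B}$ and any preimage $\mathbf{y}\in Z$ with $\tilde\psi(\mathbf{y})=\mathbf{b}$, one has $\mathbf{b}\models\Sigma$ in $\mc{B}$ iff $\Sigma^{\psi}(\mathbf{y},\cdot)$ has a solution in $\mc{M}$. Apply this to the defining system of $X_i$ in $\mc{B}$ and conjoin with the defining system of $Z_i$: this e-defines $X_i'$ in $\mc{M}$, giving clause~(1). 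For clause~(2), given any function symbol $f$ or relation symbol $r$ of $\mc{A}$, the preimage of its graph under $\boldsymbol{\phi}$ is e-definable in $\mc{B}$ by hypothesis; applying the translation to that defining system produces a system in $\mc{M}$ that e-defines the preimage of the same graph under $\boldsymbol{\phi}'$, because $\boldsymbol{\phi}'=\boldsymbol{\phi}\circ\tilde{\boldsymbol{\psi}}$ and $\tilde{\boldsymbol{\psi}}$ respects the translation by construction.

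The main work is thus concentrated in Lemma \ref{equation_reduction}, which I would expect to be proved by induction on term complexity: each function symbol of $\mc{B}$ has its graph e-defined in $\mc{M}$ via $\boldsymbol{\psi}$, so every subterm $f(\tau_1,\dots,\tau_k)$ appearing in $\Sigma$ is replaced by a fresh variable $u$ of the appropriate sort together with the defining system for $u=f(\tau_1,\dots,\tau_k)$ pulled back to $\mc{M}$. Equalities and relations are handled identically. The only genuine subtlety to watch for is the treatment of coefficients: constants $b\in B_j$ appearing in $\Sigma$ must be replaced by chosen representatives $y\in\psi_j^{-1}(b)\subseteq Y_j$, which is legitimate precisely because our convention (Subsection~\ref{Dioph_pblms_intro}) allows arbitrary coefficients from the ambient structure. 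Once this bookkeeping is in place, transitivity drops out with no further computation.
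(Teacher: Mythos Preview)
Your proposal is correct and takes essentially the same approach as the paper: both derive transitivity from Lemma~\ref{equation_reduction}, with your write-up spelling out explicitly the composite interpretation $\boldsymbol{\phi}'=\boldsymbol{\phi}\circ\tilde{\boldsymbol{\psi}}$ and the translation of defining systems that the paper leaves implicit. Your sketch of Lemma~\ref{equation_reduction} via unnesting of terms also matches the paper's argument.
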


\begin{proposition}[\normalfont{Reduction of Diophantine problems}]\label{Diophantine_reduction}
Let $\mc{A}$ and $\mc{M}$ be (possibly multi-sorted) finitely generated structures such that $\mc{A}$  is e-interpretable in $\mc{M}$.  Then $\mc{D}(\mc{A})$  is Karp-reducible to $\mc{D}(\mc{M})$. As a consequence, if  $\mc{D}(\mc{A})$ is undecidable, then so is $\mc{D}(\mc{M})$.  

Similarly, the first-order theory of $\mc{A}$  is  Karp-reducible to the first-order theory (with constants\footnote{The considerations made regarding the use of constants in systems of equations and Diophantine problems apply as well for first-order formulas and their decidability problems (see Paragraph 3 of Subsection \ref{p: notation} or Subsection \ref{Dioph_pblms_intro}).}) of $\mc{M}$, and the second is undecidable if the first is.
\end{proposition}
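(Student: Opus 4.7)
The plan is to give a computable translation $S\mapsto S^{\boldsymbol{\phi}}$ from systems of equations in $\mc{A}$ to systems of equations in $\mc{M}$ such that $S$ has a solution in $\mc{A}$ iff $S^{\boldsymbol{\phi}}$ has a solution in $\mc{M}$. Given such a translation, any algorithm solving $\mc{D}(\mc{M})$ yields one for $\mc{D}(\mc{A})$ by first computing $S^{\boldsymbol{\phi}}$ and then calling the oracle; in particular, undecidability of $\mc{D}(\mc{A})$ transfers to $\mc{D}(\mc{M})$. The first order theory statement will follow by a parallel inductive extension of this translation to arbitrary formulas.

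To construct $S^{\boldsymbol{\phi}}$, I would fix the data of the e-interpretation: defining systems $\Sigma_i(\mathbf{x},\mathbf{y})$ for the domains $X_i\subseteq M(A_i)$; defining systems $\Sigma_f$ and $\Sigma_r$ for the preimages under $\boldsymbol{\phi}$ of the graphs of each signature function $f$ and relation $r$ (equality on every sort included); and for each coefficient $c\in A_i$ appearing in $S$, a fixed preimage $\widetilde{c}\in\phi_i^{-1}(c)$ to be used as a coefficient in $\mc{M}$. Then I replace each variable $x_k$ of $S$ (say taking values in sort $A_{i_k}$) by a fresh tuple $\mathbf{x}_k$ ranging over $M(A_{i_k})$, and process each atomic equation by recursion on term complexity: a compound subterm $f(\tau_1,\dots,\tau_m)$ is encoded by introducing fresh output tuples $\mathbf{z}_1,\dots,\mathbf{z}_m,\mathbf{z}$ together with $\Sigma_f(\mathbf{z}_1,\dots,\mathbf{z}_m,\mathbf{z},\mathbf{w})$; each coefficient $c$ is replaced by $\widetilde c$; and an outermost atomic formula $r(\tau_1,\dots,\tau_k)$ is encoded by appending $\Sigma_r$ applied to the output tuples of the $\tau_j$. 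Finally, I conjoin $\Sigma_{i_k}(\mathbf{x}_k,\mathbf{y}_k)$ for every $k$ to force each $\mathbf{x}_k$ to lie in $X_{i_k}$. This term-level recursion is exactly the content of Lemma~\ref{equation_reduction}, which I would invoke rather than redo.

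Correctness is a mutual lift. A solution $(a_1,\dots,a_n)$ of $S$ in $\mc{A}$ lifts by picking any $\mathbf{a}_k\in\phi_{i_k}^{-1}(a_k)$; the e-definability of the $X_i$ and of the graphs of the relevant functions and relations then supplies witnesses for every auxiliary variable introduced above, giving a solution of $S^{\boldsymbol{\phi}}$ in $\mc{M}$. Conversely, any solution of $S^{\boldsymbol{\phi}}$ projects under $\boldsymbol{\phi}$ to a solution of $S$ in $\mc{A}$, precisely because each $\Sigma_f$ and $\Sigma_r$ e-defines the preimage of the graph it encodes.

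For the first order theory part, extend the translation inductively to arbitrary formulas: $\wedge,\vee,\neg$ map to themselves, $\exists x\,\varphi$ maps to $\exists\mathbf{x}\,(\Sigma_{i_x}(\mathbf{x},\mathbf{y})\wedge\varphi^{\boldsymbol{\phi}})$ (with $\mathbf{y}$ existentially bound), and a negated atomic formula translates to the negation of its positive translation. Because the resulting formulas in $\mc{M}$ use the preimages $\widetilde{c}$ of the coefficients of the input formula (and the constants appearing in the e-interpretation itself), the reduction lands in the first order theory of $\mc{M}$ \emph{with constants}, which explains the parenthetical in the statement. I expect the main obstacle to be notational rather than conceptual: managing fresh auxiliary variables cleanly and verifying that the choice of a particular preimage $\widetilde c$ for each coefficient does not matter. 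The latter reduces to the e-definability of equality on each $X_i$, and both points are routine once Lemma~\ref{equation_reduction} is in hand.
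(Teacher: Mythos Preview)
Your proposal is correct and follows essentially the same route as the paper: the paper derives Proposition~\ref{Diophantine_reduction} (and Proposition~\ref{interpretation_transitivity}) directly from Lemma~\ref{equation_reduction}, which is exactly the translation $S\mapsto S^{\boldsymbol{\phi}}$ you describe and explicitly invoke. Your treatment is more detailed than the paper's one-line appeal to that lemma (in particular you spell out the handling of coefficients and the inductive extension to arbitrary first-order formulas), but the underlying argument is the same.
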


Both Propositions \ref{interpretation_transitivity} and  \ref{Diophantine_reduction} are consequences of the following lemma, which states in technical terms that if one structure is e-interpretable in the other, then one may ``express'' equations in the first as systems of equations in the second. Similar results to this with analogous proof are well-known. We include a proof for completeness.
\begin{lemma}\label{equation_reduction}
Let $\boldsymbol{\phi}=(\phi_1, \dots)$  be an e-interpretation of a multi-sorted structure $\mathcal{A}=(A_1, \dots; f,\dots, r, \dots)$ in another multi-sorted structure $\mc{M}$,
with $\phi_{i}: X_i \subset M(A_i)\to A_i$ (see Definition \ref{interpDfn}). Let $\sigma(\mb{x})=\sigma(x_1, \dots, x_n)$ be an arbitrary system of equations in $\mc{A}$ with each variable $x_i$ taking values in $A_{j_i}$. Then there exists a system of equations $\Sigma_\sigma(\mb{y}_1, \dots, \mb{y}_{n})$ in $\mc{M}$, such that each tuple of variables $\mb{y}_i$ takes values in $M(A_{j_i})$, and  such that a tuple $(\mb{b}_1, \dots, \mb{b}_n)\in \prod_{i=1}^n M(A_{j_i})$ is a solution to  $\Sigma_\sigma(\mb{y}_1,\ldots,\mb{y}_n)$ if and only if $(\mb{b}_1, \dots, \mb{b}_n)\in \prod_{i=1}^n X_{j_i}$ and $(\phi_{j_1}(\mb{b}_1), \dots, \phi_{j_n}(\mb{b}_n))$ is  a solution to $\sigma$.
\end{lemma}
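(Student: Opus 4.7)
The plan is to proceed by structural induction, first on subterms and then on atomic equations, producing the system $\Sigma_\sigma$ by glueing together the defining systems supplied by the e-interpretation.

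First, I would reduce to the atomic case. The system $\sigma$ is a finite conjunction of atomic equations of the form $r(\tau_1, \dots, \tau_k)$ (with $r$ typically equality). Since a conjunction of systems of equations is again a system of equations, it suffices to construct a translation for a single atomic equation, sharing the variables $\mathbf{y}_1, \dots, \mathbf{y}_n$ across the conjuncts.

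Next, I would handle terms. To each subterm $\tau$ of $\sigma$ taking values in a sort $A_{j(\tau)}$, associate a fresh tuple of variables $\mathbf{z}_\tau$ in $\mathcal{M}$ of the sort $M(A_{j(\tau)})$, intended to represent a $\boldsymbol{\phi}$-preimage of the value of $\tau$. The base cases are: if $\tau$ is the variable $x_i$, set $\mathbf{z}_\tau := \mathbf{y}_i$; if $\tau$ is a constant $c$ of $\mc{A}$, fix once and for all a preimage $\mathbf{z}_c \in X_{j(c)}$ with $\phi_{j(c)}(\mathbf{z}_c)=c$ (this exists by surjectivity of $\phi_{j(c)}$) and use it as a coefficient of $\Sigma_\sigma$; if $\tau$ is a constant of $\mc{A}$ that appears as a coefficient of $\sigma$ but is not in the signature, do the same. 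For an inductive step, if $\tau = f(\tau_1', \dots, \tau_m')$ for a signature function $f$, append to $\Sigma_\sigma$ the system of equations that e-defines the $\boldsymbol{\phi}$-preimage of the graph of $f$, evaluated at $(\mathbf{z}_{\tau_1'}, \dots, \mathbf{z}_{\tau_m'}, \mathbf{z}_\tau)$; this exists by Definition \ref{interpDfn}.2. For every subterm $\tau$ also append the system $\Sigma_{X_{j(\tau)}}(\mathbf{z}_\tau)$ e-defining $X_{j(\tau)}$ in $\mc{M}$ (Definition \ref{interpDfn}.1).

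Finally, for the atomic equation $r(\tau_1, \dots, \tau_k)$ itself, append the system that e-defines the $\boldsymbol{\phi}$-preimage of the graph of $r$ in $\mc{M}$, evaluated at $(\mathbf{z}_{\tau_1}, \dots, \mathbf{z}_{\tau_k})$; for $r$ the equality relation on $A_j$ this is the system defining the equivalence relation induced by $\phi_j$ on $X_j$, which exists as part of the data of the e-interpretation. Let $\Sigma_\sigma(\mathbf{y}_1, \dots, \mathbf{y}_n)$ be the resulting conjunction, with the auxiliary tuples $\mathbf{z}_\tau$ for non-leaf subterms treated as additional existentially quantified variables of $\Sigma_\sigma$ (permitted since we are defining solvability by equations). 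Correctness is a routine induction: given a solution of $\sigma$ in $\mc{A}$, pick any preimages in the $X_{j(\tau)}$ for the intermediate subterm values, and all component defining systems are satisfiable by hypothesis; conversely, any solution of $\Sigma_\sigma$ forces each $\mathbf{b}_i \in X_{j_i}$ via the $\Sigma_{X_{j(\tau)}}$ constraints, and then the $f$-preimage systems propagate $\phi_{j(\tau)}(\mathbf{z}_\tau)$ to the value of $\tau$ under $(\phi_{j_1}(\mathbf{b}_1), \dots, \phi_{j_n}(\mathbf{b}_n))$, so the $r$-preimage system forces $\sigma$ to hold.

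The only real subtlety is notational bookkeeping of fresh auxiliary variables for nested subterms and the treatment of arbitrary (non-signature) constants of $\mc{A}$ appearing as coefficients; both are handled by the convention, stated in Subsection \ref{Dioph_pblms_intro}, that equations may use arbitrary elements as coefficients, together with the surjectivity of the $\phi_i$.
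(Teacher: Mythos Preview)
Your proof is correct and follows essentially the same approach the paper has in mind: the paper's own proof simply cites Theorem 5.3.2 of Hodges and notes that the key step is to rewrite $\sigma$ with all atomic subformulas unnested, which is precisely what your introduction of the fresh tuples $\mathbf{z}_\tau$ for each subterm accomplishes. You have spelled out in detail what the paper leaves as a reference.
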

\begin{proof}
 We claim that, by introducing new variables and new equations, we can rewrite $\sigma$ so that $\sigma$ consists in a conjunction of equations $\sigma_1\wedge\dots \wedge \sigma_m$ such that the following holds: For all $i=1, \dots, m$, $\sigma_i$ is either of the form $z=f(x_1, \dotsm x_n)$,  $r(x_1,\dots, x_n)$, or $z=a$, where $f$ is some function from $\mc{A}$, the symbol $r$ is some relation from $\mc{A}$, the symbols $x_1,\dots, x_n, z$ are variables, and $a$ is any element from the sorts of $\mc{A}$.  The lemma follows from the claim, since by the definition of e-interpretability, the present lemma is true for each of $\sigma_i$, $i=1, \dots, m$. Hence, it suffices to take $\Sigma_\sigma$ to  be  $\Sigma_{\sigma_1}\wedge\dots\wedge\Sigma_{\sigma_n}$.

 We now prove the claim. We proceed by induction on the syntactic length $|\sigma|$ of $\sigma$, the base cases being clear. We can assume that $\sigma$ consists on a single equation. Suppose first that $\sigma$ does not have the desired form and that it is of the form $z=f(t_1,\dots, t_m)$ for some variable $z$, some function $f$, and some terms $t_1,\dots, t_{m}$ depending on some variables $x_1, \dots, x_k$.  We can rewrite $\sigma$ into the equivalent system of equations $z=f(y_{1},\dots, y_{m}) \wedge y_1=t_1(x_1,\dots, x_k) \wedge \dots \wedge y_{m}=t_{m}(x_1, \dots, x_k)$, where $y_1, \dots, y_m$ are new variables. The syntactic length of each one of these equations is strictly less than $|\sigma|$, and then we can proceed by induction.  If $\sigma$ has the form $r(t_1, \dots, t_m)$ for some relation $r$ and some terms $t_1, \dots, t_m$, we can proceed similarly by rewriting it as $r(y_1, \dots, y_m)\wedge\wedge y_1=t_1(x_1,\dots, x_k) \wedge \dots \wedge y_{m}=t_{m}(x_1, \dots, x_k)$. Finally if $\sigma$ has  the  form $f(t_1, \dots, t_m) = g(t_1', \dots, t_{k}')$ for some functions $f,g$ and terms $t_1, \dots, t_m, t_1', \dots, t_k'$, we can  rewrite sigma in the form $z = f(t_1, \dots, t_m) \wedge z= g(t_1', \dots, t_{k}')$. Each one of the equations in the conjunction has syntactic length smaller than $|\sigma|$, and again we can proceed by induction. This proves the claim.
\end{proof}

We will use the following observations in different occasions:

\begin{remark}\label{r: obserbation_interpretations}
Let $\mc{A}$, $\mc{B}$, and $\mc{M}$ be  (possibly multi-sorted) structures. Suppose that all sorts among the sorts of $\mc{A}$ and $\mc{B}$ are pairwise disjoint. Let $\mc{N}$ be a (possibly multi-sorted) structure which is the result of adding functions, relations, constants, or more sorts to $\mc{M}$.  Suppose $\mc{A}$ is e-interpretable in $\mc{M}$.  Then $\mc{A}$ is also e-interpretable in $\mc{N}$.

Moreover, if both $\mc{A}$ and $\mc{B}$ are e-interpretable in $\mc{M}$, then the multi-sorted structure $(\mc{A}, \mc{B})$  is also e-interpretable in $\mc{M}$.
\end{remark}

\subsection{The Diophantine problem in finitely generated associative commutative unitary rings}

 Recall that, given two associative commutative unitary rings $R, S$ with $S\subseteq R$, and an element $r\in R$, we say that $r$ is \emph{integral} over $S$ if there exists a monic polynomial $p(x)\in S[x]$ such that $p(r)=0$.  The \emph{integral closure} of $S$ in $R$ is defined as the subset of integral elements of $R$, and it forms a subring of $R$.  We will often denote it $O_R$   when the ring $S$ is understood from the context, and we will refer to $O_R$ as the \emph{ring of integers of $R$.}

A \emph{number field} is a finite field extension of $\mbb{Q}$. Given a field $k$, by $k(t)$ we denote the set of rational functions with coefficients in $k$ and variable $t$. The integral closure of $\mbb{Z}$ in a number field $K$  is called a \emph{ring of algebraic integers}. 
 A \emph{global function field} is a finite field extension of $\mbb{F}_p(t)$ for some finite field $\mbb{F}_p$ with a prime number $p$ of elements. 
 
Shlapentokh proved that the Diophantine problem is undecidable in any ring of integers of a global function field (see 10.6.2 from \cite{Shla_book}).  On the other hand, it is conjectured that the same is true for any ring of algebraic integers.

In the PhD thesis \cite[Theorem 7.1]{phd_eisentrager} Eisentraeger proved the  following relation between  the Diophantine problem of any infinite finitely generated commutative associative unitary ring, and the Diophantine problem of rings of integers of number or global function fields. Recall that the \emph{characteristic} of a ring with unity is the minimum positive integer $n$ such that $1 + \overset{n}{\dots} + 1 = 0$.

\begin{theorem}[Theorem 7.1 in \cite{phd_eisentrager}]\label{t: phd_eisentrager}
Let $A$ be an infinite finitely generated associative commutative unitary ring.
\begin{enumerate}
\item Assume that the characteristic of $A$ is n $>0$. If $A$ has infinitely many elements, then
Hilbert’s Tenth Problem for A is undecidable.
\item  Assume $A$ has characteristic zero. If the Krull dimension of $A$ is at least $2$, then $\mc{D}(A;\Lr)$ is undecidable. If we assume that $\mc{D}(O_K;\Lr)$ is undecidable for any ring of algebraic integers $O_K$ of integers of number fields is undecidable, then $\mc{D}(A;\Lr)$ is undecidable.
\end{enumerate}
\end{theorem}

We are interested in a variation, slightly stronger, formulation of this result, which we state below. The proof of this alternative formulation can be obtained from the proof of Theorem \ref{t: phd_eisentrager} \cite{phd_eisentrager} by making straightforward modifications. For completeness, we include an alternative yet similar  proof in the appendix at the end of this paper.

We   need the notion of \emph{rank} of an abelian group, and by extension of a ring. This is defined in a variety of manners throughout the literature. Here  we   follow \cite{Fuchs}.   
\begin{definition}[\cite{Fuchs}]\label{l: trank}
The \emph{rank} of an abelian group $A$ is  the maximal number of nonzero elements $a_1,\ldots,a_n\in A$ such that whenever $\alpha_1 a_1 + \dots + \alpha_n a_i=0$ for some integers $\alpha_1, \dots, \alpha_n\in \mbb{Z}$, then $\alpha_i a_i=0$ for all $i=1, \dots, n$. 

The \emph{rank} of a ring is defined as the rank of $R$ seen as an abelian group (i.e.\ forgetting its multiplication operation). 
\end{definition}

\begin{remark}\label{r: trank_domains}
 Let $R$ be an integral domain. If $R$ has zero characteristic then the rank of $R$ coincides with the \emph{dimension} of $R$ seen as a $\mbb{Z}$-module, which is the maximum number of $\mbb{Z}$-linearly independent elements in $R$, i.e.\ elements $a_1, \dots, a_r$ such that whenever $\sum_{i=1}^r \alpha_i a_i=0$ for some integers $\alpha_i$, we have $\alpha_i=0$ for all $i=1, \dots, r$. If $R$ has positive characteristic $p>0$, then the rank of $R$ is the dimension of $R$ as a $\mbb{F}_p$-vector space.
\end{remark}

Hence the notion of rank generalizes   dimension of $\mbb{Z}$-modules and of vector spaces. As an  example we have that the rank of the non-integral domain  $R=\mbb{Z}[x]/(px)$ is infinite. However, note that  $R$   has only one linearly independent element over $\mbb{Z}$, hence $R$ seen as a $\mbb{Z}$-module has dimension $1$. On the other hand, $R$ does not admit the structure of a vector space over a finite field. Another illustrative example is given by the integral domain $\mbb{Z}[\frac{1}{2}]$, which has rank $1$.

\begin{theorem}\label{t: f_g_commutative_rings}

Let $R$ be an infinite finitely generated commutative ring with identity.
Then there exists a ring of integers $O$ of a number or a global function field such that $(O;\Lr)$ is e-interpretable in $(R;\Lr)$,  and $\mc{D}(O;\Lr)$ is Karp-reducible to $\mc{D}(R;\Lr)$. Moreover, one of the following holds:

\begin{enumerate}
    \item If $R$ has positive characteristic $p> 0$, then the following holds: $O$ is the ring of integers of a global function field; the ring of polynomials  $(\mbb{F}_p[t];\Lr)$ is e-interpretable in $(R;\Lr)$ for some variable $t$; and $\mc{D}(R;\Lr)$ is undecidable.
    \item  If  $R$ has zero characteristic and it has infinite rank then the same conclusions as above hold:   $O$ is the ring of integers of a global function field; the ring of polynomials  $(\mbb{F}_p[t];\Lr)$ is e-interpretable in $(R;\Lr)$ for some prime $p$ and variable $t$; and $\mc{D}(R;\Lr)$ is undecidable. 
    \item  If  $R$ has zero characteristic and it has finite rank $n$, then $O$ is a ring of algebraic integers, and $\mc{D}(R;\Lr)$ is undecidable provided that $\mc{D}(O;\Lr)$ is undecidable. Additionally, $K$ is a field extension of $\mbb{Q}$ of degree at most $n$. 
\end{enumerate}
\end{theorem}

\begin{proof}
See Appendix \ref{appendix}.
\end{proof}

\subsection{Notation and conventions}\label{p: notation}

Here we note and emphasize some relevant aspects of the notation used in the paper.


\paragraph{1.} Unless stated otherwise, all rings and algebras are not necessarily associative, commutative, or  unitary. 

Given an  algebra $R$ over a ring $\L$, and a subset $S\subseteq R$, we let     $\langle S\rangle_\L$ be  the left $\L$-submodule of $R$ generated by a set $S$. We also let $R^2=\langle  xy\mid x,y\in R\rangle_\L$.

All modules are assumed to be \emph{left} modules over commutative associative unitary rings. Similarly, the underlying module of an algebra is assumed to be a left module over commutative associative unitary rings. All arguments work in the same way if we replace all left modules by right modules or all left module by bimodules.

\paragraph{3.} The language of additive groups is $\Lg=(+)$. The language of  $\L$-modules is $\La=(\Lg, \cdot \L)$, where the $\cdot \L=\{\cdot\l\mid \l \in \L\}$ are unary functions representing multiplication by scalars: $\cdot_\l(x)=\l x$. The language of rings  $\Lr$ is  $(+,\cdot)$. The language of $\L$-algebras is $\Lal=(+,\cdot,\cdot\L)$. If $\L$  admits a finite generating set $S$, then one can replace $\cdot \L$ by $\cdot S= \{\cdot_\lambda \mid \l \in S\}$ without loss of generality.

Hence, in an equation (or in a formula) over a $\L$-module or $\L$-algebra $R$, one is allowed to multiply any element of $R$ by any constant element of $\L$. But this is as far as one can involve $\L$: no variable can take values in $\L$, no quantification over $\L$ can be made, etc.

\paragraph{4.} The notion of $\mbb{Z}$-module or  $\mbb{Z}$-algebra with the languages above is equivalent, for the purposes of studying decidability of the Diophantine problem, to the notion of abelian group or ring, respectively.

\paragraph{5.}   Sometimes we will want to look at an algebra $L$ over a ring $\L$ \emph{as a $\L$-module}, or \emph{as a ring}, or \emph{as a group}, forgetting about the corresponding additional operations of $L$. We will use the notation $(L;\La)$, $(L;\Lr)$, $(L;\Lg)$ when this is done, respectively.   We will also write $(L;\Lal)$ to emphasize that $L$ is considered with all its $\L$-algebra operations. A similar terminology will be used for other structures such as rings and modules.

This notation will be used extensively in expressions of the type \emph{$(L;\mc{L}_1)$ is e-interpretable in $(K;\mc{L}_2)$}. This  means that the structure $L$ with  the operations of the language $\mc{L}_1$  is e-interpretable in $K$ considered with the operations of $\mc{L}_2$. In the particular case that $\mc{L}_1=\mc{L}_2$ we will also say that \emph{$L$ is e-interpretable in $K$ in the language $\mc{L}_1$}.

\section{From bilinear maps to commutative rings and algebras}\label{s: bilinear_maps}

A brief description of the arguments used in this section can be found in the last part of the introduction.

\subsection{Ring of scalars of a full non-degenerate bilinear map}\label{s: rs_full_nonfegenerate_bilin}

Throughout this subsection, $\L$ denotes an associative, commutative, unitary ring, possibly infinitely generated.

A map $f: N\times N \to M$ between   $\Lambda$-modules $N$ and $M$ is \emph{$\Lambda$-bilinear} if, for all $a\in N$, the maps $\ell_a: N\to M$ and $r_a: N \to M$ defined as $\ell_a(b)=f(a,b)$ and $r_a(b)=f(b, a)$ are  homomorphisms of $\Lambda$-modules. 
We call $f$ \emph{non-degenerate} if whenever $f(a,x)=0$ for all $x\in N$, we have $a=0$, and   also whenever $f(x,a)=0$ for all $x\in N$ we have $a=0$. The map  $f$ is called \emph{full} if the $\Lambda$-submodule generated by the image of $f$ is the whole $M$.

The set of module endomorphisms of a  $\Lambda$-module $N$, denoted $\End(N)$, forms an associative unitary $\Lambda$-algebra once we equip it with the operations of addition and composition (henceforth called \emph{multiplication}). Given  $\alpha\in \End(N)$ and $x\in N$,  we   write $\alpha x$ instead of $\alpha(x)$. An action of a ring  $\Delta$ on $N$ is  a ring homomorphism $\phi: \Delta \to \End(N)$. Any such action $\phi$ endows $N$ with a structure of $\Delta$-module. The action is called \emph{faithful} if $\phi$ is an embedding.

\begin{definition}
Let $\L$ be a commutative associative unitary ring, let $N$ and $M$  be $\Lambda$-modules, and let $f:N \times N \to M$ be a $\Lambda$-bilinear map between $N$ and $M$. A ring  $\D$  is called a \emph{ring of scalars}  of $f$ if it is associative, commutative, and unitary, and there exist faithful actions of $\D$ on $M$ and $N$ such that $f(\alpha x,y)=f(x,\alpha y)=\alpha f(x,y)$ for all $\alpha\in \Delta $ and all  $x,y\in N$.
\end{definition}

Since the actions of a ring of scalars  $\D$ of $f$ on $M$ and $N$ are faithful, there exist ring embeddings $\D\hookrightarrow \End(M)$ and $\D\hookrightarrow \End(N)$. For this reason and for convenience, we  always assume that a ring of scalars of $f$ is a subring of $\End(N)$. 
\begin{definition}\label{d: Rf}
We say that $\D$ is the \emph{largest} ring of scalars of $f$ if for any other ring of scalars $\D'$ of $f$, one has $\D'\leq \D$ as subrings of $\End(N)$. We denote such ring by $R(f)$.
\end{definition}

We will also need the following notation:
\begin{definition}\label{d: sym_and_Zsym}
Let $\L$ be an associative commutative unitary ring, and let $N$ be a $\L$-module. We define the following subsets of $\End(N)$:
\begin{align}
\Sym(f)&=\left\{ \alpha\in \End(N) \mid f(\alpha x,y)=f(x,\alpha y) \ \hbox{for all} \ x,y\in N\right\}, \label{e: Symf}\\ 
Z(\Sym(f))&=\left\{\alpha\in \Sym(f) \mid \alpha\beta=\beta\alpha\ \hbox{for all} \ \beta\in \Sym(f)\right\}.\label{e: ZSymf}
\end{align}
It is straightforward to check that both $\Sym(f)$ and $Z(\Sym(f))$ are $\L$-modules. 
\end{definition}

The next result was proved by the second author in \cite{Myasnikov1990}. We recover its proof since we will need to elaborate on it in the next subsection.

\begin{theorem}[\normalfont{cf.\ \cite{Myasnikov1990}}]\label{maxringexists}
Let $\L$ be an associative commutative unitary ring, and let $f:N\times N\to M$ be a full non-degenerate bilinear map between  $\L$-modules. Then the largest ring of scalars $R(f)$ of $f$  exists and is unique.
\end{theorem}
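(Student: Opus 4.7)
The plan is to exhibit $R(f)$ explicitly as an appropriate subset of $\End(N)$, verify directly that it is a commutative associative unitary subring of $\End(N)$ which is itself a ring of scalars of $f$, and then observe that by its very definition it swallows every other ring of scalars; uniqueness of a largest element is automatic.

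Concretely, I would define
\[
R(f) \;=\; \{\,\alpha \in \End(N) \mid \exists\, \bar\alpha \in \End(M) \text{ with } f(\alpha x, y) = f(x, \alpha y) = \bar\alpha f(x, y) \text{ for all } x, y \in N\,\}.
\]
Two preliminary observations: fullness of $f$ forces $\bar\alpha$ to be \emph{unique} given $\alpha$, since it is prescribed on the generating set of $M$ given by the image of $f$; and non-degeneracy forces the assignment $\alpha \mapsto \bar\alpha$ to be injective, since $\bar\alpha = 0$ yields $f(\alpha x, y) = 0$ for all $y$, whence $\alpha = 0$. Closure of $R(f)$ under sums and the presence of the identity are routine, taking $\overline{\alpha+\beta} = \bar\alpha + \bar\beta$ and $\bar 1 = 1_M$.

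The heart of the proof is showing $R(f)$ is commutative, from which closure under products will follow. For $\alpha, \beta \in R(f)$, I would evaluate $f(\alpha\beta x, y)$ in two ways: first passing $\alpha$ to the action on $M$ and then $\beta$ gives $\bar\alpha \bar\beta f(x, y)$; alternatively, using $\alpha$'s symmetry $f(\alpha z, y) = f(z, \alpha y)$ to obtain $f(\beta x, \alpha y)$ and then passing $\beta$ across gives $\bar\beta \bar\alpha f(x, y)$. Fullness then forces $\bar\alpha \bar\beta = \bar\beta \bar\alpha$ in $\End(M)$, and the analogous computation for $\beta\alpha$ yields $f(\beta\alpha x, y) = \bar\beta\bar\alpha f(x, y) = \bar\alpha\bar\beta f(x, y)$, whence $f((\alpha\beta - \beta\alpha)x, y) = 0$ for every $y$; non-degeneracy then gives $\alpha\beta = \beta\alpha$ in $\End(N)$. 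The same computation shows $\alpha\beta \in R(f)$ with $\overline{\alpha\beta} = \bar\alpha\bar\beta$, completing the verification that $R(f)$ is a commutative unitary subring of $\End(N)$.

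Finally, the action of $R(f)$ on $N$ by inclusion is tautologically faithful, and $\alpha \mapsto \bar\alpha$ is a faithful action on $M$ compatible with $f$ by construction, so $R(f)$ is a ring of scalars. For maximality, if $\Delta$ is any ring of scalars, then by definition each $\delta \in \Delta$ comes equipped with its action $\bar\delta$ on $M$ satisfying precisely the defining relation of $R(f)$, so $\Delta \subseteq R(f)$ as subrings of $\End(N)$, and uniqueness follows at once. The mildly delicate point I anticipate is the commutativity argument: a priori there is no obvious reason two arbitrary elements of $\End(N)$ whose actions extend compatibly to $M$ should commute, and the conclusion has to be teased out by playing fullness (to equate endomorphisms of $M$) against non-degeneracy (to equate endomorphisms of $N$).
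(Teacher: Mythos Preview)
Your proof is correct and arrives at the same object $R(f)$ as the paper, but the organization differs in a way worth noting. You define $R(f)$ in one stroke as the set of $\alpha\in\End(N)$ that are $f$-symmetric and admit a compatible extension $\bar\alpha\in\End(M)$, then establish commutativity directly via the two-way computation of $f(\alpha\beta x,y)$. The paper instead proceeds in layers: it first isolates $\Sym(f)=\{\alpha\mid f(\alpha x,y)=f(x,\alpha y)\}$, then its center $Z(\Sym(f))$, shows every ring of scalars lands in $Z(\Sym(f))$, and finally carves out $R(f)\subseteq Z(\Sym(f))$ by the well-definedness condition $\sum f(x_i,y_i)=0\Rightarrow\sum f(\alpha x_i,y_i)=0$ (which is exactly the existence of your $\bar\alpha$). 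Your packaging is arguably cleaner for this theorem in isolation; the paper's layered route is chosen because $\Sym(f)$ and $Z(\Sym(f))$ are the objects that get e-interpreted in the subsequent section (Theorem~\ref{maxringdefinable}), so introducing them here serves the later analysis. The underlying computations---fullness to identify endomorphisms of $M$, non-degeneracy to identify endomorphisms of $N$---are the same in both.
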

\begin{proof}
First observe that for all $\alpha_1,\alpha_2 \in Z(\Sym(f))$  and all $x,y\in N$,
$$
f(\alpha_1\alpha_2 x,y)=f(\alpha_2 x,\alpha_1 y)=f(x,\alpha_2\alpha_1 y)=f(x,\alpha_1\alpha_2 y),
$$
and thus $\alpha_1\alpha_2 \in \Sym(f)$. Since  both  $\alpha_1$ and $\alpha_2$ commute with any element from $\Sym(f)$, so does $\alpha_1 \alpha_2$. Hence, $\alpha_1\alpha_2\in Z(\Sym(f))$,  and so $Z(\Sym(f))$ is a $\L$-subalgebra of $\End(N)$.

Next, let $\D$ be an arbitrary ring of scalars of $f$. We will show $\D$ is a subring of $Z(\Sym(f))$. Indeed, by definition, $\D \subseteq \Sym(f)$. To see that $\D \subset Z(\Sym(f))$, let $\alpha\in \D$ and $\beta\in \Sym(f)$. Then, for all $x,y \in N$,
$$
f(\alpha\beta x,y)=\alpha f(\beta x,y)=\alpha f(x,\beta y)=f(\alpha x,\beta y)=f(\beta \alpha x,y).
$$ 
Hence
$
f((\alpha \beta-\beta \alpha)x,y)=0
$ for all $x, y \in N$.
Since $f$ is non-degenerate and $y$ is arbitrary, $(\alpha\beta-\beta \alpha)x=0$ for all $x\in N$. It follows that $\alpha\beta=\beta \alpha$,  and thus $\D\subseteq Z(\Sym(f))$.

By what we have seen so far, $Z(\Sym(f))$ is an associative commutative unitary $\L$-algebra that acts faithfully on $N$. We now wish to find a subring of $Z(\Sym(f))$, call it $\Theta$, that acts on $M$. Since $f$ is full, for all $z\in M$ we have $z=\sum_i f(x_i,y_i)$ for some $x_i, y_i \in N$. Hence, one may try to define the following action:
\begin{equation}\label{action_eqn}
\alpha z=\sum f(\alpha x_i,y_i) \quad \hbox{for} \quad \alpha \in \D.
\end{equation} 
However, this is not necessarily well-defined, because the same $z\in M$ may have different expressions as sums of elements $f(x_i, y_i)$. With this in mind, we let $\Theta$ be the set of all $\alpha\in Z(\Sym(f))$ such that
\begin{equation}\label{well_dfn_eqn}
\sum f(\alpha x_i,y_i) =\sum f(\alpha x_i', y_i') \quad \hbox{whenever} \quad \sum f(x_i,y_i)=\sum f(x_i',y_i').
\end{equation}
Clearly, $\Theta$ is closed under addition and multiplication, and therefore it is a subring of $Z(\Sym(f))$ with a well-defined action on $M$ given by \eqref{action_eqn}. Since the action of $\Theta$ on $N$ is faithful, and $f$ is a non-degenerate map, the  action of $\Theta$ on $M$ is faithful as well. It follows that $\Theta$ is a ring of scalars of $f$. Moreover, any other ring of scalars $\D$ of $f$ satisfies \eqref{well_dfn_eqn}, and thus, since $\D\subset Z(\Sym(f))$ by our previous argument, we have $\D \leq \Theta$. We conclude that $\Theta$ is the unique largest ring of scalars of $f$. 
\end{proof}

\begin{remark}
It is clear from the proof above that $R(f)$ is closed under multiplication by $\L$. Hence, $R(f)$ admits the structure of a $\L$-algebra. 
\end{remark}

\subsection{E-interpreting $Z(\Sym(f))$ and  the largest ring of scalars}

Throughout this subsection    $\L$ denotes a \emph{Noetherian} associative commutative unitary ring (possibly infinitely generated).  Recall that a ring is \emph{Noetherian} if for every infinite ascending chain of ideals $I_1\subseteq I_2 \subseteq \dots$ there exists $n$ such that $I_{n} = I_{m}$ for all $m\geq n$.  In this case, any finitely generated $\L$-module is Noetherian and finitely presented (see \cite{Eisenbud} or \cite{Goodearl_Warfield}). We refer to Subsection \ref{p: notation} for important notation and terminology conventions.

The goal of this subsection is to prove the following result.

\begin{theorem}\label{maxringdefinable}
Let $f: N \times N \to M$ be a full non-degenerate bilinear map between  finitely generated $\Lambda$-modules. Then both $Z(\Sym(f))$ (see Definition \ref{d: sym_and_Zsym}) and the largest ring of scalars $R(f)$ of $f$ are  finitely generated as $\L$-modules, and they are e-interpretable as $\L$-algebras in $F=(N,M;f, \La)$. Moreover,
\begin{enumerate}

\item $Z(\Sym(f))$, $R(f)$, $N$, and $M$ are all simultaneously   finite, or they are all simultaneously infinite.

\item If $\L$ is a field, then $(Z(\Sym(f));\Lr)$  is e-interpretable in $(N,M;f, \Lg)$ (i.e.\  multiplication by scalars is not required).

\item If $\L=\mbb{Z}$, then both $(Z(\Sym(f));\Lr)$ and $(R(f);\Lr)$ are e-interpretable in $(N,M;f, \Lg)$.

\end{enumerate}

\end{theorem}

We state some lemmas and observations before proving Theorem \ref{maxringdefinable}, starting with a useful description of $\End(N)$.

\begin{remark}\label{r: Ends_are_tuples}
Let $N$ be a  $\Lambda$-module with finite module presentation $\langle a_1, \dots, a_m \mid \sum_i x_{j,i} a_i$, $j=1,\ldots, T\rangle_\L$,   where $x_{j,i}\in \Lambda$ for all $i,j$.   Each element $\alpha$ of $\textit{End}_{\L}(N)$ uniquely determines an $m$-tuple $(\alpha a_1,\ldots, \alpha a_{m})\in N^{m}$, and one has $\sum_i x_{j,i} (\alpha a_i)=0$ for all $j$. Conversely, any $m$-tuple from $N^m$ with this last property determines an element from $\End(N)$.  Thus  $\End(N)$ can be identified with the set of $m$-tuples $(\alpha_1, \dots, \alpha_m) \in N^m$ that satisfy $\sum_i x_{j,i} \alpha_i=0$ for all $j$.  

In the particular case that $\L$ is a field we have that $N$ is a vector space. In particular, $N$ is a free $\L$-module, and so it admits a finite presentation with an empty set of relations. In this case, $\End(N)=N^{m'}$ for some $m' \leq m$.  Let us mention a particular case when $N$ is a free $\L$-module. In this case $N$ admits a finite presentation with an empty set of relations. In this case, $\End(N)=N^{m'}$ for some $m'\leq m$. This happens, for example, if $\L$ is a field or if $\L=\mbb{Z}$ and $N$ is torsion-free. 
\end{remark}

The above identification of $\End(N)$ with a subset of $N^m$ is used to prove the following result.

\begin{lemma}\label{End_is_interpretable}
Let $N$ be a finitely generated $\L$-module. Then the following hold:
\begin{enumerate}
\item $(\End(N); \La)$ is e-interpretable in $(N;\La)$.  

\item Let  $S_N=\{a_1, \dots, a_m\}$ be a generating set of $N$, and define maps $\cdot a_i: \End(N) \to N$ so that $\cdot a_i$ sends each $\alpha\in \End(N)$ to  $\alpha a_i \in N$. Denote $\cdot S_N= \{\cdot a_1, \dots, \cdot a_m\}$.  Then the two-sorted structure  $\textit{END}_{\L}(N)=\left( \End(N), N;  \cdot S_N, \La \right)$ is e-interpretable in $(N;\La)$.   

\item  In the particular case that $\L$ is a field or the ring of integers $\mbb{Z}$,  the previous statements are still valid after replacing $\La$ by $\mc{L}_{\it group}$ in all structures.
\end{enumerate}
\end{lemma}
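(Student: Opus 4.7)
The plan is to encode $\End(N)$ concretely as an e-definable subset of $N^m$ using the identification from Remark \ref{r: Ends_are_tuples}, and then to transport the $\L$-module structure of $\End(N)$ to componentwise operations on $N^m$.

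First I would fix a finite module presentation $\langle a_1,\dots,a_m \mid \sum_i x_{j,i}a_i=0,\ j=1,\dots,T\rangle_\L$ of $N$, which exists because $\L$ is Noetherian and $N$ is finitely generated. By the remark, $\alpha \mapsto (\alpha a_1,\dots,\alpha a_m)$ identifies $\End(N)$ with the set $X\subseteq N^m$ of tuples $(\alpha_1,\dots,\alpha_m)$ satisfying $\sum_i x_{j,i}\alpha_i=0$ for every $j$. Each defining condition is a single $\La$-equation in the variables $\alpha_1,\dots,\alpha_m$ taking values in $N$, with constant coefficients applied via the unary symbols $\cdot_{x_{j,i}}$; hence $X$ is e-definable in $(N;\La)$.

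Next I would transport the additive $\L$-module structure: zero becomes $(0,\dots,0)$, addition becomes componentwise addition, and each $\cdot_\l$ becomes componentwise application of $\cdot_\l$ on the $m$ coordinates. Each of these is e-defined by a short system of equations in $2m$ or $3m$ $N$-variables, proving item 1. For item 2, the evaluation $\cdot a_i:\End(N)\to N$ transports to the $i$-th coordinate projection $X\to N$, whose graph is the trivially e-definable set $\{(\alpha_1,\dots,\alpha_m,y)\in X\times N \mid y=\alpha_i\}$, so all the functions in $\cdot S_N$ are e-interpretable simultaneously with $X$ itself, yielding item 2.

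For item 3, I would split into two cases. If $\L=\mbb Z$, then each unary symbol $\cdot_n$ is a finite iterated addition or subtraction, so every $\La$-equation above rewrites as an $\Lg$-equation, and items 1 and 2 go through with $\La$ replaced by $\Lg$. If $\L$ is a field, then by Remark \ref{r: Ends_are_tuples} the presentation can be chosen with $T=0$; hence $X=N^m$ requires no defining equations, and the additive structure of $\End(N)=N^m$ is pure componentwise addition, visibly e-interpretable in $(N;\Lg)$.

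The only subtle point I anticipate is ensuring the field case of item 3 does not secretly invoke the symbols $\cdot_\l$, which are absent from $\Lg$ when $\L$ is an infinite field. Freeness of $N$ is precisely what rescues the argument: it eliminates the defining relations of $X$, and since item 3 only demands e-interpretability of $(\End(N);\Lg)$ (rather than $(\End(N);\La)$), no scalar-multiplication symbols need to appear in the interpretation of the endomorphisms viewed merely as an abelian group.
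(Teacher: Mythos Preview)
Your proposal is correct and follows essentially the same approach as the paper: identify $\End(N)$ with the e-definable subset $X\subseteq N^m$ cut out by the finitely many presentation relations, transport the $\L$-module operations componentwise, realize each $\cdot a_i$ as the $i$-th coordinate projection, and for item 3 use that $X=N^m$ when $\L$ is a field (so no scalar symbols appear in the defining equations) and that scalar multiplication is iterated addition when $\L=\mbb Z$. Your final paragraph about the field case anticipates exactly the same subtlety the paper addresses.
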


\begin{proof}
As mentioned above, since $\Lambda$ is a Noetherian associative commutative unitary ring, any finitely generated $\Lambda$-module is finitely presented with respect to any finite generating set. Let $\sum x_{j,i} a_i$, $j=1,\ldots, T$  be a finite set of relations of $N$, with $x_{j,i}\in \Lambda$ for all $i,j$.  

Following Remark \ref{r: Ends_are_tuples},  identify each element $\alpha$ of $\End(N)$ with the $m$-tuple $(\alpha_1,\ldots,\alpha_{m})=(\alpha a_1,\ldots, \alpha a_{m})\in N^{m}$. By this same remark, any $m$-tuple $\alpha=(\alpha_1,\ldots,\alpha_{m})\in N^m$ belongs to $\End(N)$ if and only if $\sum x_{j,i} \alpha_i=0$ for all $j$. This is a finite system of equations in $(N;\La)$ with variables $\alpha_i$, and so $\End(N)$ as a set  is e-definable  in $(N; \La)$. As observed in Remark \ref{r: Ends_are_tuples}, if $\L$ is a field then $\End(N)=N^{m'}$ for some $m'\leq m$, and so the e-definition consists in an empty equation. In particular, it does not use multiplication by scalars. Hence $\End(N)$ is e-definable as a set in $(N;\Lg)$. 

The group addition of two tuples from the $\L$-module $\End(N)$ is obtained  by component-wise addition. Hence the graph of the addition operation of $\End(N)$ (which is a subset of $N^{3m}$) is e-definable in $(N, \La)$. By similar reasons, so are the graphs of the equality relation of  $\End(N)$ and  of multiplication by fixed elements of $\Lambda$ (i.e.\ multiplication by scalars). This proves that  $(\End(N); \La)$ is e-interpretable in $(N; \La)$. In the case that $\L$ is a field,  $(\End(N);\Lg)$ is e-interpretable  in $(N; \mc{L}_{\it group})$.  

It follows that the two-sorted structure $\left(\End(N), N; \La \right)$ is e-interpretable in $(N; \La)$. Finally, notice that, for $\alpha = (\alpha_1, \dots, \alpha_m) \in \End(N)$ and $x\in N$, the tuple $(\alpha, x) =  N^{m} \times N$ belongs to the graph of $\cdot a_i$ if and only if $x= \alpha a_i = \alpha_i$. In other words, for any tuple $(y_1, \dots, y_{m+1}) \in N^{m+1}$ we have
$$
 (y_1, \dots, y_{m+1}) \in {\it Graph}(\cdot a_i) \subseteq N^{m+1} \quad \hbox{if and only if} \quad y_i=y_{m+1},
$$  
hence the graph of $\cdot a_i$ is e-definable in $(N; \Lg)$. This completes the proof that $\textit{END}_{\L}(N)$ is e-interpretable in $(N; \La)$. 

If $\L$ is a field then multiplication by scalars was not used in any equation other than when  e-interpreting the scalar multiplication of $\End(N)$. If $\L=\mbb{Z}$ then a $\L$-module is just a group, because  $nx= x+\overset{n}{\dots} + x$ for all $n\in \mbb Z$. Hence, Item 3 holds. 
\end{proof}

\begin{remark}\label{r: F_1}
%
It follows from Lemma \ref{End_is_interpretable} and Remark \ref{r: obserbation_interpretations} that there exists an e-interpretation $\boldsymbol{\phi}$ of the three-sorted structure 
$$
F_1=\left(\End(N), N, M;  f, \cdot S_N, \La \right)
$$ 
in $F=\left(N, M; f, \La \right)$.  If $\L$ is a field or $\mbb{Z}$, then one can replace $\La$ by $\Lg$.

Thus by transitivity of e-interpretations (Proposition \ref{interpretation_transitivity}), in order to prove that $(R(f); \Lal)$ or $(Z(\Sym(f)); \Lal)$ is  e-interpretable in $F$ it suffices to show  that it e-interpretable in $F_1$. For this one must keep in mind that an equation in $F_1$ can involve constants and variables from  $N$, $M$, and $\End(N)$; the  map $f$;  actions of endomorphisms on the $a_i$'s given by $\cdot S_N$; and the operations of $(N;\La)$, $(M;\La)$, and $(\End(N);\La)$ without its ring multiplication. For example, the equation $f(\alpha a_i,a_j)=f(a_i,\alpha a_j)$ on the variable $\alpha$  is valid in $F_1$, whereas $\alpha_1\alpha_2 a_i=\alpha_2\alpha_1  a_i$ or $\alpha x=a_i$  is not (for variables $\alpha_1, \alpha_2, \alpha \in \End(N)$, $x\in N$). 
\end{remark}

We next prove the main result of this subsection.

\begin{proof}[Proof of Theorem \ref{maxringdefinable}]
First observe that     $\End(N)$ is finitely generated as a $\L$-module, because $N^m$ is a Noetherian module and $\End(N)$  embeds as a $\L$-module into $N^m$, by Remark \ref{r: Ends_are_tuples}. By the same reason both $R(f)$ and $Z(\Sym(f))$  are finitely generated as  $\Lambda$-modules.  

Denote $F=(N,M;f,\La)$. We  proceed to prove that $(Z(\Sym(f));\Lal)$ is e-interpretable     in $F$.   By the previous Remark \ref{r: F_1},  it suffices to show that $(Z(\Sym(f));\Lal)$ is e-interpretable in $F_1$ for some generating set $S_N=\{a_1, \dots, a_n\}$ of $N$. 

We start by proving that $\Sym(f)$  can be e-defined as a subset of $\End(N)$ in $F_1$. Indeed, take any $x,y \in N$  and write $x=\sum x_ia_i$ and $y=\sum y_i a_i$ for some $x_i, y_i \in \Lambda$. Since $\alpha x = \sum x_i \alpha a_i$ for all $\alpha \in \End(N)$, we have  $f(\alpha x,y)=\sum x_i y_j f(\alpha a_i, a_j)$, and similarly for $f(x,\alpha y)$. It follows that
\begin{equation}\label{e: 2020}
\Sym(f)=\left\{ \alpha\in \End(N) \mid f(\alpha a_i,a_j)=f(a_i,\alpha a_j) \ \hbox{for all} \ 1\leq i, j \leq n \right\}.
\end{equation} 
Observe that $\alpha a_i=\cdot a_i(\alpha)$ for all $i=1, \dots, n$. Hence \eqref{e: 2020} can be written as a first-order sentence in $F_1$.  We conclude that $\Sym(f)$ is e-definable as a set in $F_1$  by the system of equations
\begin{equation}\label{Sym_dfn_system}
\bigwedge_{1\leq i, j \leq n} \big[ f\left(\cdot a_i(\alpha), a_j\right) = f(a_i, \cdot a_j(\alpha)) \big]
\end{equation}
on the single variable $\alpha$ taking values in  $\End(N)$. 
Note that \eqref{Sym_dfn_system} does not use multiplication by scalars $\L$.

Since the signature of $F_1$ contains all operations of    $(\End(N);\La)$, we have that   $\Sym(f)\leq \End(N)$ as a $\L$-module is e-interpretable in $F_1$.

Next, we show that $Z(\Sym(f))$  is e-definable as a set in $F_1$. As before,  this  immediately implies that the $\L$-module $(Z(\Sym(f)); \La)$ is e-interpretable in $F_1$. Let $\beta_1,\ldots,\beta_k$ be a finite generating set of $(\Sym(f); \La)$. Then, $\alpha\in Z(\Sym(f))$ if and only if $\alpha\in \Sym(f)$ and  $\alpha \beta_t=\beta_t \alpha$ for all $t=1, \dots, k$. This implies that 
\begin{equation}\label{aux_eqn}
f(\alpha a_i, \beta_t a_j) = f(a_i, \alpha \beta_t a_j) = f(a_i, \beta_t \alpha a_j) = f(\beta_t a_i, \alpha a_j) \quad \hbox{for all}\ i, j, t.
\end{equation}
We claim that \eqref{aux_eqn} is a sufficient condition for an endomorphism $\alpha \in \Sym(f)$ to belong to $Z(\Sym(f))$. As a consequence one has that $Z(\Sym(f))$ is definable as a set in $F_1$  by means of the following system  of equations on the variable $\alpha$:  
\begin{equation}\label{Z_Sym_f_definable}
 \bigwedge_{\substack{t=1,\dots, k,\\ 1\leq i,j\leq n}} \big[f(\cdot a_i(\alpha), \cdot a_j(\beta_t))=f(\cdot a_i(\beta_t), \cdot a_j(\alpha)) \big],
\end{equation}
together with the system \eqref{Sym_dfn_system}, which ensures that $\alpha\in \Sym(f)$. Again, recall that $\alpha a_i$ and $\beta_t a_j$ are written in $F_1$ in the form $\cdot a_i (\alpha)$ and $\cdot a_j(\beta_t)$. As before, \eqref{Z_Sym_f_definable} does not use multiplication by the scalars $\L$. 

To prove the claim, i.e.\ that \eqref{aux_eqn} is a sufficient condition for $\alpha\in \Sym(f)$ to belong to $Z(\Sym(f))$,  suppose \eqref{Z_Sym_f_definable} holds. Then
$
f(\beta_t\alpha a_i, a_j) = f(\alpha \beta_t a_i, a_j)
$
for all $i,j,t$, and thus, for fixed $i$ and $t$, $f([\beta_t, \alpha]a_i, a_j)=0$ for all $j$, where  $[\beta_t, \alpha] = \beta_t \alpha  - \alpha \beta_t$.  By bilinearity of $f$ and the fact that $a_1, \dots, a_n$ generate $N$, we have that $f([\beta_t, \alpha]a_i, x)=0$ for all $x\in N$ and for all $i, t$. Since $f$ is non-degenerate, $[\beta_t, \alpha]a_i =0$ for all $i, t$. This implies that $[\beta_t, \alpha]x=0$ for all $x \in N$, and thus $[\beta_t, \alpha]=0$  for all $t$. This completes the proof of the claim.

We have  seen that  the $\L$-module $(Z(\Sym(f));\La)$ is e-interpretable in $F_1$. Moreover, multiplication by scalars $\L$ is only used for defining the the scalar multiplication of $(Z(\Sym(f));\La)$. Hence in fact $Z(\Sym(f))$ as a group is e-interpretable in $\left(\End(N), N, M;  f, \cdot S_N, \Lg \right)$ (i.e.\ $F_1$ after replacing $\La$ by $\Lg$).

By analogous reasons as above, for any triple $\gamma_1, \gamma_2, \gamma_3 \in Z(\Sym(f))$ the equality $\gamma_3 = \gamma_1 \gamma_2$ holds if and only if
\begin{equation}\label{dfn_mult}
f(\gamma_3 a_i, a_j) =  f(\gamma_2 a_i, \gamma_1 a_j) \quad \hbox{for all}\ 1\leq i,j\leq n.
\end{equation}
Hence the ring multiplication of $Z(\Sym(f))$ is e-interpretable in $F_1$ by means of  \eqref{dfn_mult} and appropriate systems of the form \eqref{Sym_dfn_system} and \eqref{Z_Sym_f_definable} (which ensure that $\gamma_i \in Z(\Sym(f))$). We conclude that $Z(\Sym(f))$ as a $\L$-algebra is e-interpretable in $F_1$, and hence in $F=(N,M;f,\La)$.

We now prove Items 2 and 3 of the statement of the Theorem \ref{maxringdefinable}. As observed in the arguments above, multiplication by scalars of $F_1$ was only used in order to e-interpret  multiplication by scalars of $Z(\Sym(f))$. Hence  $(Z(\Sym(f));\Lr)$ is e-interpretable  in $\left(\End(N), N, M;  f, \cdot S_N, \Lg \right)$. By Lemma \ref{End_is_interpretable} and Remark \ref{r: F_1}, if $\L$ is either $\mbb{Z}$ or a field, then the latter structure is e-interpretable in $(M, M; f, \Lg)$. Hence  $(Z(\Sym(f));\Lr)$ is e-interpretable as a ring in $(M,M; f, \Lg)$. This concludes the proof of Items 2 and 3.
Next we show that $(R(f);\Lal)$ is e-interpretable in $(N,M;f,\La)$.
By the previous  arguments and by transitivity of e-interpretations, it suffices to prove  that $(R(f);\Lal)$  is e-interpretable in $F_1$. First recall from the proof of Theorem \ref{maxringexists} that $R(f)$ is the set of all $\alpha \in Z(\Sym(f))$ such that 
$$
\hbox{if}\ \ \sum_{i=1}^t f(x_i,y_i)=\sum_i f(x_i',y_i'),\ \ \hbox{then} \ \ \sum_{i=1}^{t} f(\alpha x_i,y_i)=\sum_i f(\alpha x_i',y_i'),
$$ 
for any $t\geq 1$  and elements $x_1, \dots, x_t, y_1, \dots, y_t, x_1', \dots, x_t', y_1,\dots, y_t'$ in $N^t$.
This condition is equivalent to
\begin{equation}\label{Af_final_condition}
\hbox{if} \  \ \sum_{i=1}^t f(x_i,y_i)=0, \ \ \hbox{then} \ \ \sum_{i=1}^t f(\alpha x_i,y_i)=0.
\end{equation}
We claim that $\alpha \in Z(\Sym(f))$ satisfies \eqref{Af_final_condition} if and only if it satisfies the following condition:  
\begin{equation}\label{Af_condition_2}
\hbox{if} \ \  \sum_{1\leq j,k \leq n} z_{j,k} f(a_j,a_k)=0 \ \ \hbox{for some} \ \ z_{j,k}\in \Lambda \ (1\leq j,k\leq n), \ \ \hbox{then} \ \ \sum_{1\leq j,k \leq n} z_{j,k} f(\alpha a_j,a_k)=0.
\end{equation} 
Indeed, the direct implication is immediate. Conversely, suppose  that $\alpha$ satisfies \eqref{Af_condition_2}, and let $t\geq 1$ and $x_1, \dots, x_t, y_1, \dots, y_t \in N$ be such that $\sum_{i=1}^t f(x_i, y_i)=0$. Write each $x_i$ and $y_i$ in terms of the generators  $a_1, \dots, a_n$, 
$$
x_i=\sum_{j=1}^n x_{i,j} a_j,\quad \hbox{and} \quad y_{i=1}^n=\sum_{k} y_{i,k} a_{k}, \quad x_{i,j}, y_{i,k}\in \Lambda.
$$ 
Since $f$ is bilinear, 
$$
\sum_{i=1}^t f(x_i,y_i)=\sum_{1\leq j,k \leq n} \left(\sum_{i=1}^t x_{i,j}y_{i,k}\right) f(a_j,a_{k}) =0.
$$
Thus by \eqref{Af_condition_2}, $$0=\sum_{1\leq j,k\leq n} \sum_{i=1}^t x_{i,j}y_{i,k}f(\alpha a_j, a_k)= \sum_{i=1}^t f(\alpha x_i, y_j).$$ This completes the proof of the claim.

The set $S$ of all tuples $(z_{i,j}) \in \L^{n^2}$ such that $\sum_{1\leq i,j\leq n} z_{i,j}f(a_i,a_j)=0$ forms a submodule of $\Lambda^{n^2}$, and so it admits a finite generating set, say $X=\{\mb{s}_i\mid i=1,\ldots,T\}$. Write  $\mb{s}_i=(s_{i,j,k} \mid 1\leq j,k\leq n)$. 
Then $\alpha \in Z(\Sym(f))$ belongs to $R(f)$ if and only if 
\begin{equation}
\sum_{1\leq j,k\leq n} \left( \sum_{i=1}^t q_i s_{i,j,k} \right) f(\alpha a_j, a_k)  = 0, \quad \hbox{for all} \quad q_1, \dots, q_t \in \Lambda.
\end{equation}
Equivalently,
\begin{equation}\label{aux_eq_3}
\sum_{i=1}^t q_i \left(\sum_{1\leq j,k\leq n} s_{i,j,k} f(\alpha a_j, a_k) \right) = 0, \quad \hbox{for all} \quad q_1, \dots, q_t \in \Lambda.
\end{equation}
By making appropriate choices for the $q_i$'s, one sees that \eqref{aux_eq_3} holds if and only if each one of the expressions inside the parenthesis is $0$. It follows that $R(f)$ is e-definable as a set  in $F_1$. Consequently, $R(f)$ is e-interpretable as a $\L$-algebra in 
$F_1$, since all the operations of $(R(f);\Lal)$ are already present in the signature of the latter. It follows that $R(f)$ is e-interpretable as a $\L$-algebra in $(N,M;f,\La)$. 

Finally we prove Item 1, i.e.\ that $Z(\Sym(f))$, $R(f)$, $N$, and $M$ are all simultaneously either  finite, or all simultaneously infinite. We first claim that, in general, if $\Theta$ is an  associative commutative unitary ring, then any finitely generated faithful $\Theta$-module $K$  is  infinite if and only if $\Theta$ is  infinite. Indeed, if $K$ is finite, then ${\it End}_{\Theta}(K)$ is finite  as well, because ${\it End}_{\Theta}(K)$ embeds as a $\Theta$-module into $K^n$, for some $n$ (see Remark \ref{r: Ends_are_tuples}). Since $K$ is a faithful $\Theta$-module, there exists an embedding $\Theta \hookrightarrow {\it End}_{\Theta}(K)$, and hence $\Theta$ is finite as well. On the other hand, if $K$ is infinite, then, since $K$ is finitely generated, there must exist  $k\in K$ such that the set $\{\theta k \mid \theta \in \Theta\}$ is infinite, hence $\Theta$ is infinite.   The claim follows.

Observe that both $N$ and $M$ are faithful $R(f)$-modules, and that $N$ is also a  faithful $Z(\Sym(f))$-module. We  claim that both these modules are finitely generated.  Indeed,  let $\L_N=\{\l \in \L \mid \l n =0 \ \hbox{for all} \ n\in N\}$, and define $\L_M$ analogously. Then $N$ (resp.\ $M$) is a finitely generated faithful $\L/\L_N$-module (resp.\ $\L/\L_M$-module). Using that $f$ is full and non-degenerate, one can see that $N$ is also a faithful $\L/\L_M$-module under the action $(\l + \L_M)x= \l x$. With this action, $\L/\L_M$ becomes a ring of scalars of $f$, and so by maximality of $R(f)$ we have $\L/\L_M\leq R(f)\leq Z(\Sym(f))$ as subrings of $\End(N)$. Similar arguments yield $\L/\L_N\leq R(f)\leq Z(\Sym(f))$. Since  $N$ is finitely generated as a $\L/\L_N$-module, it is also finitely generated  as a $R(f)$-module. Similarly, $N$ is finitely generated as  a $Z(\Sym(f))$-module, and $M$ is finitely generated as a $R(f)$-module. Observe also that all these modules are faithful since $Z(\Sym(f))$ and $R(f)$ embed in $\End(N)$ by construction. This completes the proof of the claim. Item 1 of the statement of the theorem follows now from  this and the previous claim. \end{proof}

\subsection{Arbitrary bilinear maps}\label{s: arbitrary_bilinear_maps}

In this subsection we keep the   assumption that $\L$ is a Noetherian associative commutative ring with identity (possibly infinitely generated).   Our next goal is to generalize Theorem \ref{maxringdefinable} to arbitrary bilinear maps. Given a map $\Lambda$-bilinear map between finitely generated $\L$-modules  $f:A\times B \to C$, we let  the left and right \emph{annihilators} of $f$ be, respectively,
\begin{align}\label{e: ann}
{\it Ann}_l(f)=\left\{a\in A \mid  f(a,y)=0 \ \hbox{for all} \ y\in B \right\},\nonumber\\ 
{\it Ann}_r(f)=\{b\in B \mid  f(x,b)=0 \ \hbox{for all} \ x\in A\}.\nonumber
\end{align}
\begin{theorem}\label{ringdefinablegen}
Let $f:A \times B\to C$ be a $\Lambda$-bilinear map between finitely generated $\L$-modules. Then there exists an associative, commutative, unitary $\L$-algebra  $\Theta$ such    that  $(\Theta;\Lal)$  is finitely generated as a $\L$-module and it is e-interpretable  in $F=(A,B,C;f, \La)$. Moreover, 
\begin{enumerate}

\item In case that $\L$ is a field or the ring $\mbb Z$, then $(\Theta; \Lr)$ is e-interpretable in $(A, B, C; f, \Lg)$. 

\item $\Theta$ is   infinite if and only if both $\L$-modules $\langle f(A,B) \rangle_\L$ and $A/\Ann_l(f) \times B/\Ann_r(f)$ are  infinite, respectively. Here $\langle f(A,B) \rangle_\L$ denotes the $\L$-submodule of $C$ generated by the set $\{f(a,b) \mid a\in A, b\in B\}$. 
\end{enumerate}
\end{theorem}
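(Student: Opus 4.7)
The plan is to reduce to the setting of Theorem \ref{maxringdefinable} by modifying $f$ into a full non-degenerate bilinear map between two copies of a single module, preserving e-interpretability at every step. The reduction has three moves: restrict the codomain to the $\L$-submodule $M = \langle f(A,B)\rangle_\L$ of $C$, kill the left/right annihilators of $f$, and apply a doubling trick to symmetrize the domain. Each intermediate structure is e-interpretable in $F = (A,B,C;f,\La)$, and transitivity of e-interpretations (Proposition \ref{interpretation_transitivity}) composed with Theorem \ref{maxringdefinable} then delivers $\Theta$.

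First I would e-define $M$ inside $C$. Fix generators $b_1,\dots,b_n$ of $B$ and $a_1,\dots,a_m$ of $A$. Using bilinearity, any $\sum_i \l_i f(a_i',b_i')\in M$ can be rewritten as $\sum_{j=1}^n f(x_j,b_j)$ with $x_j\in A$, so $M$ is e-defined by the single system $c=\sum_{j=1}^n f(x_j,b_j)$ on variables $x_1,\dots,x_n$. Next, the annihilators
\[
\Ann_l(f)=\{a\in A \mid f(a,b_j)=0 \ \forall j\},\qquad \Ann_r(f)=\{b\in B \mid f(a_i,b)=0 \ \forall i\}
\]
are e-defined by finite systems of equations in $F$. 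By the module version of Lemma \ref{factor_is_interpretable} noted in Remark \ref{r: factor_is_interpretable}, the quotients $\bar A = A/\Ann_l(f)$ and $\bar B = B/\Ann_r(f)$ are e-interpretable as $\L$-modules in $F$, and the induced bilinear map $\bar f:\bar A\times \bar B\to M$ is well-defined, full, non-degenerate, and still e-interpretable in $F$.

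To invoke Theorem \ref{maxringdefinable}, which requires a map of the form $N\times N\to M$, set $N = \bar A\oplus\bar B$ and define the symmetrization
\[
\tilde f: N\times N\to M,\qquad \tilde f\bigl((a,b),(a',b')\bigr) = \bar f(a,b') + \bar f(a',b).
\]
Bilinearity is immediate; fullness holds since $\tilde f((a,0),(0,b')) = \bar f(a,b')$, so the image generates $M$; and for non-degeneracy, if $\tilde f((a,b),(a',b')) = 0$ for all $(a',b')$, setting $a'=0$ forces $\bar f(a,b')=0$ for all $b'$ hence $a=0$, and setting $b'=0$ gives $b=0$. The main subtlety lies precisely here: one must choose a symmetrization that preserves non-degeneracy, and the sum $\bar f(a,b')+\bar f(a',b)$ achieves this, whereas the more obvious $\bar f(a,b')$ does not. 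The structure $(N,M;\tilde f,\La)$ is then e-interpretable in $F$ from the ingredients already assembled.

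Finally, apply Theorem \ref{maxringdefinable} to $\tilde f$ and set $\Theta = R(\tilde f)$, a module-finite $\L$-algebra of scalars e-interpretable as a $\L$-algebra in $(N,M;\tilde f,\La)$; transitivity lands $(\Theta;\Lal)$ inside $F$. For Item 1, take instead $\Theta = Z(\Sym(\tilde f))$: none of the intermediate reductions (quotients, direct sums, the formula for $\tilde f$) require $\cdot\L$ when $\L$ is a field or $\L=\mbb Z$, so Theorem \ref{maxringdefinable}(1) yields $(\Theta;\Lr)$ e-interpretable in $(A,B,C;f,\Lg)$. Item 2 is immediate from Theorem \ref{maxringdefinable}(2), since $\Theta$, $N$, and $M$ are simultaneously zero/finite/infinite, and $N = \bar A\oplus\bar B$ is zero/finite/infinite iff both factors are; the degenerate case $M=0$ is handled by taking $\Theta = 0$.
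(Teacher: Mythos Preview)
Your argument is correct and follows essentially the same reduction as the paper: restrict the codomain, quotient by annihilators, symmetrize, and apply Theorem \ref{maxringdefinable}. The one genuine (but minor) difference lies in the doubling step: the paper keeps the two values as a pair, defining $f_2\colon (A_1\times B_1)^2\to C_1\times C_1$ by $((a,b),(a',b'))\mapsto (f_1(a,b'),f_1(a',b))$, whereas you add them and land in $M$ itself. Both choices are full and non-degenerate and both are clearly e-interpretable, so either works; your version has the mild advantage of not enlarging the codomain. One small clean-up: you take $\Theta=R(\tilde f)$ for the main statement but switch to $Z(\Sym(\tilde f))$ for Item~1 --- since Theorem \ref{maxringdefinable}(1) is stated only for $Z(\Sym)$, it is cleanest to take $\Theta=Z(\Sym(\tilde f))$ throughout (this is also what the paper does; see Remark \ref{r: choose_ring}).
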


The proof of this result relies on constructing from $f$  a suitable full non-degenerate bilinear map of the form $g: X\times X \to Y$, so that we can apply  Theorem \ref{maxringdefinable} to it. 
Observe that $f$ induces a full non-degenerate $\L$-bilinear map
\begin{equation}
f_1: A/\Ann_l(f) \times B/\Ann_r(f) \to \langle f(A,B) \rangle_\L.
\end{equation}
Let us denote $F=(A,B,C;f, \La)$, $A_1=A/Ann_l(f)$, $B_1=B/Ann_r(f)$, $C_1=\langle f(A,B) \rangle_\L$, and $F_1=(A_1, B_1, C_1; f_1, \La)$. Note that $A_1, B_1$ and $C_1$ are finitely generated since $A,B$ and $C$ are Noetherian modules. If $A_1=B_1$, then $f_1$ satisfies the hypothesis of   Theorem \ref{maxringdefinable}. Otherwise   consider the map
\begin{align}
\begin{split}\label{e: f2}
f_2: &\big(A_1 \times B_1\big) \times \big(A_1 \times B_1\big) \to C_1 \times C_1\\
&\big((a,b),(a',b')\big) \mapsto \big(f_1(a,b'),f_1(a',b)\big).
\end{split}
\end{align} 
One can easily check that $f_2$ is a full non-degenerate $\Lambda$-bilinear map between finitely generated $\L$-modules. Denote $F_2=(A_1\times B_1, C_1 \times C_1; f_2, \La)$.  
 Either $f_1$ or $f_2$ are of the desired form, hence Theorem \ref{maxringdefinable} can be applied to at least one of them. Moreover:
\begin{lemma}\label{l: F_1_is_e-interp}
Both $F_1$ and $F_2$ are e-interpretable in $F$. The same is true if one replaces $\La$ with $\Lg$ in $F_1, F_2$, and $F$.
\end{lemma}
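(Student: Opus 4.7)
My plan is to first e-interpret $F_1$ directly in $F$, and then deduce the e-interpretability of $F_2$ from that of $F_1$ using transitivity of e-interpretations (Proposition \ref{interpretation_transitivity}) together with the fact that Cartesian products of e-interpretable structures are e-interpretable.

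For $F_1$, fix generating sets $a_1,\dots,a_n$ of $A$ and $b_1,\dots,b_m$ of $B$ as $\L$-modules. I would proceed in four steps. \textbf{(i)} Note that $\Ann_l(f)$ is e-defined as a subset of $A$ by the finite system $\bigwedge_{j=1}^{m} f(x,b_j)=0$, since by $\L$-bilinearity $f(x,y)=0$ for all $y\in B$ is equivalent to its vanishing on a generating set of $B$. Symmetrically, $\Ann_r(f)$ is e-definable in $F$. \textbf{(ii)} Since both of these are submodules, the analogue of Lemma \ref{factor_is_interpretable} for $\L$-modules (see Remark \ref{r: factor_is_interpretable}) yields that $(A_1;\La)$ and $(B_1;\La)$ are e-interpretable in $F$, with interpretation maps given by the natural quotient projections. \textbf{(iii)} For $C_1=\langle f(A,B)\rangle_\L$, by bilinearity the finite set $\{f(a_i,b_j)\}_{i,j}$ generates $C_1$ as a $\L$-module, and moreover $\lambda f(a_i,b_j)=f(\lambda a_i,b_j)$, so every $c\in C_1$ is of the form $c=\sum_{i,j} f(x_{ij},b_j)$ for some $x_{ij}\in A$. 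Conversely, any such sum lies in $C_1$. Thus $C_1$ is e-defined in $F$ by the single-equation system
\[
c=\sum_{i,j} f(x_{ij},b_j),
\]
with the $x_{ij}$ as auxiliary variables in $A$. The $\La$-operations on $C_1$ are inherited from $C$, so $(C_1;\La)$ is e-interpretable in $F$. \textbf{(iv)} Finally, $f_1$ is interpreted by the relation $c=f(a,b)$ on $A\times B\times C_1$ (composed with the quotient projections above); the preimage of the graph of $f_1$ is then clearly e-definable, and this completes the e-interpretation of $F_1$ in $F$.

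For $F_2$, once $F_1$ is e-interpretable in $F$, the sorts $A_1\times B_1$ and $C_1\times C_1$ are encoded as basic sets of the interpretation (pairs of preimages), with their $\La$-operations defined componentwise. The map $f_2\bigl((a,b),(a',b')\bigr)=(f_1(a,b'),f_1(a',b))$ is e-interpretable because each of its two components is just an invocation of $f_1$. Hence $F_2$ is e-interpretable in $F_1$, and by transitivity in $F$.

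The $\Lg$-version requires no new idea: scanning the argument, the only signature operations actually used in the defining equations are addition, the maps $f$ and $f_1$, and equality; the scalar multiplications $\cdot\lambda$ appear only as operations of the interpreted structure that are inherited verbatim from the ambient one. Therefore the same equations work after replacing $\La$ by $\Lg$ throughout. The main (and really only) subtlety I would watch out for is step (iii): one must make sure that the number of summands needed to express an arbitrary element of $C_1$ is a priori bounded (here by $nm$), so that the defining condition is a genuine finite system of equations rather than an infinite disjunction; this is what finite generation of $A$ and $B$ buys us.
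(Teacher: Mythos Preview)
Your proof is correct and follows essentially the same route as the paper's own argument: e-define the annihilators via finitely many equations indexed by module generators, pass to quotients using Remark \ref{r: factor_is_interpretable}, e-define $C_1$ by a bounded sum of values of $f$, and then obtain $F_2$ from $F_1$ by transitivity. The only cosmetic difference is that the paper collapses your $nm$ auxiliary variables in step (iii) down to $m$, writing $c=\sum_j f(x_j,b_j)$ with the $i$-sum absorbed into each $x_j\in A$, but this changes nothing.
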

\begin{proof}
 Let $S_A=\{a_1,\ldots,a_n\}$ and $S_B=\{b_1,\ldots,b_m\}$ be generating sets of $A$ and $B$, respectively. The submodules $\Ann_l(f)$ and $\Ann_r(f)$ 
%
%
%
are e-definable as sets 
in $F$ by the systems of equations $f(x,b_i)=0$, $i=1,\dots, m$, and $\wedge_i f(a_i, y)=0$, $i=1,\dots, n$, respectively. Here   $x$ and $y$ are variables taking values in $A$ and in $B$, respectively. 
An element $c\in C$ belongs to  $C_1=\langle f(A,B) \rangle_\L$ if and only if there exist elements $\lambda_{ij}$, $i=1,\dots, n$, $j=1,\dots, m$, such that
$$
c=\sum_{1\leq i,j\leq n,m} \l_{i,j}f(a_i,b_j)=\sum_{j=1}^m f\left(\sum_{i=1}^n \l_{i,j}a_i,b_j\right).
$$
 It follows that $C_1$ is e-definable as a set in $F$ by the equation $z=\sum_j f(x_j,b_j)$ on variables  $z$ and $X = \{x_j\mid j=1, \dots, n\}$. The variable $z$ takes values in $C$ and the variables from $X$ take values in $A$. 

The operations of $\Ann_l(f), \Ann_r(f)$ and $C_1$ are e-interpretable in $F$ because they are already present in the signature of $F$.  Hence by  Lemma \ref{factor_is_interpretable} and Remark \ref{r: factor_is_interpretable},  $(A_1;\La)$ and $(B_1;\La)$ are e-interpretable as $\L$-modules in $(A;\La)$ and $(B;\La)$, respectively. Moreover, from the proof of Lemma \ref{factor_is_interpretable} and the fact that the e-definitions in $F$ of $\Ann_l(f)$ and $\Ann_r(f)$  do not use multiplication by scalars, we have that $(A_1;\Lg)$ and $(B_1;\Lg)$ are e-interpretable in $(A;\Lg)$ and $(B;\Lg)$, respectively.

The preimage in $F$ of the graph of $f_1$ is e-definable in $F$ by the system consisting on the two equations $z = f(x,y)$ and $z = \sum_j f(x_j,b_j)$ on variables $z, x,y, X=\{x_1, \dots, x_m\}$ taking values in $C$, $A$, $B$, and $A$, respectively (note that the second equation is added to ensure that $z$ takes values in $C_1$). Again this equation does not use multiplication by scalars. We conclude that $F_1=\left( A_1, B_1, C_1; f_1, \La \right)$ is e-interpretable in $(A,B,C;f,\La)$, and that the same holds if one drops multiplication by scalars in both structures. 

Finally, we claim that $(A_1\times B_1; \La)$ is e-interpretable in  $(A_1,B_1; \La)$, and $(C_1\times C_1; \La)$ is e-interpretable in $(C_1;\La)$. Indeed, both $A_1\times B_1$ and $C_1 \times C_1$ are basic sets of $(A_1, B_1)$ and $C_1$, and so they are defined as sets by empty systems of equations. Similarly as before, the equations $z=f_1(x, y')$ and $z'=f_1(x', y)$ on variables $x,y,x',y', z, z'$ taking values in $A_1, B_1, A_1, B_1, C_1, C_1$, respectively, e-define the graph of $f_2$  in $F_1$. It follows that the whole two-sorted structure $F_2$ is e-interpretable in $F_1$, and also in $F$ by transitivity of e-interpretations.  Moreover,  in all e-interpretations we constructed, multiplication by scalars $\L$ in one structure is only used to e-interpret  multiplication by scalars $\L$ in the other structure. Hence $F_2$ is still e-interpretable in $F$  if  one  replaces $\La$ by $\Lg$ in the  $F_2$ and $F$.  \end{proof}

\begin{proof}[Proof of Theorem \ref{ringdefinablegen}] The result follows immediately  after using Theorem \ref{maxringdefinable} in order to e-interpret $(Z(\Sym(f_2));\Lal)$ or $(Z(\Sym(f_1));\Lal)$ in $F_2$ or $F_1$, depending on whether or not $A_1=B_1$, respectively. Items 1 and 2 are  a direct consequence of Items 1 and 3 of Theorem \ref{maxringdefinable}.
\end{proof}

\begin{remark}\label{r: choose_ring}
In Theorem \ref{ringdefinablegen} we  e-interpreted $Z(\Sym(f_2))$ in $F_2$ (or $Z(\Sym(f_1))$ in $F_1$ if $A_1=B_1$).  Alternatively one can also  e-interpret the largest ring of scalars $R(f_2)$  of $f_2$ in $F_2$ (similarly for $f_1$). This may have some advantages if one seeks to study the structure of $A,B,$ $C$ and $f$, because $R(f_2)$ is determined by  ``more properties'' of these  than $Z(\Sym(f_2))$. However, when it comes to the Diophantine problem, $Z(\Sym(f_2))$ is a more practical choice than $R(f_2)$, because it uses a simpler e-interpretation. For instance, as we have seen, if $\L$ is a field then one can drop multiplication by scalars in the e-interpretation of $(Z(\Sym(f_2));\Lr)$, whereas there is no apparent way to do the same with $(R(f_2);\Lr)$.
\end{remark}

\section{Rings and algebras over finitely generated associative commutative unitary rings}\label{s: section5}

The following lemma is a combination of the results obtained so far. It constitutes the main ``general tool'' presented in this paper.  We will explore its consequences throughout the rest of the section. In \cite{GMO_solvable} it is applied further to the area of group theory. We refer again to Subsection \ref{p: notation} for important notation and terminology conventions. 

\begin{lemma}\label{approach}
Let $\L$ be a associative commutative unitary ring,  let $f:A\times B\to C$ be a $\L$-bilinear map between finitely generated $\L$-modules, and write $C_1=\langle {\it Im}(f)\rangle_\L$. Suppose that $(A,B,C;f, \La)$ is e-interpretable in some structure $\mc{M}$. Then there exists an associative, commutative, unitary $\L$-algebra $R$ such that $R$ is finitely generated as a $\L$-module and $(R;\Lal)$ is e-interpretable in $\mc{M}$. Moreover, $R$ is infinite if and only if $C_1$ is  infinite. 

Furthermore, if $C_1$ is infinite and $\L$ is finitely generated, then there exists a ring of integers  $O$ of a number field or a  global function field such that $(O;\Lr)$ is e-interpretable in $(R;\Lr)$, and in $\mc{M}$. Additionally in this case:
\begin{enumerate}

\item If $\L$ has  positive characteristic $p$, then the ring of polynomials $(\mbb{F}_p[t];\Lr)$ is e-interpretable in  $\mc{M}$, and $\mc{D}(\mc{M})$ is undecidable.
\item If $\L=\mbb Z$ then $O$ is a ring of algebraic integers.
\end{enumerate}
If $\L$ is  $\mbb Z$ or a field, then the whole lemma  holds after replacing  $(A,B,C; f, \La)$ by $(A,B,C; f, \Lg)$ and $(R;\Lal)$ by $(R;\Lr)$, i.e.\ multiplication by scalars is not required.
\end{lemma}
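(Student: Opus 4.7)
The plan is to chain Theorem \ref{ringdefinablegen} with Theorem \ref{t: f_g_commutative_rings}, using transitivity of e-interpretations (Proposition \ref{interpretation_transitivity}) as the glue.

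First, I apply Theorem \ref{ringdefinablegen} to $f: A \times B \to C$ to obtain a module-finite $\L$-algebra of scalars $R$ such that $(R; \Lal)$ is e-interpretable in $F = (A, B, C; f, \La)$. Transitivity then gives that $(R; \Lal)$ is e-interpretable in $\mc{M}$. For the cardinality dichotomy, Item 2 of Theorem \ref{ringdefinablegen} asserts that $R$ is zero, finite, or infinite if and only if both $C_1$ and $A/\Ann_l(f) \times B/\Ann_r(f)$ share that class. The observation needed to match the statement of the lemma is that those two conditions collapse to one: in the proof of Theorem \ref{ringdefinablegen} the auxiliary map $f_2$ (or $f_1$ when $A_1 = B_1$) is a full, non-degenerate $\L$-bilinear map between finitely generated modules with domain $A_1 \times B_1$ and codomain $C_1 \times C_1$, so by Item 2 of Theorem \ref{maxringdefinable} these two modules share cardinality class, and hence so do $A_1 \times B_1$ and $C_1$. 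Tracking $C_1$ alone thus suffices.

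Next, suppose $C_1$ is infinite and $\L$ is finitely generated. Then $R$ is infinite by the dichotomy just established, and being module-finite over the finitely generated $\L$, it is itself finitely generated as a ring of scalars. Theorem \ref{t: f_g_commutative_rings} now yields a ring of integers $O$ of a global field with $(O; \Lr)$ e-interpretable in $(R; \Lr)$, hence in $\mc{M}$ by transitivity. If $\L$ has positive characteristic $p$, then so does $R$, because the unital ring homomorphism $\L \to R$ forces the characteristic of $R$ to divide $p$, and $R$ is nonzero since $C_1$ is infinite. Item 2 of Theorem \ref{t: f_g_commutative_rings} then supplies the e-interpretation of $(\mbb{F}_p[t]; \Lr)$ in $(R; \Lr)$ together with undecidability of $\mc{D}(R; \Lr)$, whence $\mc{D}(\mc{M})$ is undecidable. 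If $\L = \mbb Z$ then $R$ is finitely generated as an abelian group (being module-finite over $\mbb Z$), so Item 1 of Theorem \ref{t: f_g_commutative_rings} identifies $O$ as a ring of algebraic integers.

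For the final clause, when $\L$ is $\mbb Z$ or a field, Item 1 of Theorem \ref{ringdefinablegen} provides the e-interpretation of $(R; \Lr)$ in $(A, B, C; f, \Lg)$ without needing scalar multiplication, and the subsequent passage through Theorem \ref{t: f_g_commutative_rings} is already phrased in the language of rings, so no further adjustment is required. The only genuinely subtle point in the whole argument is the cardinality collapse from $A_1 \times B_1$ down to $C_1$; this is the step most likely to be overlooked, and it rests squarely on the fullness and non-degeneracy of $f_2$ together with Item 2 of Theorem \ref{maxringdefinable}. Everything else is assembly of results already established.
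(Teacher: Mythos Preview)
Your proof is correct and follows the same route as the paper: apply Theorem \ref{ringdefinablegen} to get $R$, then Theorem \ref{t: f_g_commutative_rings} to get $O$, with transitivity throughout. The only place you diverge is in your handling of the cardinality clause: the paper simply cites Item 2 of Theorem \ref{ringdefinablegen} and moves on, implicitly using the trichotomy (zero/finite/infinite) to conclude that $C_1$ and $A_1\times B_1$ must always lie in the same class as $R$ and hence as each other, so tracking $C_1$ alone suffices. Your argument via the fullness and non-degeneracy of $f_2$ and Item 2 of Theorem \ref{maxringdefinable} reaches the same conclusion by a more direct structural route; both are valid, and yours makes the point more transparent.
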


\begin{proof}
By Theorem \ref{ringdefinablegen}, there exists an associative commutative unitary $\L$-algebra $R$ such that $R$ is finitely generated as a $\L$-module and $(R;\Lal)$ is e-interpretable in $(A,B,C;f, \La)$. Hence $(R;\Lal)$ is e-interpretable in $(R;\Lal)$  by transitivity of e-interpretations. The statement regarding the cardinality of $R$ follows from Item 2 of Theorem \ref{ringdefinablegen}.

Suppose that $C_1$ is infinite and that $\L$ is finitely generated. Then   $R$ is infinite and finitely generated as a ring. Hence,  by Theorem \ref{t: f_g_commutative_rings} there exists a  ring of  integers $O$ of  a number field or a  global function field such that $(O;\Lr)$  is e-interpretable in $(R;\Lr)$, and hence in $\mc{M}$.

If $\L$ has positive characteristic $p>0$, then so does  $R$, because it  is  a unitary algebra over $\L$. Hence $(\mbb{F}_p[t];\Lr)$ is e-interpretable in $\mc{M}$, by   Item 1 of Theorem \ref{t: f_g_commutative_rings} and by transitivity. If $\L=\mbb Z$, then $R$ is finitely generated as an abelian group, hence $O$ is a ring of algebraic integers by Item 3 of Theorem \ref{t: f_g_commutative_rings}.  

If $\L$ is $\mbb Z$ or a field, then $(R; \Lr)$ is e-interpretable in $(A,B,C;f,\Lg)$, by  Item 1 of Theorem \ref{ringdefinablegen}. Therefore if the latter is e-interpretable in $\mc{M}$, then the lemma holds after replacing $\Lal$ by $\Lr$ and $\La$ by $\Lg$. 
\end{proof}

\subsection{Rings and algebras which are finitely generated as modules}\label{s: module_finite_algebras}

Throughout this subsection, $\L$ denotes a finitely generated associative commutative unitary ring, possibly infinitely generated.

The following is one of the main results of the paper. The case $\L=\mbb Z$ will be considered separately afterwards. We recall that the language $\Lal$ of a $\L$-algebra  refers to the language of rings together with multiplication by any constant element from $\L$ (however variables cannot take values in $\L$), see Subsection \ref{p: notation} for more details.

\begin{theorem}\label{t: main_thm_algebras}
Let $R$ be a (possibly non-associative, non-commutative, and non-unitary)   algebra over a finitely generated associative commutative unitary ring $\L$. Suppose  that $R$ is finitely generated as a $\L$-module.  Then if,  $ R^2 = \langle \{xy\mid x\in R, y\in R\} \rangle_\L$ is infinite,  there exists a  ring of  integers $O$ of a number field or a  global function field such that $(O;\Lr)$  is e-interpretable in $(R; \Lal)$, and the Diophantine problem $\mc{D}(O;\Lr)$ is Karp-reducible to $\mc{D}(R;\Lal)$. Moreover: 
\begin{enumerate}
\item If $R^2$ is infinite and $\L$ has positive characteristic, then $(\mbb{F}_p[t];\Lr)$ is e-interpretable  in $(R;\Lal)$ for some prime integer $p$, and $\mc{D}(R;\Lal)$ is undecidable. 
\item If  $R^2 $ is finite and $\mc{D}(R; \La)$ is decidable, then $\mc{D}(R;\Lal)$ is decidable. 
\end{enumerate}
If $\L$ is a finite field, then all the above holds after replacing $(R;\Lal)$ by $(R;\Lr)$.  
\end{theorem}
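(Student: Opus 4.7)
The plan is to instantiate Lemma \ref{approach} with the ring multiplication $\mu:R\times R\to R$. Since $R$ is module-finite over $\L$, the map $\mu$ is a $\L$-bilinear map between finitely generated $\L$-modules, and the three-sorted structure $(R,R,R;\mu,\La)$ is e-interpretable in $(R;\Lal)$ trivially, via the identity map on each sort (both $\mu$ and the $\La$-operations are part of the $\Lal$-signature). Because $\langle\mathrm{Im}(\mu)\rangle_\L = R^2$ is infinite by hypothesis, Lemma \ref{approach} produces a module-finite $\L$-algebra of scalars $R'$ with $(R';\Lal)$ e-interpretable in $(R;\Lal)$, and, since $\L$ is finitely generated, a ring of integers $O$ of a global field with $(O;\Lr)$ e-interpretable in $(R;\Lal)$. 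The reduction $\mc{D}(O;\Lr)$ to $\mc{D}(R;\Lal)$ is Proposition \ref{Diophantine_reduction}, and Item 1 (positive characteristic) is immediate from Item 1 of Lemma \ref{approach}.

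The main obstacle is Item 2: when $R^2$ is finite, reduce $\mc{D}(R;\Lal)$ to $\mc{D}(R;\La)$. The key preliminary step is to show that the left and right annihilators $\mathrm{Ann}_l=\{x\in R\mid xR=0\}$ and $\mathrm{Ann}_r=\{y\in R\mid Ry=0\}$ have finite index in $R$. The induced non-degenerate $\L$-bilinear map $R/\mathrm{Ann}_l\times R/\mathrm{Ann}_r\to R^2$ embeds $R/\mathrm{Ann}_l$ into $\mathrm{Hom}_\L(R/\mathrm{Ann}_r,R^2)$, and the latter is finite because $R/\mathrm{Ann}_r$ is finitely generated as a $\L$-module while $R^2$ is finite, so a $\L$-homomorphism is determined by finitely many values in a finite set. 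By symmetry $R/\mathrm{Ann}_r$ is finite too.

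Given a system $\Sigma$ of $\Lal$-equations in variables $x_1,\ldots,x_n$, I would enumerate all choices of cosets $c_i\in R/\mathrm{Ann}_l$ and $d_i\in R/\mathrm{Ann}_r$ for each $x_i$ (finitely many combinations). Since $x_ix_j$ depends only on $x_i\bmod\mathrm{Ann}_l$ and $x_j\bmod\mathrm{Ann}_r$, every such choice turns each ring product into an explicit constant $c_id_j\in R^2$, leaving a $\La$-system $\Sigma_{\mathbf{c},\mathbf{d}}$ together with membership constraints $x_i-c_i\in\mathrm{Ann}_l$ and $x_i-d_i\in\mathrm{Ann}_r$. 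Because $\L$ is finitely generated hence Noetherian, both annihilators are finitely generated as $\L$-submodules of $R$, and the constraint ``$x\in\mathrm{Ann}_l$'' can be expressed by an equation $x=\sum_k\lambda_ka_k$ for a fixed module generating set $\{a_k\}$ of $\mathrm{Ann}_l$, which is a $\La$-equation. Thus $\Sigma$ is satisfiable in $(R;\Lal)$ if and only if some $\Sigma_{\mathbf{c},\mathbf{d}}$ is satisfiable in $(R;\La)$, and applying $\mc{D}(R;\La)$ to each of the finitely many cases decides $\mc{D}(R;\Lal)$.

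For the final clause ($\L$ a finite field): a module-finite algebra over a finite ring is itself finite, so the hypothesis ``$R^2$ infinite'' cannot occur and the main assertion is vacuous, while Item 2 reduces to deciding equations in a finite structure, which is trivial. Alternatively, if one wishes to state the result uniformly in $\Lr$, one invokes the final clause of Lemma \ref{approach} (valid for $\L$ a field) to perform Steps 1--3 directly in $(R;\Lr)$ without scalar multiplication. The hardest technical point in the whole proof is the bookkeeping for Item 2 — specifically, checking that the annihilator cosets are $\La$-definable and that the substitution of a constant for each product $x_ix_j$ is well-defined on cosets, which is exactly what non-degeneracy of the induced quotient map guarantees.
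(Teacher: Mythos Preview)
Your proof of the main assertion, Item~1, and the finite-field clause matches the paper's: apply Lemma~\ref{approach} to the ring multiplication $\cdot:R\times R\to R$, using $\langle\mathrm{Im}(\cdot)\rangle_\L=R^2$ and the trivial e-interpretation of $(R,R,R;\cdot,\La)$ in $(R;\Lal)$. Your direct argument that $R/\Ann_l$ and $R/\Ann_r$ are finite is also correct; the paper instead cites Item~2 of Theorem~\ref{ringdefinablegen} for the same fact.

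For Item~2 your strategy again coincides with the paper's --- enumerate the finitely many annihilator cosets and replace each product by the constant it determines --- but your implementation of the coset constraint is flawed. You assert that ``$x\in\Ann_l$'' is the $\La$-equation $x=\sum_k\lambda_k a_k$ on module generators $a_k$ of $\Ann_l$, with the $\lambda_k$ as free variables. That is not a $\La$-formula: in $\La$ every variable takes values in the module $R$, and scalars enter only through the unary functions $\cdot_\lambda$ for \emph{fixed} $\lambda\in\L$; Subsection~\ref{p: notation} says explicitly that ``no variable can take values in $\L$''. The paper handles this point differently: it first normalises $\Sigma$ into a $\La$-part together with isolated product equations $x_1=x_2\cdot x_3$ (each product variable occurring exactly once), substitutes $x_2=a_{i}+x'$, and imposes the constraint in the form $\bigwedge_{r\in S_R} x' r = 0$ for a fixed finite module-generating set $S_R$ of $R$. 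This is a genuine e-definition of $\Ann_l$, and under it every product equation collapses to the $\La$-equation $x_1=a_ib_j$; the paper then treats the resulting system as $\L$-linear. You should replace your module-generator description of $\Ann_l$ by this one.
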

\begin{proof}
The ring multiplication operation $\cdot$  of $R$ induces a $\L$-bilinear map between finitely generated $\L$-modules $\cdot: R \times R \to R$,  with  $\langle Im(\cdot) \rangle_{\L}=R^2$. Since the three-sorted structure $(R, R, R; \cdot, \La)$ is e-interpretable in $(R; \Lal)$, the  result, except Item 2, follows from Lemma \ref{approach}.

We now prove Item 2. Let $\Sigma$ be a system of equations in the $\L$-algebra $R$. By adding new variables and equations in an analogous way as done in the proof of Lemma \ref{equation_reduction}, we may rewrite that $\Sigma$ into an equivalent system of equations, still denoted $\Sigma$, of the following form: the new $\Sigma$ consists in a system of equations in the $\L$-module $(R;\La)$ (i.e.\ a system of $\L$-linear equations), in conjunction with  a system of equations of the form  $x_{1}=y_{1}z_1, x_{2}=y_{2} z_{2}, \dots, x_k = y_k z_k$ where the $x_1, \dots, x_k, y_1, \dots, y_k, z_1, \dots, z_k$ are variables taking values in $R$. Note that no variable appears more than once in this last part of the  system.


Note  that  $\Ann_l(\cdot)$ and $\Ann_r(\cdot)$ are finite index submodules of $R$,   by Item 2 of Theorem \ref{ringdefinablegen}.    Let $S_l=\{a_1, \dots, a_s\}$ and $S_r=\{b_1, \dots, b_t\}$  be full systems of coset representatives of $R/\Ann_l(\cdot)$ and $A/\Ann_r(\cdot)$, respectively. Let also $S_R$ be a finite generating set of $R$. 

For each variable $y\in \{y_1, \dots, y_k\}$, do the following:  choose a coset representative $a\in \{a_1, \dots, a_s\}$, and introduce a new variable $y'$. Then replace each occurrence of $y$ in $\Sigma$ by $a+ y'$. We now wish to make $y'$ to take values in $\Ann_l(\cdot)$.  The system of equations $\wedge_{r\in S_R} ur=0$ on the variable $u$ e-defines $\Ann_l(\cdot)$ in $R$. Hence, we add the system   $\wedge_{r\in S_L} y' r=0$ to $\Sigma$ in order to ensure that $y'$ takes values in $\Ann_l(\cdot)$. Notice that $\wedge_{r\in S_L} y' r=0$ is a system of equations in the $\L$-module $(R;\La)$.

We now proceed in an analogous  way with each variable $z\in \{z_1, \dots, z_k\}$, this time replacing $z$ by  $b+z'$ for some $b\in\{b_1, \dots, b_t\}$, and adding the system of equations $\wedge_{r\in S_R} rz'=0$ to $\Sigma$, in order to ensure that $z'$ takes values in $\Ann_r(\cdot)$.  

Let $\Sigma'$ be the resulting system of equations after making all the above transformations. Since there are finitely many coset representatives, the number of all  possible resulting systems $\Sigma'$ is finite. Let $\Sigma_1, \dots, \Sigma_m$ be all of them. It is clear that $\Sigma$ has a solution if and only if $\Sigma_i$ has a solution for some $i=1,\dots, m$.

We now prove that it is possible to decide algorithmically if each one of the $\Sigma_i$ has a solution or not, in which case our proof is concluded. Indeed, each $\Sigma_i$ consists in some equations in the $\L$-module $(R;\La)$, together with some equations of the form $x= (a+ y')(b+z')$, where $y'$ and $z'$ are variables  taking values in $\Ann_l(\cdot)$ and in $\Ann_r(\cdot)$, respectively. Hence each equation $x= (a+ y')(b+z')$ can be replaced by $x=ab$, which is an equation in $(R;\La)$. Thus $\Sigma_i$ is equivalent to a system of $\L$-linear equations. By assumption, we can algorithmically check if $\Sigma_i$ has a solution. 
\end{proof}

The following is  a particular case of the previous Theorem \ref{t: main_thm_algebras}. It is stated separately due to its independent interest.

\begin{theorem}\label{non_commutative_associative_version} 
Let $A$ be a ring  (possibly non-associative, non-commutative, and non-unitary). Assume that $A$  is finitely generated as an abelian group, and that $ A^2$ is infinite.  Then there exists a  ring of algebraic integers $O$ such that $(O;\Lr)$  is e-interpretable in $(A; \Lr)$, and $\mc{D}(O; \Lr)$ is Karp-reducible to $\mc{D}(A,\Lr)$. If otherwise $A^2$ is finite, 
then $\mc{D}(R;\Lr)$ is decidable. 
\end{theorem}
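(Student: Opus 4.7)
My plan is to derive Theorem \ref{non_commutative_associative_version} as a direct specialization of Theorem \ref{t: main_thm_algebras} to the case $\L=\mbb{Z}$. Under this specialization a $\mbb{Z}$-algebra is precisely a ring, a module-finite $\mbb{Z}$-algebra is precisely a ring that is finitely generated as an abelian group, and the two relevant languages collapse: $\Lal=\Lr$ and $\La=\Lg$, because scalar multiplication by an integer $n$ is just $x+\overset{n}{\cdots}+x$, a term in the group signature. With these identifications, the hypotheses of Theorem \ref{non_commutative_associative_version} match the $\L=\mbb{Z}$ version of Theorem \ref{t: main_thm_algebras} verbatim, and the e-interpretability of some ring of integers $O$ of a global field in $(A;\Lr)$, together with the reduction of $\mc{D}(O;\Lr)$ to $\mc{D}(A;\Lr)$, are immediate consequences.

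To upgrade the conclusion from ``ring of integers of a global field'' to ``ring of algebraic integers'', I would either trace the proof of Theorem \ref{t: main_thm_algebras} back to Lemma \ref{approach}, whose Item 2 states exactly that in the case $\L=\mbb{Z}$ the output $O$ is a ring of algebraic integers; or argue directly from characteristic. The direct argument is short: a finitely generated abelian group of positive characteristic $p$ is a finite-dimensional $\mbb{F}_p$-vector space, hence finite, so $A^2\subseteq A$ would be finite, contradicting the hypothesis. Thus $A$ has characteristic $0$, so the ring of integers $O$ produced by Theorem \ref{t: main_thm_algebras} lies in characteristic $0$ and is therefore a ring of algebraic integers rather than of a global function field.

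For the second half of the statement, assume $A^2$ is finite. Item 2 of Theorem \ref{t: main_thm_algebras} reduces decidability of $\mc{D}(A;\Lr)$ to decidability of $\mc{D}(A;\La)=\mc{D}(A;\Lg)$. The latter is the Diophantine problem of a finitely generated abelian group, which is classically decidable: using the structure theorem $A\cong \mbb{Z}^r\oplus T$ with $T$ finite, any system of equations in $(A;\Lg)$ is a system of $\mbb{Z}$-linear equations in the free part together with equations in the finite part $T$, both of which can be solved algorithmically (e.g.\ via Smith normal form for the free part and a brute-force enumeration for the torsion part). Hence $\mc{D}(A;\Lr)$ is decidable.

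The proof is therefore essentially a bookkeeping exercise, and the only mild subtlety is the passage ``ring of integers of a global field $\Rightarrow$ ring of algebraic integers''; this I would handle by the characteristic-zero argument above, which is self-contained and avoids reopening the proof of Lemma \ref{approach}.
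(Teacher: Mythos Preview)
Your approach is essentially the paper's: specialize Theorem \ref{t: main_thm_algebras} to $\L=\mbb{Z}$, then invoke Item 2 of Lemma \ref{approach} to get that $O$ is a ring of algebraic integers, and for the decidable case use Item 2 of Theorem \ref{t: main_thm_algebras} together with decidability of systems of linear equations in finitely generated abelian groups. On all of this you match the paper.

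However, your preferred ``self-contained'' alternative for the upgrade to algebraic integers has a genuine gap. You argue that $A$, being a finitely generated abelian group with $A^2$ infinite, must have characteristic $0$; that step is fine. But the inference ``hence the ring of integers $O$ produced by Theorem \ref{t: main_thm_algebras} lies in characteristic $0$'' does not follow. E-interpretability does not push characteristic downward: for instance $(\mbb{F}_p;\Lr)$ is e-interpretable in $(\mbb{Z};\Lr)$. Theorem \ref{t: main_thm_algebras} as a black box only outputs ``a ring of integers of a global field'' and says nothing ruling out positive characteristic when $\L$ has characteristic $0$. The paper handles this not by tracking characteristic but by tracking the property ``finitely generated as an abelian group'': the intermediate ring of scalars coming from Theorem \ref{ringdefinablegen} is module-finite over $\mbb{Z}$, hence finitely generated as an abelian group, and then Item 1 of Theorem \ref{t: f_g_commutative_rings} (equivalently Item 2 of Lemma \ref{approach}) gives that $O$ is a ring of algebraic integers. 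So you do need to open Lemma \ref{approach} (or at least cite its Item 2), exactly as in your first option; the shortcut via the characteristic of $A$ alone is not valid.
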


\begin{proof}The first part of the theorem follows in the same way as  Theorem \ref{t: main_thm_algebras}, taking $\L=\mbb Z$ and observing that here $O$ is a ring of algebraic integers by Item 2 of Lemma \ref{approach}. The last part  follows by Item 2 of  Theorem \ref{t: main_thm_algebras}, since here  $(A;\La)$ is  a finitely generated abelian group and $\mc{D}(A;\Lg)$ is decidable \cite{Ershov},  hence  $\mc{D}(A;\La)$ is decidable.
\end{proof}

\subsection{Finitely generated associative, commutative non-unitary rings and algebras}

 In this subsection we study associative commutative rings and algebras  that do not have an identity element. We begin with a  lemma that allows us to e-interpret a certain unitary algebra.

\begin{lemma}\label{l: non_unit_to_unit}
Let $A$ be a finitely generated associative commutative non-unitary algebra   over a (possibly infinitely generated) associative commutative unitary  ring $\Theta$. Then there exists   an associative commutative unitary ring $\L$ and an associative commutative unitary $\L$-algebra  $B$ such that $B$ is finitely generated as a $\L$-module, and $(B;\Lal)$ is e-interpretable in $(A;\Lal)$.
%
Additionally,   $A^2$ is infinite if and only if $B$ is infinite.  
\end{lemma}

\begin{proof}
Define $\L=\Theta \oplus A$ to be the set of formal sums of elements from $\Theta$ and from $A$, equipped with the natural addition and multiplication operations. More precisely,   $\L=\Theta \oplus A=\{(\theta,a)\mid \theta \in \Theta, a \in A\}$ and  $\L$ is endowed with the following ring operations: $(\theta_1,a_1)+(\theta_2,a_2)=(\theta_1+\theta_2,a_1+a_2)$ and $(\theta_1,a_1)\cdot(\theta_2,a_2)=(\theta_1\theta_2,\theta_1a_2+\theta_2a_1 + a_1a_2)$.  We have that $\L$ is an associative commutative unitary ring.

 Moreover, $\L$ acts naturally by endomorphisms on $A$, i.e.\ $(\theta, a) \cdot a' = \theta a' + a a'$ for all $(\theta, a)\in \L$ and all $a'\in A$. With this action  $A$ is a $\L$-algebra. During this proof we write $A_\L$ and $A_\Theta$ to refer to $A$ seen as a $\L$-algebra or as a $\Theta$-algebra, respectively. The operation of multiplication by a given scalar $\theta\oplus a\in \L$ is e-interpreted in $(A_\Theta;\Lal)$ by the  equation $y=\theta x+ a x$ on variables $y,x$ taking values in $A$. Hence $(A_\L;\Lal)$ is e-interpretable in  $(A_\Theta;\Lal)$. Suppose that $A_\Theta$ is generated as a $\Theta$-algebra by $n$ elements $S_A=\{a_1, \dots, a_n\}$. Then, since each element of $A$ can be written as a linear combination of monomials from $\Theta[a_1, \dots, a_n]$, we have that $A_\L$ is generated as a $\L$-module by $S_A$, since for all $x\in A$ there exists $y_1, \dots, y_n \in  \L$  such that $x=\sum_i y_i a_i$. Hence $A$ is  finitely generated as a $\L$-module.

The ring multiplication of $A_\L$ is a $\L$-bilinear map between finitely generated $\L$-modules $\cdot:A\times A\to A$. Moreover, $(A_\L,A_\L, A_\L; \cdot, \La)$ is e-interpretable in $(A_\L;\Lal)$, which in turn is e-interpretable in $(A_\Theta; \Lal)$. Hence, by the first part of Lemma \ref{approach}, and by transitivity of e-interpretations, there exists an associative commutative unitary $\L$-algebra  $D_\L$, which is finitely generated as a $\L$-module, and $(D_\L;\Lal)$ is e-interpretable in $(A_\Theta;\Lal)$. 

Finally, we note that  $(A_\L)^2 = \langle {\it Im}(\cdot) \rangle_\L$ is  infinite if and only if $D_\L$  is also  infinite, by Lemma \ref{approach}.
\end{proof}

We are ready to study associative commutaty non-unitary algebras.  We convene that the \emph{characteristic} of a non-unitary ring $A$ is defined as the maximum positive integer $n$ such that $nx=0$ for all $x\in R$. 


\begin{theorem}\label{t: non_unit_algebras}
 Let $L$ be a finitely generated associative commutative non-unitary algebra over a finitely generated associative commutative unitary ring $\Theta$, with $L^2$ infinite. Then there exists a  ring of  integers $O$ of a number field or of a  global function field such that $(O;\Lr)$  is e-interpretable in $(L; \Lal)$, and $\mc{D}(O;\Lr)$ is Karp-reducible to $\mc{D}(L;\Lal)$. 

Moreover, if $\Theta$ has positive characteristic, then $O$ is the ring of integers of a global function field,  $(\mbb{F}_p[t];\Lr)$ is e-interpretable  in $(L;\Lal)$ for some prime integer $p$, and $\mc{D}(L;\Lal)$ is undecidable.  
\end{theorem}
\begin{proof}
Let $B$ be the associative commutative unitary $\L$-algebra, finitely generated as a $\L$-module, given by  Item 1 of Lemma \ref{l: non_unit_to_unit}, where $\L=\Theta \oplus B$. Suppose that $L^2$ is infinite. By this same lemma, $B$ is infinite, and since $B$ is unitary we have that $B^2=B$ is infinite as well. Note further that if $\Theta$ has positive characteristic then so does  $\L$. The result now follows by transitivity of e-interpretations and by Theorem \ref{t: main_thm_algebras} applied to $B$. 
\end{proof}


Finally, we formulate and slightly modify the previous Theorem \ref{t: non_unit_algebras} for the particular case when $L$ is a ring, i.e.\ when $\Theta = \mbb{Z}$.

\begin{theorem}\label{t: non_unit_commutative_ring}
Let $A$  be a finitely generated associative commutative non-unitary ring, with   $A^2$   infinite. Then there exists a ring of integers $O$ of a number or a global function field such that $(O;\Lr)$ is e-interpretable in $(A; \Lr)$, and $\mc{D}(O;\Lr)$ is Karp-reducible to $\mc{D}(A;\Lr)$. 

Moreover, if $A$ has positive characteristic, then the following holds: $O$ is the ring of integers of a global function field; the ring of polynomials  $(\mbb{F}_p[t]; \Lr)$ is e-interpretable in $A$ for  prime integer $p$; and $\mc{D}(A; \Lr)$ is undecidable.
%
\end{theorem}

\begin{proof} 
Let $n$ be the characteristic of $A$. Then $A$ is a  finitely generated $\mbb{Z}/n\mbb{Z}$-algebra, where if $n=0$ we understand that $\mbb{Z}/n\mbb{Z}=\mbb{Z}$.   Moreover, multiplication by scalars from $\mbb{Z}/n\mbb{Z}$ is e-interpretable in $(A,\Lr)$ since $(k + n\mbb{Z})\cdot x = x + \overset{k}{\dots}+x$ for any $x\in A$ and any equivalence class $(k+n\mbb{Z})\in \mbb{Z}/n\mbb{Z}$ with representative $0\leq k\leq n-1$. Hence $(A, \Lal)$ is e-interpretable in $(A,\Lr)$.  The result now follows by applying the previous Theorem \ref{t: non_unit_algebras} on $(A, \Lal)$, and by transitivity of e-interpretations.  
\end{proof}

\subsection{Finitely generated rings and algebras satisfying an infiniteness condition}\label{s: f_g_algebras_with_ininiteness_condition}

 In this subsection $\L$ denotes an associative commutative unitary ring, possibly infinitely generated.

We next apply  Theorems \ref{t: main_thm_algebras} and \ref{non_commutative_associative_version} to  certain classes of non-commutative finitely generated  rings and algebras $L$ which are not necessarily finitely generated as modules. The approach consists in e-defining an ideal $I_n$  in $L$ that contains ``enough'' products of at least $n$ elements of $L$ (for example, the ideal generated by all such products), so that the quotient $L/I_n$  is infinite and finitely generated as a module. Then it suffices to apply the results from Section \ref{s: module_finite_algebras}, together with transitivity of e-interpretations. This approach presents two challenges: 
\begin{enumerate}
\item $I_n$ can be difficult to e-define if $L$ is non-associative  (hence Definition \ref{d: rnb}).

\item  $L/I_n$ may be finite. For instance, if $L$ is unitary (i.e.\ it has an identity element) one cannot simply take $I_n$ to be the ideal generated by all products of $n$ elements of $L$, since then $I_n=L$, and $L/I_n = 0$ (hence the next definition).
\end{enumerate}

\begin{definition}\label{d: I_n}
Let $L$ be a $\L$-algebra, and let $T$ be a generating set of $L$. If $L$ is non-unitary then we let $I_n(T)$ or $I_n$ denote the $\L$-ideal generated by all  products of $n$ elements of $T$. 

If $L$ is unitary then we let $I_n(T)$ or $I_n$ denote the $\L$-ideal generated by all  products of $n$ elements of $T\setminus\{\lambda\cdot 1 \mid \lambda\in \Lambda\}$, where $1$ denotes the multiplicative identity of $L$. 
%
%
\end{definition}

Throughout the rest of the section, $L$ denotes a finitely generated $\L$-algebra, possibly  non-associative, non-commutative,  non-unitary, and not finitely generated as a $\L$-module. We fix a finite set $T=\{a_1, \dots, a_m\}$ as in Definition \ref{d: I_n}, and we define the ideals $I_n$ accordingly.

\begin{definition}\label{d: rnb}
Let  $L$ be a $\L$-algebra and let $T=\{a_1, a_2, \dots\}$ be a generating set of $L$. Then $L$ is called  \emph{left-normed-generated} with respect to $T$ if, for all $n\geq 1$,  $I_n(T)$ is generated as a $\L$-module by all left-normed products $\{(a_{i_1}(a_{i_2}(\dots(a_{i_{k-1}} a_{i_{k}}) \dots ))) \mid k\geq n, \ 1\leq i_1, \dots, i_k\}$.   
\end{definition}

Notice that any associative algebra is left-normed-generate with respect to any generating set.

\begin{lemma}\label{l: define_Rn}
Suppose that $L$ is a  left-normed-generated $\L$-algebra with respect to some finite generating set, and let $n\geq 1$. Then  $(L/I_n; \Lr)$ and $(L/I_n; \Lal)$ are e-interpretable in $(L; \Lr)$ and $(L; \Lal)$, respectively. Moreover, $L/I_n$  is  a $\L$-algebra which is finitely generated as a $\L$-module. 
\end{lemma}
\begin{proof}
Let $T=\{a_1, \dots, a_m\}$ be a finite generating set of $L$. Then each element of $I_n$ is a finite sum of elements of the form  \begin{equation}\label{e: define_In}\l (a_{i_1}(a_{i_2}( \dots (a_{i_{k-1}} a_{i_{k}})\dots))), \quad \l\in \L,\quad k\geq n.\end{equation}  Hence each element as in \eqref{e: define_In}  can be written as $(a_{i_1}(\dots(a_{i_{n-1}}y)\dots))$ for some $y\in L$  in the non-unitary case. Consequently, if $L$ is non-unitary, then $I_n$ is e-definable as a set in $(L;\Lr)$ by the equation
\begin{equation}\label{e: non_unitary}
x= \sum_{1\leq i_1, \dots, i_{n-1}\leq m} (a_{i_1}(\dots(a_{i_{n-1}} y_{i_1, \dots, i_{n-1} })\dots ))
\end{equation}
on variables $x$ and $\{y_{i_1, \dots, i_{n-1}}\}$ (the e-definition is in $(L; \Lr)$ because it makes no use of multiplication by scalars $\L$). Observe however that \eqref{e: non_unitary} would not work in the case that $L$ is unitary, since a solution to \eqref{e: non_unitary}  may yield $x \in I_{n-1}$, for example if each $y_{i_1, \dots, i_{n-1}}$ is a $\Lambda$-multiple of $1$. This can be solved  by, in this case, taking the equation 
\begin{equation}\label{e: unitary}
x= \sum_{1\leq i_1, \dots, i_{n}\leq m} (a_{i_1}(\dots(a_{i_{n-1}} (a_{i_n} y_{i_1, \dots, i_{n} }))\dots ))
\end{equation}
instead of \eqref{e: non_unitary}. 
Hence, in both cases $(L/I_n; \Lr)$ and $(L/I_n; \Lal)$ are  e-interpretable in $(L;\Lr)$ and $(L; \Lal)$ by Lemma \ref{factor_is_interpretable}.


Finally, note that $L/I_n$ is generated as a $\L$-module by the projection of all products of less than $n$ elements of $T$, together with $1+I_n$ if $L$ is unitary. It follows that $L/I_n$ is finitely generated as a $\L$-module.
\end{proof}

We now state the main result of this subsection. The ideals $I_n$ are defined with respect to any set $T$ satisfying the conditions of Definition \ref{d: I_n}. 

\begin{theorem}\label{t: finitely_generated_algebras}
Let $L$ be a   finitely generated  algebra (possibly  non-associative, non-commutative and non-unitary) over a finitely generated associative commutative unitary ring $\L$. Suppose  that $L$ is  left-normed-generated with respect to some finite generating set, and that $(L/I_{n})^2$ is infinite for some $n\geq 1$. Then there exists a  ring of  integers $O$ of a number field or a  global function field such that $(O;\Lr)$ is e-interpretable in $(L; \Lal)$, and $\mc{D}(O; \Lr)$ is Karp-reducible to $\mc{D}(L; \Lal)$. Moreover:
\begin{enumerate}

\item If $\L$ has positive characteristic $p$, then $(\mbb{F}_p[t];\Lr)$ is e-interpretable in $(L; \Lal)$, and $\mc{D}(L;\Lal)$ is undecidable.

\item  If $L$ is a ring (i.e.\ $\L=\mbb{Z}$) then $O$ is a ring of algebraic integers.
    
\end{enumerate}
If $\L$ is $\mbb Z$ or a finite field then all the above holds after replacing $(L;\Lal)$ by $(L;\Lr)$.
\end{theorem}
%
\begin{proof}
By Lemma \ref{l: define_Rn}, $L/I_n$ is a  $\L$-algebra, it is finitely generated as a $\L$-module, and it is e-interpretable as an algebra in $(L;\Lal)$. The same result states that $(L/I_n, \Lr)$ is e-interpretable as a ring in  $(L;\Lr)$. By hypothesis, we can take $n$ so that $(L/I_n)^2$ is infinite. Now the result  follows by Theorems \ref{t: main_thm_algebras} and \ref{non_commutative_associative_version} applied to $L/I_n$, and by transitivity of e-interpretations. 
\end{proof}

We next apply the previous theorem to the class of Lie algebras, which are popular examples of non-associative, non-commutative and non-unitary algebras. First, we prove that Lie algebras are left-normed-generated.

\begin{lemma}\label{l: Lie_algebra_is_right_normed_gen}
 Any countably generated Lie algebra  $L$ is left-normed-generated with respect to any countable generating set. 
\end{lemma}
\begin{proof}
Let $A=\{a_1, a_2, \dots\}$ be a generating set of $L$. In \cite{Right_normed_basis_for_free_Lie_algebras} it is proved that any free Lie algebra $F=F(b_1, b_2, \dots)$ generated by $B=\{b_1, b_2, \dots\}$ is freely generated as a module by a subset of the set $B=\{(b_{i_1}(b_{i_2}(\dots(b_{i_{k-1}} b_{i_{k}}) \dots ))) \mid k\geq 1, \ 1\leq i_1, \dots, i_k \}$.  In particular, it is left-normed-generated with respect to $B$. Let $\pi: F\to L$ be the natural projection of $F$ onto  $L$ sending $b_i$ to $a_i$ for all $i\geq 1$. We denote by $I_n^F$ and $I_n^L$ the ideals of $F$ and $L$, respectively, generated by all products of at least $n$ elements from $B$ and from $A$, respectively. Observe that  $\pi(I_n^F) = I_n^L$. 

For each $n\geq 1$, let $S_n$ be a subset of  $\{[b_{i_1},[b_{i_2},[\dots, [b_{i_{k-1}}, b_{i_{k}}] \dots ]]] \mid k\geq n, \ 1\leq i_1, \dots, i_k\}$ such that the ideal $I_n^F$ of $F$ is generated by all $\L$-multiples of $S_n$. Then $\pi(I_n^F) = I_n^L$ is generated by all $\L$-multiples of $\pi(S_n)$. Now $\pi(S_n)$ is a subset of   $$\{[\pi(b_{i_1}),[\pi(b_{i_2}),[\dots, [\pi(b_{i_{k-1}}), \pi(b_{i_{k}})] \dots ]]] \mid k\geq n, \ 1\leq i_1, \dots, i_k \},$$ hence $L$ is left-normed generated with respect to $\pi(B)=A$.
\end{proof}

The next two corollaries follow immediately from Theorem \ref{t: finitely_generated_algebras} and  Lemma \ref{l: Lie_algebra_is_right_normed_gen}. 
By $[L/I_n,L/I_n]$ we denote the $\L$-submodule of $L/I_n$ generated by   $\{[x,y]\mid x,y\in L/I_n\}$. 

\begin{corollary}\label{t: Lie}
Let $L$ be a  finitely generated  Lie $\L$-algebra. Assume that $[L/I_n,L/I_n]$ is infinite for some $n\geq 1$, and that $\L$ is finitely generated.  Then the conclusions of Theorem \ref{t: finitely_generated_algebras}  hold for $L$.
\end{corollary}

\begin{corollary}\label{c: free}
Let $F$ be a finitely generated  free associative $\L$-algebra (possibly non-commutative and non-unitary) or a free Lie algebra of rank at least $2$, with $\L$ finitely generated. Then the conclusions of Theorem \ref{t: finitely_generated_algebras} hold for $F$.
\end{corollary}
\begin{proof}
If $F$ is a free Lie algebra, let $I_n$ be taken with respect to any free generating set of $F$, $n\geq 1$. If $F$ is a free associative algebra freely generated by $\{1, a_1, \dots, a_m\}$, let $I_n$ be taken with respect to $\{a_1, \dots, a_m\}$, $n\geq 1$.  In both cases $(F/I_n)^2$ is infinite for all $n\geq 3$.  Therefore the result follows from Theorem \ref{t: finitely_generated_algebras} and the previous Corollary \ref{t: Lie}. 
\end{proof}

Corollary \ref{c: free} complements Romankov's \cite{Romankov_eqns_2}, and Kharlampovich and Miasnikov's  \cite{ KM_free_algebras, KM_free_Lie_algebras} papers on free algebras. In the first reference \cite{Romankov_eqns_2}, it is proved that $\mc{D}(F;\Lr)$ is undecidable for many types of free rings $F$. In particular, it is proved that the algebras of Corollary \ref{c: free} have undecidable Diophantine problem if $\L=\mbb Z$.  In  \cite{KM_free_algebras, KM_free_Lie_algebras} it is proved that $\mc{D}(F;\Lr)$ is undecidable if $\L$ is an arbitrary field, and $F$ is a free associative non-commutative unitary algebra, or a free Lie algebra of rank at least $3$.  Note that an infinite field is necessarily infinitely generated, hence our Corollary \ref{c: free}  does not intersect with \cite{KM_free_algebras, KM_free_Lie_algebras}. 

\subsection{Undecidability of first-order theories}

The \emph{first-order theory} $T$ (or \emph{elementary theory}) of a structure $M$ in a language $\mc{L}$  is the set of all first-order sentences in $\mc{L}$ that are true in $M$.  One says that $T$ is decidable if there exists an algorithm that, given a sentence $\phi$ in $\mc{L}$, determines if $\phi$ is true in $M$ or not, i.e.\ if $\phi$ belongs to $T$. If such an algorithm does not exist then $T$ is said to be undecidable.  

The first-order theory \emph{with constants} of $M$ in the language $\mc{L}$ is the set of first order sentences that are true in $M$, allowing the use of any constant element from $M$ in the sentences.

Noskov proved in \cite{Noskov_rings} that the first-order theory of an infinite finitely generated associative commutative unitary ring is undecidable in the language of rings with constants. In particular this is true for the ring of integers of any number field or  global function field. Hence using transitivity of e-interpretations and Proposition \ref{Diophantine_reduction}  we immediately obtain the following:
\begin{theorem}\label{t: first_order}
Let $L$ be a ring or an algebra satisfying the hypotheses of one of the  Theorems \ref{t: f_g_commutative_rings}, \ref{t: main_thm_algebras}, \ref{non_commutative_associative_version}, \ref{t: non_unit_algebras}, \ref{t: non_unit_commutative_ring},  \ref{t: finitely_generated_algebras},  or Corollaries \ref{t: Lie} and \ref{c: free}. Assume $L^2$ is infinite. Let $\mc{L}$ denote $\Lr$ if $L$ is a ring (this includes the case when $L$ is a $\mbb{Z}$-algebra) or if $L$ is an algebra over a field and it does not satisfy the hypothesis of Theorem \ref{t: non_unit_algebras}. Otherwise  let $\mc{L}=\Lal$.  Then the first-order theory of $L$ in the language $\mc{L}$ with constants is undecidable. 
%
%
%
\end{theorem}

\section{Appendix: finitely generated associative commutative unitary rings}\label{appendix}

In this section we prove Theorem \ref{t: f_g_commutative_rings}, which we restate  next. The definition of rank we use is given in Definition \ref{l: trank}.

\begin{theorem}\label{t: f_g_commutative_rings_appendix}

Let $R$ be an infinite finitely generated commutative ring with identity.
Then there exists a ring of integers $O$ of a number or a global function field such that $(O;\Lr)$ is e-interpretable in $(R;\Lr)$,  and $\mc{D}(O;\Lr)$ is polynomial-time Karp-reducible to $\mc{D}(R;\Lr)$. Moreover, one of the following holds:

\begin{enumerate}
    \item If $R$ has positive characteristic $p> 0$, then the following holds: $O$ is the ring of integers of a global function field; the ring of polynomials  $(\mbb{F}_p[t];\Lr)$ is e-interpretable in $(R;\Lr)$ for some variable $t$; and $\mc{D}(R;\Lr)$ is undecidable.
    \item  If  $R$ has zero characteristic and it has infinite rank then the same conclusions as above hold:   $O$ is the ring of integers of a global function field; the ring of polynomials  $(\mbb{F}_p[t];\Lr)$ is e-interpretable in $(R;\Lr)$ for some prime $p$ and variable $t$; and $\mc{D}(R;\Lr)$ is undecidable. 
    \item  If  $R$ has zero characteristic and it has finite rank $n$, then $O$ is a ring of algebraic integers, and $\mc{D}(R;\Lr)$ is undecidable provided that $\mc{D}(O;\Lr)$ is undecidable. Additionally, $K$ is a field extension of $\mbb{Q}$ of degree at most $n$. 
\end{enumerate}
\end{theorem}

As we mentioned, this result is almost identical to Theorem 7.1 in Eisentraeger thesis \cite{phd_eisentrager} (see Theorem \ref{t: phd_eisentrager} in this paper for the statement in \cite{phd_eisentrager}). Since we are interested in having an e-interpretation of a ring of integers $O$ in $R$, rather than just a reduction of the Diophantine problems, and for completeness, we provide a proof in this appendix.

We postpone the proof for later in this section. Next we introduce some intermediate notions and results that we will need.  

\begin{proposition}\label{c: integral_closure_e_interpretable}
Let $R$ be a finitely generated integral domain whose field of fractions $K$ is a number or a global function field. In the case that $K$ is a global function field of characteristic $p$, assume that $R$ contains $\mbb{F}_p[t]$.   
Then the ring of integers $O$ of $K$ is e-interpretable in $R$. Furthermore, if $K$ has positive characteristic $p$, then  $\mbb{F}_p[t]$ is e-interpretable in $R$, and the Diophantine problem $\mc{D}(R)$ is undecidable.
\end{proposition}

  We proceed to provide a proof. First, we review some necessary notions and results.   Proposition \ref{c: integral_closure_e_interpretable}  is essentially a restatement of some of the results from \cite{Shla_book}. There, instead of e-interpretability, the notion of Dioph-generation is used: 

\begin{definition}[Definition 2.1.5 \cite{Shla_book}]\label{d: dioph_gen}
Let $R_1$ and $R_2$ be two integral domains with fields of fractions $F_1$ and $F_2$, respectively. Assume that neither $F_1$ nor $F_2$ is algebraically closed. Let $F$ be a finite extension of $F_2$ such that $F_1\subseteq F$. Further, assume that for some integers $k$ and $m$ there exists a base $\{\omega_1, \dots, \omega_k\}$ of $F$ over $F_2$ and a polynomial $f(a_1, \dots, a_k, b, x_1, \dots, x_m)$ with coefficients in $R_2$ such that  $f(a_1, \dots, a_k, b, x_1, \dots, x_m)=0$ implies that $b\neq 0$, and
\begin{align*}
R_1 = \{&\sum_{i=1}^k t_i \omega_i\mid \exists  a_1, \dots, a_k, b, x_1, \dots, x_m\in R_2,\\ &bt_1=a_1, \dots, bt_k=a_k,  f(a_1, \dots, a_k, b, x_1, \dots, x_m)=0   \}.
\end{align*}
Then we say that $R_1$ is \emph{Dioph-generated} over $R_2$.
\end{definition}

The corresponding statement is the following:

\begin{theorem}[\cite{Shla_book}]\label{c: integral_closure_e_interpretable_dioph_gen}
%
Let $R$ be a finitely generated integral domain whose field of fractions $K$ is a number or a global function field. In the case that $K$ is a global function field of characteristic $p$, assume that $R$ contains $\mbb{F}_p[t]$.   
Then the ring of integers  $O$ of $K$ is Dioph-generated over $R$. Furthermore, if $R$ has positive characteristic $p$, then  $\mbb{F}_p[t]$ is Dioph-generated over $R$, and the Diophantine problem $\mc{D}(R)$ is undecidable.
\end{theorem}

Next we use the above Theorem \ref{c: integral_closure_e_interpretable_dioph_gen} in order to prove Proposition \ref{c: integral_closure_e_interpretable_dioph_gen}. It suffices to prove a suitable equivalence between the notions of e-interpretability and of Dioph-generation.

The next definition will be used only in an auxiliary manner in the next Lemma \ref{l: dioph_gen_ans_e_interpretation}.

\begin{definition}[Definition 2.1.1 \cite{Shla_book}]\label{d: field_dioph}
Let $R$ be an integral domain with field of fractions $F$. Let $k,m$ be positive integers and let $A\subseteq F^k$ be some subset of $F^k$. Assume further that there exists a polynomial $f(a_1, \dots, a_k, b, x_1, \dots, x_m)$ with coefficients in $R$ such that, for all $a_1, \dots, a_k, b, x_1, \dots, x_m\in R$, we have $f(a_1, \dots, a_k, b, x_1, \dots, x_m)= 0 \Rightarrow b\neq 0$, and 
\begin{align*}
A=\{ &(t_1, \dots, t_k)\in F^k\mid \exists  a_1, \dots, a_k, b, x_1, \dots, x_m\in R,\\ &bt_1=a_1, \dots, bt_k = a_k, f(a_1, \dots, a_k, b, x_1, \dots, x_m)=0\}.
\end{align*}
Then $A$ is said to be \emph{field-Diophantine} over $R$.
\end{definition}

Next we provide a condition for  Dioph-generation to imply e-definability.

\begin{lemma}\label{l: dioph_gen_ans_e_interpretation}
 Let $R_1, R_2$ be integral domains with $R_1 \subseteq R_2$. Then $R_1$ is Dioph-generated over $R_2$ if and only if $R_1$ admits a $1$-dimensional e-definition in $R_2$. 
\end{lemma}
\begin{proof}
Suppose  $R_1$ is Dioph-generated over $R_2$. By Corollary 2.1.10 in \cite{Shla_book}, $R_1$ is field-Diophantine over $R_2$.  Now by Lemma 2.1.2 in \cite{Shla_book} (taking $A$ to be $R_1$, $R$ to be $R_2$, and $k=1$) we have that $R_1$ is e-definable
in $R_2$  (note that in \cite{Shla_book} an e-definition  is called \emph{Diophantine definition}, see 1.2.1 \cite{Shla_book}).  Moreover, from the proof  of Lemma 2.1.2 in \cite{Shla_book} we see immediately that the e-definition is $1$-dimensional. 

Now assume $R_1$ admits a 1-dimensional e-definition in $R_2$. Then again by Lemma 2.1.2 in \cite{Shla_book}  we have that $R_1$ is field-Diophantine over $R_2$. Moreover, the proof of this lemma shows that  $R_1$ is field-Diophantine over $R_2$ taking $k=1$ in the notation of Definition \ref{d: field_dioph} (and taking $A$ to be $R_1$, and  $R$ to be $R_2$).  We now claim that $R_1$ is Dioph-generated over $R_2$. Indeed, it suffices to take, following the notation in Definition \ref{d: dioph_gen}, $F=F_2$, the basis $\{1\}$ of $F$ over $F_2$, and the polynomial $f$ from the field-Diophantine definition of $R_1$ in $R_2$.
\end{proof}

\begin{proof}[Proof of Proposition \ref{c: integral_closure_e_interpretable}]
It follows immediately from Theorem \ref{c: integral_closure_e_interpretable_dioph_gen}, Lemma \ref{l: dioph_gen_ans_e_interpretation}, and Remark \ref{r: contained_e_def_implies_e_interp}.
\end{proof}

We will need the following observation:

\begin{remark}\label{r: rank_properties}
Let $R$ be a countable commutative ring of finite rank and positive characteristic $k$. Then $R$ is finite: indeed, this follows from one of Prüfer theorems, as in this case $R$ is a bounded abelian group since $kR=0$  (see Theorem 5.2 in \cite{Fuchs}).

Furthermore, if $1 \to R_1 \to R_2 \to R_3 \to 1$ is a short exact sequence of rings, then the rank of $R_2$ is at least the rank of $R_1$, and at most the rank of $R_1$ plus the rank of $R_3$ (see Exercise 3, Chapter 3.4 in \cite{Fuchs}). Finally,  if $A$ is a finitely generated $R$-module, and $R$ has finite rank, then $A$ as an abelian group also has finite rank (this follows from the fact that an abelian group $B$ has rank $k$ if and only if $k$ is the largest integer such that $B$ contains a subgroup $B_0$ which is the direct sum of $k$ cyclic groups, and for all $b \in B$ there exists an integer $n\neq 0$ such that $nb\in B_0$). 
\end{remark}

We are ready to prove  Theorem \ref{t: f_g_commutative_rings_appendix}.

\begin{proof}[Proof of Theorem \ref{t: f_g_commutative_rings_appendix}]

Throughout the proof we will  use the facts that e-interpretability is transitive, by Proposition \ref{interpretation_transitivity}; and that the quotient by any ideal of a Noetherian ring $R$ is e-interpretable in $R$,  by Lemma \ref{interpretations_in_rings}. More precisely, we  successively replace $R$ by  appropriate quotients of $R$ until obtaining an infinite finitely generated subring $R'$ of a number or a global function field $K$. We then use Proposition \ref{c: integral_closure_e_interpretable} from the previous section, and obtain first an e-interpretation of $O_K$  in $R'$ for some number or global function field $K$, and then an e-interpretation in $R$ by the aforementioned transitivity property and  Lemma \ref{interpretations_in_rings}. Moreover, since $R'$ is a quotient of $R$, Items 1 and 3 of the statement follow rather quickly. Item 2, the case when $R$ has infinite rank and zero characteristic, requires an extra intermediate  step where a suitable quotient of the form $R/pR$ is found, for some prime $p$.

\noindent\emph{Step 1: Reduction to integral domains.} Let $R$ be a finitely generated infinite commutative ring. Suppose first that $R$ is not an integral domain. We will find a quotient of $R$ which is an infinite finitely generated integral domain and which is e-interpretable in $R$. Let $N=\{x \in R\mid x^m = 0 \ \text{for some} \ m\in \mbb{N}\}$ be the nilradical of $R$, i.e.\ the ideal formed by all nilpotent elements of $R$. Equivalently, $N$ is the intersection of all minimal prime nonzero ideals of $R$. There are finitely many such ideals $q_1, \dots, q_n$ in a Noetherian ring  (see Theorem 87 of \cite{Kaplanski}), hence $N=q_1\cap \dots \cap q_n$. We claim that   $n\geq 1$. Indeed,  $R$ contains at least one nonzero maximal ideal, since otherwise $R$ would be a finitely generated ring which is a field, and so  $R$ would be finite (see Exercise 6 in Chapter 7 of \cite{Atiyah}). Since maximal ideals are prime, we have $n\geq 1$.    

We now claim that there exists $i$ such that $R/q_i$ is   infinite. We also claim that if $R$ has infinite rank, then there exists $i$ such that $R/q_i$ has infinite rank (in particular,  $R/q_i$ is infinite by Remark \ref{r: rank_properties}).  Indeed, note first that $R/N$ admits an embedding into the direct sum  $R/q_1 \oplus \dots \oplus R/q_n$ via the well-defined map $r + N \mapsto (r
+q_1) \oplus \dots \oplus (r+q_n)$. Hence, if all $R/q_i$ are finite, then $R/N$ is finite. If all $R/q_i$ have finite rank, then also $R/N$ has finite rank by Remark \ref{r: rank_properties}.

Now, the $R/N$-module $N^i/N^{i+1}$ is finitely generated as a $R/N$-module (since it is finitely generated as a ring) for all $i=1,\dots, n-1$. It follows that if $R/N$ is finite then $N^i/N^{i+1}$ is also finite. The same is true for the rank: if $R/N$ has finite rank, then $N^i/N^{i+1}$ also has finite rank by Remark \ref{r: rank_properties}.  Since   $N^m=0$ for some $m$, it follows that if $R/N$ is finite then also $R$ is  finite, a contradiction. Similarly, it follows that if $R/N$ has finite rank then so does $R$, by Remark \ref{r: rank_properties}.  Hence the claim is proved.  

Note that if $R$ has zero characteristic and finite rank then $R/q_i$ has also zero characteristic, since otherwise it would be finite.

In views of the previous arguments, we can assume from now on that $R$ is an infinite finitely generated integral domain.  Note that, by Remark \ref{r: trank_domains}, now the notion of rank is the same as vector space dimension (for positive characteristic) and of $\mbb{Z}$-module dimension (for zero characteristic). 

\medskip

\noindent\emph{Step 2: The case of infinite rank and zero characteristic.} 
We now assume  that $R$ is a finitely generated  integral domain of zero characteristic and infinite rank. We will e-interpret in $R$ an infinite finitely generated integral domain of positive characteristic. This will allow us to assume, in the next steps of the proof, that $R$ either has finite rank and zero characteristic, or infinite rank and positive characteristic. In particular, this reduces the hypothesis of Item 2 in the statement of the theorem to the hypothesis of Item 1.

Note that for every prime integer $p$,  $R/pR$ is e-interpretable in $R$ by Lemma \ref{interpretations_in_rings}. 
Hence this step will be complete once we prove that there exists a prime $p$ such that $R/pR$ is  infinite.

  We first claim that $R$ must contain  a transcendental element over $\mbb{Q}$ (we identify $R$ with its embedding in its field of fractions, which is a field extension of $\mbb{Q}$). Indeed, assume not, so that every element of $R$ is a root of a polynomial with integer coefficients. In particular, each element in a finite generating set of $R$, say $r_1, \dots, r_\ell$, is the root of some  polynomial in $\mbb{Z}[x]$. Since $R$ is  generated as a ring by $r_1,\ldots,r_\ell$, the field of fractions of $R$ is generated as a field by  $r_1,\ldots,r_\ell$. Since all $r_i$ are algebraic over $\mbb{Q}$ ($i=1, \dots, \ell$), the field of fractions of $R$ is a finite field extension of $\mbb{Q}$, i.e.\ it is a finite-dimensional $\mbb{Q}$-vector space. It follows that  $R$ has finitely many $\mbb{Z}$-linearly independent elements, and so $R$ has finite rank, a contradiction. The claim is proved.
  
  Now pick a transcendental element $x\in R$. Then the subring of $R$  generated by $1$ and $x$ is isomorphic to  $\mbb{Z}[x]$. We identify this subring with $\mbb{Z}[x]$. Given  an integer prime $q$ let $\phi_q:R\to R/qR$ be the natural quotient map. We will show that $\phi_q(\mbb{Z}[x])$ is infinite for some $q$. This will imply immediately that $R/qR$ is infinite as well, and hence this step will be complete. 

Let $\phi_q|_{\mbb{Z}[x]}$ be the restriction of $\phi_q$ on $\mbb{Z}[x]$. We will find a prime $q$ such that $\ker(\phi_q|_{\mbb{Z}[x]}) = q\mbb{Z}[x]$, from where it follows that $\phi_q(\mbb{Z}[x]) \cong  \mbb{F}_q[x]$ is infinite. 

Indeed, first note that $\ker(\phi_q|_{\mbb{Z}[x]}) = qR \cap \mbb{Z}[x]$.  Define $A=\{r\in R\mid nr\in \mbb{Z}[x]\text{ for some }n\in\mbb{N}\setminus\{0\}\}$. We have $\ker(\phi_q|_{\mbb{Z}[x]}) = qR \cap \mbb{Z}[x] = qA\cap\mbb{Z}[x]$. If $A=\mbb{Z}[x]$ it follows that $\ker(\phi_q|_{\mbb{Z}[x]})=q\mbb{Z}[x]$ as required. Hence assume $\mbb{Z}[x]\subsetneq A$.  Note that $A$ is a finitely generated subring of $R$ and that it can be identified with a subring of $\mbb{Q}[x]$. Hence any element of $\mbb{Q}[x]$ can be written as $p(x)/n$ for some $p(x)\in\mbb{Z}[x]$ and some $n\in \mbb{N}\setminus\{0\}$. Let $a_1, \dots, a_k$ be a finite generating set of $A$ as a ring, and let $p_1(x), \dots, p_k(x)$ and $n_1, \dots, n_k$ be polynomials from $\mbb{Z}[x]$ and non-zero integers, respectively, such that   $a_i =p_i(x)/n_i$ for all $i=1, \dots, k$. Since $\mbb{Z}[x]\subsetneq A$, at least one of the $n_i$ is larger than $1$. Let $q$ be a prime integer that is coprime with all $n_i$. It follows that every element of $A$ can be written in the form $p(x)/n$ where $n\geq 1$ is coprime with $q$, and $p(x)\in \mbb{Z}[x]$. Then any element from $qA$ which belongs to $\mbb{Z}[x]$ must belong to $q\mbb{Z}[x]$. Hence  $\ker(\phi_q|_{\mbb{Z}[x]}) = qA\cap \mbb{Z}[x]=q\mbb{Z}[x]$, as required.

\medskip

\noindent\emph{Step 3: Reduction to Krull dimension 1.} From now on we assume that either the hypothesis of Item 1 or of Item 3 of the statement of the theorem hold. Hence $R$ is an infinite fintely generated integral domain either of finite rank and zero characteristic, or of infinite rank and positive characteristic. The Krull dimension  of $R$ is the largest integer $k$ for which there exists a proper ascending chain of prime ideals $p_0<p_1<\ldots<p_k<R$. Such $k$ is finite under our assumptions  (see Section 8.2.1 of \cite{Eisenbud}).  It is not possible that $k= 0$, since in this case $R$ would be a finitely generated Artinian domain (see Proposition 9.1 in \cite{Eisenbud}), and thus a finitely generated field (see Proposition 8.30 of \cite{Clark}), a contradiction because, as referred to earlier, a finitely generated ring which is a field is necessarily finite.  Hence $k\geq 1$.  We may assume that $k=1$, since if $k\geq 2$ then   $R/p_{k-1}$ is a finitely generated integral domain, e-interpretable in $R$, and of Krull dimension $1$. The latter implies that $R/p_{k-1}$ is infinite. This implies that $R/p_{k-1}$  has finite rank and zero characteristic, or infinite rank and positive characteristic, depending on which of these two properties $R$ satisfies, respectively. 

\medskip

\noindent \emph{Step 4: Reduction to a subring of a number or a global function field.}
Assume $R$ is a finitely generated infinite integral domain of Krull dimension $1$, either of infinite rank and positive characteristic, or of finite rank and zero characteristic.
We  claim that   one of the following hold:
\begin{enumerate}
\item $R$ is a subring of  a number field (if $R$ has zero characteristic). This is proved in 2.2 of \cite{Noskov_rings}.
\item There exists a prime integer $p$ and a transcendental element $t\in R$ over $\mbb{F}_p$ such that $\mbb{F}_p[t]\subseteq R$  and $R$ is integral over  $\mbb{F}_p[t]$. It follows that $R$ is a subring of a finite field extension $K$ of $\mbb{F}_p(t)$, with $\mbb{F}_p[t] \subseteq R$. In particular, $R$ has positive characteristic.

This follows  from the Noether normalization lemma (Theorem A1 of Chapter 8.2 in \cite{Eisenbud}), which states  that any finitely generated $k$-algebra is a finitely generated module over $k[y_1, \dots, y_d]$, where $k$ is any field and $d$ is the Krull dimension of the algebra. Hence in our case $R$ is a finitely generated $\mbb{F}_p[t]$-module, and so it is integral over $\mbb{F}_p[t]$.

\end{enumerate}

\medskip

\noindent\emph{Step 5: Reduction to rings of integers.}  Assume $R$ satisfies Item 1 or Item 2 of the previous step. Then the field of fractions $K$ of $R$ is a number or  a global function field. Since $R$ is finitely generated,   Proposition \ref{c: integral_closure_e_interpretable} implies that the ring of integers $O_K$ of $K$ is e-interpretable in $R$ (note that Item 2 above grants us the requirement that $R$ contains $\mbb{F}_p[t]$). By transitivity (Proposition \ref{Diophantine_reduction}), $O_K$ is e-interpretable in $R$, and therefore $\mc{D}(O_K)$ is  Karp-reducible to $\mc{D}(R)$ (Proposition \ref{Diophantine_reduction}).

If $R$ has finite rank $n$, then it has zero characteristic, because it is infinite.  Hence, $R$ is a subring of a number field, and $O_K$ is a ring of algebraic integers. Moreover,  since $R$ as a $\mbb{Z}$-module has dimension $n$, we have that $K$ is an $n$-dimensional $\mbb{Q}$-vector space, i.e.\ $K$ is field extension of $\mbb{Q}$ of degree $n$. 

If  $R$ has characteristic $p>0$, then  Proposition \ref{c: integral_closure_e_interpretable}, and transitivity of e-interpretations and reduction of Diophantine problems (Propositions \ref{interpretation_transitivity} and \ref{Diophantine_reduction}) yield that $\mbb{F}_p[t]$ is e-interpretable in $R$, and that $\mc{D}(R)$ is undecidable.

\medskip

\emph{Step 6: Conclusion.}  Let $R$ be the ring given initially in the statement of the theorem, and let $O_K$ be the ring of integers obtained in the previous Step 5. As discussed at the beginning of the proof,  $O_K$ is e-interpretable in $R$ and $\mc{D}(O_K)$ is Karp-reducible to $R$. Moreover, we have, following each one of the previous Steps 1 through 5, that each one of Items 1, 2, and 3 in the statement hold: indeed, Item 2 reduces to Item 1,  and in the rest of cases the  fact that $R$ has zero or positive characteristic does not change throughout all steps. Hence  the   Items 1 and 3 hold due to Step 5.
\end{proof}

\bibliographystyle{plain} 
\bibliography{bib.bib}

\end{document}